\begin{document}

\title{Multiresolution-analysis for stochastic hyperbolic conservation laws}
\titlerunning{Multiresolution-analysis for stochastic hyperbolic conservation laws}

\author{M.~Herty \and A.~Kolb \and S.~M\"uller
}

\institute{M. Herty \at
  Institut f\"ur Geometrie und Praktische Mathematik, RWTH Aachen University, Templergraben 55, D-52056 Aachen, Germany \\
  \email{herty@igpm.rwth-aachen.de}           
  \and
  A. Kolb
  \at
  Institut f\"ur Geometrie und Praktische Mathematik, RWTH Aachen University, Templergraben 55, D-52056 Aachen, Germany \\
  \email{kolb@eddy.rwth-aachen.de}           
  \and
  S. M\"uller \at
  Institut f\"ur Geometrie und Praktische Mathematik, RWTH Aachen University, Templergraben 55, D-52056 Aachen, Germany \\
  \email{mueller@igpm.rwth-aachen.de}           
}


\maketitle   

\begin{abstract}
  A multiresolution analysis for solving stochastic conservation laws is proposed. Using a novel adaptation strategy and a higher dimensional deterministic problem, a discontinuous Galerkin (DG) solver is derived. A multiresolution analysis of the DG spaces for the proposed adaptation strategy is presented. Numerical results show that in the case of general stochastic distributions the performance of the DG solver is significantly improved by the novel adaptive strategy. The gain in efficiency is validated in computational experiments.
\end{abstract}

\keywords{Hyperbolic conservation laws \and uncertainty quantification \and discontinuous Galerkin methods \and multiresolution analysis}

\subclass{65M50, 35L65, 65N30, 65M70}


\section{Introduction}
\label{sec:intro}

In the past decades accurate and stable schemes for hyperbolic systems of conservation laws have been subject to intensive research. In many applications uncertainties have to be taken into account and thereby changing the deterministic problem to a higher-dimensional stochastic problem. Those uncertainties are usually modeled as random variables leading to stochastic hyperbolic conservation laws.
\par
Several approaches have been proposed in the past to deal with stochastic PDEs both from an analytical and numerical perspective. A broad classification distinguishes non-intrusive and intrusive methods. Among the non-intrusive methods, the Monte Carlo method and its variants are sampling-based methods. In the context of hyperbolic equations, they are used, for example, in \cite{Mishra2016,Mishra2012,Badwaik2021}. Another class of non-intrusive methods is based on stochastic collocation \cite{Xiu2005,Giesselmann2020,Wan2006,Sullivan2016,Nordstroem2015}, where the stochastic moments are obtained by applying adapted numerical quadrature. An intrusive approach on the contrary uses the representation of stochastic perturbations by  a series of orthogonal functions, known as generalized polynomial chaos (or Karhunen-Lo\`eve) expansions \cite{Cameron1947,Xiu2002}. Those expansions are substituted in the governing hyperbolic equations and projected on a lower-dimensional subspace. This leads to deterministic evolution equations for the coefficients of the series expansion.  In particular, in the context of partial differential equations this has been applied successfully to a large class of problems \cite{Ghanem1991,Cameron1947,Gottlieb2008,Hu2016,Pulch2011,Zanella2020}.
In the context of hyperbolic problems there have been contributions leading to a deterministic system that might encounter a loss of hyperbolicity.
Besides the theoretical obstacles of the intrusive approaches, several contributions towards numerical schemes and their convergence analysis have been proposed and we refer to \cite{Mishra2016,Mishra2012,Giesselmann2020,Oeffner2018,Nordstroem2005,Duerrwaechter2018} for further references.
\par
Typically, the computation of stochastic moments like expectation or variance for instance using a classical Monte Carlo method is very time-consuming due to low convergence rates. In the context of conservation laws with discontinuities in space, the convergence behavior has been observed to deteriorate because discontinuities may also be present in the stochastic directions \cite{Abgrall2017,Barth2013}. To handle these discontinuities in the stochastic directions, several approaches using decomposition of the random space have been developed \cite{Schlachter2020,Giesselmann2020,Wan2006}.

The objective of the present work is to overcome computational drawbacks of the interplay between spatial and the stochastic dynamics, e.g. using suitable grid adaptation. For this purpose, we rewrite the stochastic problem as deterministic conservation law in higher dimensions. The stochastic variables are then treated as additional (spatial-like) variables. We prove that the solution to the weak formulation is a solution to a stochastic hyperbolic conservation law. The deterministic approach allows to investigate the interplay between the dynamics of the spatial and stochastic dimensions. The key part will be the introduction of a novel adaptation strategy that allows to handle the increased dimensions of the problem efficiently and also exploits the particularities of the stochastic variables. We approximate the solution of the deterministic problem by a discontinuous Galerkin (DG) scheme. The DG solver is combined with local grid refinement that allows for adaptation in both the spatial and the stochastic directions.
Besides \textit{local error estimators}
cf.~\cite{Bey-Oden:96,Adjerid-Devine-Flaherty-Krivodonova:02,Hartmann-Houston:02a,Hartmann-Houston:02b,Houston-Senior-Sueli:02,Dedner-Makridakis-Ohlberger:07,Wang-Mavriplis:09,Giesselmann:2015}, which are not reliable and efficient
because the error is bounded only from above by the norm of the residual,
and \emph{sensor-based} methods, cf.~\cite{Pongsanguansin:2012,Remacle:2006,Hu2013,Remacle-Flaherty-Shephard:03,Arvanitis:2010}, which do not provide any error control,
another option to control local grid refinement is based on perturbation arguments. Here, the idea is to consider the discretization on an adaptive grid as a perturbation of a discretization on a uniform grid. We follow the latter since we then control the grid adaptation such that the asymptotic behavior of the uniform discretization error is maintained,  cf.~\cite{Harten:1995zr,GottschlichMueller:98,Bramkamp-Lamby-Mueller:05,Calle2005Wavelets-and-Ad,HovhannisyanMuellerSchaefer-2014}. This paradigm allows control of the perturbation error between reference and adaptive scheme and it is achieved by a multiresolution analysis (MRA).
In the context of perturbation methods  the term \textit{efficiency} is  interpreted as the reduction of the computational cost (memory and CPU) in comparison to the cost of a fully refined reference scheme. The term \textit{reliability} is used in the sense of the capability of the adaptation process to maintain the accuracy of the reference scheme.
Here, we propose  a novel suitable threshold procedure for the MRA of the approximate solution of the deterministic problem such that the perturbation error in the averaged stochastic quantities can be  controlled. In particular, the MRA is designed such that the threshold procedure can be performed efficiently in the adaptive scheme.
In Theorems \ref{thm:thres-error-1} and \ref{thm:thres-error-1-moments} we verify that perturbing the approximate solution depending on both spatial and stochastic variables  by applying this threshold procedure the perturbation in the corresponding  stochastic moments is uniformly bounded by the perturbation error in the approximate solution.
On the one hand, this can be considered a stability result for the stochastic moments. On the other hand, it provides us with an idea which local information in the deterministic solution is relevant for the stochastic moments. This is the key to improve compression rates and, thus, leads to a better  performance of the adaptive scheme.
The new threshold procedure is incorporated in a  multiresolution-based adaptive DG scheme  that is verified numerically to provide a reliable and efficient approximation of the stochastic moments, although the solution of the deterministic problem might be locally not reliable, i.e., the adaptation process is goal-oriented rather than solution-oriented.

The outline of the current work is thus as follows.
In Sect.~\ref{sec:problem} we introduce the scalar stochastic Cauchy problem and its deterministic reformulation.
In particular, we prove that the stochastic problem and the deterministic problem are equivalent.
The deterministic formulation allows the computation of the moments of the stochastic problem in a more explicit way rather than by Monte-Carlo methods.
Then we introduce in Sect.~\ref{sec:MRA-DG} the MRA on DG spaces where we first consider the general concept.
Since in the deterministic approach we distinguish directions in spatial and stochastic variables, we construct a MRA for suitable products of DG spaces.
This is tailored to ensure efficiency of the resulting adaptive DG scheme.
In Sect.~\ref{sec:MRA-moments} we analyze the influence of the threshold error on the computation of moments of the solution with respect to the stochastic variables and develop a new refinement strategy.
Finally, in Sect.~\ref{sec:NumRes} this strategy is incorporated into a multiresolution-based adaptive DG scheme. Its efficiency is numerically verified where we consider random Burgers' equation and random Euler equations.

\section{The scalar stochastic Cauchy problem and its reformulation}
\label{sec:problem}
To investigate the interaction of the spatial scales with scales in the stochastic moments we rewrite the stochastic problem as a deterministic problem in higher dimensions. For this purpose, we first introduce in Sect.~\ref{subsec:stoch_cp} the scalar stochastic Cauchy problem and the definitions of the stochastic moments Assuming that the random variables are absolutely continuous we then introduce a corresponding higher-dimensional deterministic problem.

\subsection{The scalar stochastic Cauchy problem}
\label{subsec:stoch_cp}
To define scalar conservation laws with uncertain initial data we first introduce the probability space
$(\Omega, \mathcal F, \Prob)$ with $\Omega$  a non-empty set, $\mathcal F$ a $\sigma$-algebra over $\Omega$ and $\Prob$ a probability measure on $\mathcal F$. Let be $\xi:\Omega\rightarrow\Omega_\xi$ a random variable on $(\Omega, \mathcal F, \Prob)$ and let be $\mathcal F_\xi :=\mathcal B(\Omega_\xi)$ the Borel $\sigma$-algebra over $\Omega_\xi:=\R^m$. For $B\in\mathcal B(\R^m)$ we define the probability distribution of $\xi$ by $\Prob_\xi(B) \equiv \Prob(\xi^{-1}(B)):=\Prob(\{\omega \in \Omega: \xi(\omega)\in B\})$ on $(\R^m,\mathcal B(\R^m))$.

In contrast to \cite{Mishra2012}, we assume that the probability distribution of $\xi$ is an absolutely continuous random variable with respect to the Lebesgue measure. Then, due to \cite[Theorem 17.10]{Bauer2001}, there exists an essentially bounded probability density $p_{\xi}:{\R^{m}}\rightarrow[0,\infty)$ such that $\Prob_\xi(B) = \int_B p_\xi(\bxi)\diff\bxi$ for all $B\in \mathcal{B}(\R^m)$. Furthermore, the expectation for $u\in L^1(\R^m)$ is
\begin{align}
  \label{eq:expectation_value}
  \E[u](\xi) := \int_\Omega u(\xi(\omega))\dif\Prob(\omega)
  = \int_{\R^m} u(\bxi)\, p_\xi(\bxi) \dif\bxi
\end{align}
and its $k$-th centralized moments are
\begin{align}
  \label{eq:centralized_moments}
  \ckmoments{k}[u](\xi) := \E\left[(u - \E\left[u\right] )^k\right](\xi),\quad k\in\N.
\end{align}
The stochastic Cauchy problem for scalar conservation laws reads
\begin{subequations}
  \label{eq:stoch_problem}
  \begin{alignat}{2}
    \label{eq:scl}
     & \bar u_t(t,\bx;\omega_\xi) + \sum_{j = 1}^{d} \pardev{\bx_j} \bof_j(\bar u(t,\bx;\omega_\xi))  = 0, &                                                       & \quad \bx \in {\R^{d}},\ {\omega_\xi} \in \Omega_\xi,\ t \in (0,T) \\
    \label{eq:scl_ic}
     & \bar u(0,\bx;\omega_\xi) = \bar u_0(\bx;\omega_\xi),
     &                                                                                                             & \quad \bx \in {\R^{d}},\ {\omega_\xi} \in \Omega_\xi.
  \end{alignat}
\end{subequations}
Here, $\bar u(t,\bx;\omega_\xi)\in\R$ is the conserved variable, $\bof \in C^1(\R,{\R^{d}})$ is the flux field and $T \in (0,\infty)$ is the terminal time.
Uncertainty enters the problem in the initial condition \eqref{eq:scl_ic}. As in \cite{Mishra2012}, we assume that the initial condition \eqref{eq:scl_ic} is given by $\bar u_0 \in \Lk{1}{{\R^{d}}}$-valued random variable.

\begin{definition}[\cite{Mishra2012}, Definition 3.2]
  \label{def:random_entropy_solution}
  A random field
  $\bar u:\Omega_\xi\rightarrow C_b([0,T], L^1(\R^d))$
  with initial $L^1(\R^d)$-valued random variable ${\bar u}_0$ is said to be a random entropy solution if it satisfies the following two conditions:
  \begin{enumerate}
    \item Weak solution: For $\Prob_\xi$-a.s. ${\omega_\xi}\in\Omega_\xi$, $\bar u(\cdot,\cdot;{\omega_\xi})$ satisfies the weak formulation
          \begin{align}
            \label{eq:stochastic_weak_formulation}
            \begin{split}
              \int_0^\infty\int_{{\R^{d}}} & \left(\bar u(t,\bx;\omega_\xi) \bar \varphi_t(t,\bx) + \sum_{j=1}^{d} \bof_j(\bar u(t,\bx;\omega_\xi))\pardev{\bx_j}\bar \varphi(t,\bx)\right)\dif \bx\dif t \\
              & + \int_{{\R^{d}}}\bar{u}_0(\bx;\omega_\xi)\bar \varphi(0,\bx)\dif \bx = 0
            \end{split}
          \end{align}
          for all test functions $\bar \varphi \in C_0^1([0,T]\times{\R^{d}})$.
    \item Entropy condition: Let $(\eta,\bQ)$ be an entropy-entropy flux pair, i.e., $\eta:\R\rightarrow\R$ is a convex function and $\bQ:\R\rightarrow\R^d$ with $\bQ'_j(\bar u) = \eta'(\bar u)\bof'_j(\bar u),\  j=1,\dots,{d}$. For $\Prob_\xi$-a.s. ${\omega_\xi}\in\Omega_\xi$, $\bar u(\cdot,\cdot,\omega_\xi)$ satisfies the inequality
          \begin{align}
            \label{eq:stoachastic_entropy_condition}
            \begin{split}
              \int_0^\infty\int_{{\R^{d}}} & \left(\eta(\bar u(t,\bx;\omega_\xi))\bar \varphi_t(t,\bx) + \sum_{j=1}^{d}\bQ_j(\bar u(t,\bx;\omega_\xi))\pardev{\bx_j}\bar \varphi(t,\bx)\right)\dif \bx\dif t \\
              & + \int_{\R^d} \eta(\bar u_0(\bx;\omega_\xi))\bar\varphi(0,\bx)\dif \bx \geq 0
            \end{split}
          \end{align}
          for all test functions $\bar \varphi\in C^1_0([0,T]\times{\R^{d}})$ with $\bar \varphi \geq 0$.
  \end{enumerate}
\end{definition}
In \cite{Mishra2012} it is proven that there exists a unique random entropy solution for a general probability space $(\Omega,\mathcal F,\Prob)$, if the entropy solution exists for $\Prob$-a.s. $\omega\in\Omega$.
\begin{theorem}[\cite{Mishra2012}, Theorem 3.3]
  \label{thm:stochastic_entropy_solution}
  Consider the stochastic Cauchy problem \eqref{eq:scl} with random initial data \eqref{eq:scl_ic}
  given by a $\Lk{1}{{\R^{d}}}$-valued random variable $\bar u_0$ satisfying
  \begin{align}
    \label{eq:ic_linfty_bv}
    \bar{u}_0(\cdot;\omega_\xi)
    \in (L^1\cap\linf)({\R^{d}})\quad\text{for}\quad\Prob_\xi\text{-a.s. } \omega_\xi\in\Omega_\xi.
  \end{align}

  Furthermore, assume $\norm[\Lk{k}{\Omega_\xi;\Lk{1}{{\R^{d}}}}]{\bar u_0} < \infty$ for some $k\in\N$. Then, there exists a unique random entropy solution $\bar u:\Omega_\xi\rightarrow C_{b}([0,T]; \Lk{1}{{\R^{d}}})$
  such that for all $0\leq t\leq T$ and all $k\in\N$:
  \begin{align*}
    {\norm{\bar u}}_{L^k(\Omega_\xi; C([0,T];\Lk{1}{{\R^{d}}}))}    & \leq {\norm{\bar u_0}}_{L^k(\Omega_\xi;\Lk{1}{{\R^{d}}})}
    \intertext{and}
    \norm[(\lk{1}\cap \linf)({\R^{d}})]{\bar u(t,\cdot;\omega_\xi)} & \leq
    \norm[(\lk{1}\cap \linf)({\R^{d}})]{\bar{u}_0(\cdot;\omega_\xi)}
  \end{align*}
  for $\Prob_\xi$-a.s. ${\omega_\xi}\in\Omega_\xi$.
\end{theorem}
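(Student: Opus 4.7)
The plan is a pathwise-plus-measurability argument: solve the Cauchy problem deterministically for $\Prob_\xi$-almost every realization using classical Kruzhkov theory, establish strong measurability of the resulting pathwise solution as a Banach space valued random variable, and then integrate the pathwise bounds against $\Prob_\xi$. To begin, fix $\omega_\xi$ in the set of full probability on which $\bar u_0(\cdot;\omega_\xi) \in (L^1 \cap L^\infty)(\R^d)$, which is guaranteed by \eqref{eq:ic_linfty_bv}. Kruzhkov theory then yields a unique entropy solution $\bar u(\cdot,\cdot;\omega_\xi) \in C_b([0,T];L^1(\R^d))$ of \eqref{eq:scl}--\eqref{eq:scl_ic} satisfying the pointwise estimate $\|\bar u(t,\cdot;\omega_\xi)\|_{(L^1 \cap L^\infty)(\R^d)} \leq \|\bar u_0(\cdot;\omega_\xi)\|_{(L^1 \cap L^\infty)(\R^d)}$ for all $t \in [0,T]$, which already gives the second asserted inequality.

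The next step is to verify strong (Bochner) measurability of $\omega_\xi \mapsto \bar u(\cdot,\cdot;\omega_\xi)$ with values in $C_b([0,T];L^1(\R^d))$. The key ingredient is the $L^1$-contraction property of Kruzhkov solutions, which renders the solution operator $S : L^1(\R^d) \to C([0,T];L^1(\R^d))$, $v_0 \mapsto S v_0$, globally Lipschitz continuous. Since $\bar u_0$ is by assumption strongly measurable as an $L^1(\R^d)$-valued random variable, the composition $\bar u = S \circ \bar u_0$ inherits strong measurability, because the continuous image of a strongly measurable map is strongly measurable.

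Finally, I would raise the pathwise $L^1$-bound to the $k$-th power, take the supremum over $t \in [0,T]$, and integrate against $\Prob_\xi$ to obtain $\int_{\Omega_\xi} \sup_{t \in [0,T]} \|\bar u(t,\cdot;\omega_\xi)\|^k_{L^1}\,d\Prob_\xi(\omega_\xi) \leq \int_{\Omega_\xi} \|\bar u_0(\cdot;\omega_\xi)\|^k_{L^1}\,d\Prob_\xi(\omega_\xi)$, which is finite by the moment assumption $\|\bar u_0\|_{L^k(\Omega_\xi; L^1(\R^d))} < \infty$ and yields the first asserted inequality. Uniqueness of the random entropy solution follows from the pathwise Kruzhkov uniqueness applied on the set of full $\Prob_\xi$-measure. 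The main technical hurdle is the measurability step, i.e., lifting almost-surely-defined deterministic solutions to a Bochner-measurable random variable in the infinite-dimensional space $C_b([0,T];L^1(\R^d))$. The $L^1$-contraction of the Kruzhkov semigroup is precisely what keeps this reduction to a continuous-image-of-a-strongly-measurable-map argument clean; without it one would have to approximate by, say, simple functions in $\omega_\xi$ and invoke a Pettis-type theorem together with separability of the range space.
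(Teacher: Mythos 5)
Your argument is correct and is essentially the proof of this result in the cited source \cite{Mishra2012}; note that the paper itself states the theorem without proof, importing it from that reference. The route you describe --- pathwise Kruzhkov well-posedness on the full-measure set where $\bar u_0(\cdot;\omega_\xi)\in (L^1\cap L^\infty)(\R^d)$, strong measurability of $\omega_\xi\mapsto \bar u(\cdot,\cdot;\omega_\xi)$ via composition of the $L^1$-contractive (hence Lipschitz) solution operator with the strongly measurable initial datum, and integration of the pathwise bounds against $\Prob_\xi$ --- is exactly the standard argument, and your remark that the measurability step reduces to an a.e.-limit-of-countably-valued-functions argument is the right way to make it rigorous.
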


Furthermore, if the $k$-th stochastic moment of the initial condition \eqref{eq:scl_ic} exists for some $k\in\N$, we obtain existence of the $k$-th moment of the random entropy solution.


\subsection{Deterministic reformulation}
\label{subsec:det_vs_stoch}

Motivated by \cite{Schwab2013,Tokareva2013} we introduce a deterministic approach to treat the stochastic parameter $\omega_\xi$. Since, there exists a random entropy solution, we introduce the stochastic variable $\omega_\xi$ as additional (spatial) variable resulting in a deterministic problem.

For $\bx\in\R^{d}$ and $\bxi\in\R^m$ we introduce the new variable
$\by := (\bx,\bxi) \in {\R^{d + m}}$.
Furthermore, we define a new flux $\bof\in C^1(\R,{\R^{d + m}})$ with zero flux in the (stochastic) directions, i.e.,
\begin{align}
  \label{eq:equiv_random_det_flux}
  \bof_{d+j} \equiv 0,\quad j=1,\dots,m
\end{align}
and we consider
\begin{subequations}
  \label{eq:det_problem}
  \begin{alignat}{2}
    \label{eq:det_sto_cl}
     & u_t(t,\by) + \sum_{j = 1}^{{d + m}} \pardev{\by_j} \bof_j(u(t,\by))  = 0, &  & \quad \by \in {\R^{d + m}},\ t \in (0,T) \\
    \label{eq:det_sto_cl_ic}
     & u(0,\by) = u_0(\by),                                                              &  & \quad \by \in {\R^{d + m}}.
  \end{alignat}
\end{subequations}
The new conserved variable is
$u(t,\by) \equiv u(t,(\bx,\bxi))$.
Following the classical theory of deterministic scalar conservation laws, cf.~\cite{Godlewski:1991}, the entropy solution is then defined as follows:

\begin{definition}
  \label{def:entropy_solution}
  A solution
  $u\in  C_b([0,T];L^1_{\text{loc}}(\R^{d+m}))$
  to the deterministic Cauchy problem \eqref{eq:det_problem} with initial data
  $u_0\in L^\infty(\R^{d+m})$
  is an entropy solution if it satisfies the following:
  \begin{enumerate}
    \item Weak solution: $u$ satisfies the weak formulation
          \begin{align}
            \label{eq:weak_formulation}
            \begin{split}
              \int_0^\infty\int_{{\R^{d + m}}} & \left(u(t,\by) \varphi_t(t, \by) + \sum_{j=1}^{d + m} \bof_j(u(t,\by))\pardev{\by_j}\varphi(t,\by)\right)\dif \by\dif t \\
              & + \int_{{\R^{d + m}}} u_0(\by)\varphi(0,\by)\dif \by = 0
            \end{split}
          \end{align}
          for all test functions $\varphi \in C_0^1([0,T]\times{\R^{d + m}})$.
    \item Entropy condition: Let $(\eta,\bQ)$ be an entropy-entropy flux pair, i.e., $\eta:\R\rightarrow\R$ is a convex function and $\bQ:\R\rightarrow\R^{d+m}$ with $\bQ'_j(u) = \eta'(u)\bof'_j(u),\ j=1,\dots,{d + m}$. Then, $u$ satisfies
          \begin{align}
            \begin{split}
              \label{eq:entropy_condition}
              \int_0^\infty\int_{{\R^{d + m}}} & \left(\eta(u(t,\by))\varphi_t(t,\by) + \sum_{j=1}^{d + m}\bQ_j(u(t,\by))\pardev{\by_j}\varphi(t,\by)\right)\dif \by\dif t \\
              & + \int_{\R^{d+m}} \eta(u_0(\by))\varphi(0,\by)\dif \by \geq 0
            \end{split}
          \end{align}
          for all test functions $\varphi\in C^1_0([0,T] \times {\R^{d + m}})$ with $\varphi \geq 0$.
  \end{enumerate}
\end{definition}

Note that due to the vanishing fluxes in the stochastic direction
\begin{align}
  \label{eq:equiv_random_det_entropyflux}
  \bQ'_{d+j} = 0, \ j=1,\dots,m
\end{align}
holds for the entropy flux.
The existence of a unique entropy solution is proven in \cite{Dafermos2016}.
\begin{theorem}[\cite{Dafermos2016}, Chapter VI, Theorem 6.2.2]
  \label{thm:entropy_solution}
  The deterministic Cauchy problem \eqref{eq:det_problem} with  initial data
  $u_0\in L^\infty(\R^{d+m})$ has a unique entropy solution
  $u\in  C_b([0,T];L^1_{\text{loc}}(\R^{d+m}))$
  for all $T>0$.
\end{theorem}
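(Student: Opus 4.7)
The statement is a direct consequence of the classical Kruzhkov theory for multidimensional scalar conservation laws, since once the stochastic variable $\bxi$ is reinterpreted as an additional spatial-like coordinate, problem \eqref{eq:det_problem} is a standard Cauchy problem for a scalar conservation law in $d+m$ independent variables with a $C^1$ flux $\bof:\R\to\R^{d+m}$ and $L^\infty$ initial datum. The plan is therefore to verify that the hypotheses of Dafermos~\cite[Ch.~VI, Thm.~6.2.2]{Dafermos2016} are met and then invoke that result; the vanishing of the flux components in the last $m$ coordinates, cf.\ \eqref{eq:equiv_random_det_flux}, places no additional restriction since the Kruzhkov framework permits any $C^1$ flux.

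The construction of a solution I would carry out by the classical vanishing viscosity (or, equivalently, front tracking in the BV case) method: approximate $u_0$ by a sequence $u_0^\varepsilon \in L^\infty \cap L^1$ of smooth data and consider the parabolic regularizations
\begin{equation*}
  u^\varepsilon_t + \sum_{j=1}^{d+m} \partial_{\by_j} \bof_j(u^\varepsilon) = \varepsilon \Delta u^\varepsilon,\qquad u^\varepsilon(0,\cdot)=u_0^\varepsilon.
\end{equation*}
Standard maximum principle and $L^1$-contraction estimates provide the uniform bounds $\|u^\varepsilon(t,\cdot)\|_{L^\infty} \le \|u_0\|_{L^\infty}$ and spatial translation equicontinuity in $L^1_{\text{loc}}$, from which a compactness argument extracts a limit $u$ satisfying \eqref{eq:weak_formulation}. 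Multiplying the regularized equation by $\eta'(u^\varepsilon)\varphi$ with nonnegative $\varphi \in C_0^1$, integrating by parts and passing to the limit yields \eqref{eq:entropy_condition}; the nonpositive viscous contribution $-\varepsilon \int \eta''(u^\varepsilon)|\nabla u^\varepsilon|^2 \varphi$ is absorbed into the inequality, which is precisely the mechanism selecting the Kruzhkov entropy solution.

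For uniqueness I would apply the Kruzhkov doubling-of-variables technique to two entropy solutions $u$ and $v$: using the family of entropies $\eta(\cdot)=|\cdot - k|$ and corresponding fluxes $\bQ_j(\cdot)=\sgn(\cdot - k)(\bof_j(\cdot)-\bof_j(k))$, one tests \eqref{eq:entropy_condition} with a mollified product test function in doubled variables $(t,\by,s,\bz)$ and sends the mollification scale to zero. This delivers the Kato-type inequality
\begin{equation*}
  \partial_t |u-v| + \sum_{j=1}^{d+m}\partial_{\by_j}\bigl[\sgn(u-v)(\bof_j(u)-\bof_j(v))\bigr] \le 0
\end{equation*}
in the sense of distributions, from which $L^1_{\text{loc}}$-contraction, finite speed of propagation (determined by $\|\bof'\|_{L^\infty}$ on the bounding interval), and uniqueness follow. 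The continuity in time, i.e.\ $u\in C_b([0,T];L^1_{\text{loc}})$, is a standard consequence of the $L^1$-contraction together with the weak formulation, which pins down the initial trace.

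There is no real obstacle here beyond bookkeeping, since the theorem is quoted verbatim from \cite{Dafermos2016}; the only mildly delicate point worth highlighting is that the degenerate direction $\bof_{d+j}\equiv 0$, $j=1,\dots,m$, does not invalidate the Kruzhkov argument: the doubling of variables and the compactness arguments above treat all coordinates symmetrically and degenerate components of $\bof$ simply contribute trivially to the Kato inequality.
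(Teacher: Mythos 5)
Your proposal is correct: the paper offers no proof of this statement at all, since it is quoted verbatim from Dafermos \cite{Dafermos2016} (Ch.~VI, Thm.~6.2.2), and your sketch -- vanishing viscosity for existence, Kruzhkov doubling of variables for the Kato inequality and $L^1_{\text{loc}}$-contraction, finite speed of propagation to handle merely $L^\infty$ data -- is exactly the standard argument underlying the cited result. Your remark that the degenerate flux components $\bof_{d+j}\equiv 0$ impose no restriction is also the right observation and is consistent with how the paper uses the theorem.
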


Note that the entropy solution of the deterministic problem \eqref{eq:det_problem} coincides with the entropy solution of the stochastic problem \eqref{eq:stoch_problem}.

\begin{theorem}
  \label{thm:equiv_random_det}
  Assume that the probability density $p_\xi$ of the absolute continuous random variable $\xi$ is either positive or is compactly supported.  Let $\bar u_0$ be a $L^1({\R^{d}})$-valued random variable fulfilling \eqref{eq:ic_linfty_bv} and let
  $u_0\in  \linf({\R^{d + m}})$
  be the initial data of problem \eqref{eq:det_problem} such that
  \begin{align}
    \label{eq:equiv_random_det_init}
    u_0((\bx,\omega_\xi))  = \bar u_0(\bx;\omega_\xi) \quad\mbox{for}\quad \Prob_\xi\mbox{-a.s.}~\omega_\xi\in\Omega_\xi\quad\mbox{and for a.e.}\quad \bx\in\R^d.
  \end{align}
  Furthermore, we assume that the flux fulfills \eqref{eq:equiv_random_det_flux}.
  \\
  Let $\bar u$ and $u$ denote the unique entropy solutions according to Theorem \ref{thm:stochastic_entropy_solution} and \ref{thm:entropy_solution} of the stochastic Cauchy problem \eqref{eq:stoch_problem} and the deterministic Cauchy problem \eqref{eq:det_problem}, respectively.
  Then it holds
  \begin{align}
    \label{eq:equiv_random_det}
    u(t,(\bx,\omega_\xi))=  \bar u(t,\bx;\omega_\xi)  \quad\mbox{for}\quad \Prob_\xi\mbox{-a.s.}~\omega_\xi\in\Omega_\xi\quad\mbox{and for a.e.}\quad \bx\in\R^d.
  \end{align}
\end{theorem}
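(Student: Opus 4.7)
The plan is to construct a candidate deterministic entropy solution $\tilde u$ on all of $\R^{d+m}$ from the random entropy solution $\bar u$ together with the Kruzhkov theory for the $d$-dimensional conservation law, verify that $\tilde u$ satisfies Definition~\ref{def:entropy_solution}, and then invoke the uniqueness statement in Theorem~\ref{thm:entropy_solution} to conclude $\tilde u = u$ almost everywhere. Restriction to the $\Prob_\xi$-full set on which $\tilde u \equiv \bar u$ by construction then yields \eqref{eq:equiv_random_det}.

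The key observation is that the zero-flux condition \eqref{eq:equiv_random_det_flux} decouples \eqref{eq:det_sto_cl} in the stochastic direction: for each fixed $\bxi$ the slice $\tilde u(\cdot,\cdot,\bxi)$ ought to solve the $d$-dimensional scalar conservation law with flux $(\bof_1,\dots,\bof_d)$ and initial data $u_0(\cdot,\bxi) \in L^\infty(\R^d)$, the latter being well-defined for a.e.\ $\bxi$ by Fubini from $u_0 \in L^\infty(\R^{d+m})$. I therefore \emph{define} $\tilde u(\cdot,\cdot,\bxi)$ to be the unique Kruzhkov entropy solution with that initial data. By assumption \eqref{eq:equiv_random_det_init} this initial data agrees $\Prob_\xi$-a.s.\ with $\bar u_0(\cdot;\bxi)$, so uniqueness of the $d$-dimensional entropy solution (applied to the random entropy solution from Theorem~\ref{thm:stochastic_entropy_solution}) forces $\tilde u(\cdot,\cdot,\bxi) = \bar u(\cdot,\cdot;\bxi)$ for $\Prob_\xi$-a.s.\ $\bxi$, as desired.

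Given any test function $\varphi \in C_0^1([0,T]\times\R^{d+m})$, I freeze $\bxi$: the slice $\bar\varphi_\bxi(t,\bx) := \varphi(t,\bx,\bxi)$ is admissible in the $d$-dimensional weak formulation \eqref{eq:stochastic_weak_formulation} satisfied by $\tilde u(\cdot,\cdot,\bxi)$ for every $\bxi \in \R^m$. Integrating over $\bxi$ with Fubini reproduces the deterministic weak formulation \eqref{eq:weak_formulation}, the vanishing of $\bof_{d+j}$ being exactly what prevents any $\partial_{\bxi_j}\varphi$ terms from appearing. The analogous argument with \eqref{eq:equiv_random_det_entropyflux} and the sign-preservation $\varphi \geq 0 \Rightarrow \bar\varphi_\bxi \geq 0$ delivers the entropy inequality \eqref{eq:entropy_condition}. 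Hence $\tilde u$ is an entropy solution of \eqref{eq:det_problem} with initial data $u_0$, and Theorem~\ref{thm:entropy_solution} gives $\tilde u = u$ almost everywhere.

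The main obstacle is measure-theoretic rather than dynamical. One must produce a jointly measurable version of $\tilde u$ lying in $C_b([0,T]; L^1_{\text{loc}}(\R^{d+m}))$; this follows from the measurability of the random field $\bar u : \Omega_\xi \to C_b([0,T]; L^1(\R^d))$ in Theorem~\ref{thm:stochastic_entropy_solution} combined with the $L^1$-contraction of the Kruzhkov semigroup, which makes the extension $\bxi \mapsto \tilde u(\cdot,\cdot,\bxi)$ measurable. The hypothesis that $p_\xi$ is either positive a.e.\ or compactly supported enters precisely here: in the positive-density case $\Prob_\xi$-null sets are automatically Lebesgue-null, so the two prescriptions for $\tilde u(\cdot,\cdot,\bxi)$ agree Lebesgue-a.e.\ and the Fubini integration is unambiguous; in the compactly supported case the Kruzhkov extension handles the $\Prob_\xi$-null complement directly. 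Once $\tilde u = u$ a.e.\ is established, the identity \eqref{eq:equiv_random_det} follows by restricting to the $\Prob_\xi$-full set on which $\tilde u = \bar u$ by construction.
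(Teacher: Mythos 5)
Your proof is correct, but it takes a genuinely different route from the paper's. The paper argues \emph{both} implications: it first shows that $\bar u$, viewed as a function of $(\bx,\omega_\xi)$, satisfies Definition~\ref{def:entropy_solution} by testing \eqref{eq:stochastic_weak_formulation} against $\bar\varphi=\varphi\,p_\xi^{-1}$ and integrating against $\dif\Prob_\xi$ so that the density cancels; it then proves the converse (that the restriction of $u$ is a random entropy solution) via a mollification argument localizing in $\bxi$. You instead prove only the forward direction, and you do so by a slice-wise construction: define $\tilde u(\cdot,\cdot,\bxi)$ as the Kruzhkov solution with data $u_0(\cdot,\bxi)$ for Lebesgue-a.e.\ $\bxi$, integrate the unweighted slice identities against Lebesgue measure (the vanishing of $\bof_{d+j}$ and $\bQ'_{d+j}$ killing the stochastic-derivative terms), and close with the uniqueness in Theorem~\ref{thm:entropy_solution}; this single direction indeed suffices for \eqref{eq:equiv_random_det}. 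Your route buys two things: it avoids dividing by $p_\xi$ (and hence any implicit regularity or positivity of $p_\xi^{-1}$ needed for $\bar\varphi$ to be an admissible $C^1_0$ test function), and it makes the positivity/compact-support dichotomy essentially superfluous, since absolute continuity of $\Prob_\xi$ with respect to Lebesgue measure already lets you pass from ``Lebesgue-a.e.\ $\bxi$'' to ``$\Prob_\xi$-a.s.\ $\bxi$'' at the end --- your closing remark about ``two prescriptions'' slightly misdescribes where that hypothesis acts (in the paper it is needed to extend $\bar u$ to a Lebesgue-a.e.-defined function of $\bxi$, a step your construction replaces outright). The price you pay, which you correctly identify and adequately resolve via the $L^1_{\mathrm{loc}}$-contraction of the Kruzhkov semigroup and the uniform $L^\infty$ bound (dominated convergence then gives $\tilde u\in C_b([0,T];L^1_{\mathrm{loc}}(\R^{d+m}))$), is the joint measurability and time-continuity of the slice-wise object, which the paper gets for free by defining $u$ directly from $\bar u$.
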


The proof of Theorem \ref{thm:equiv_random_det} is given in Appendix \ref{appendix:proof_lemmata}.

\paragraph*{}
Instead of  approximating the stochastic moments   of the stochastic Cauchy problem \eqref{eq:stoch_problem} by means of Monte-Carlo type schemes, Theorem \ref{thm:equiv_random_det} allows to approximate these moments in a post-processing step. Therein,   we  apply classical deterministic discretization techniques such as finite volume schemes or DG schemes to the deterministic Cauchy problem \eqref{eq:det_problem}.


\section{Multiresolution analysis for DG spaces}
\label{sec:MRA-DG}

The deterministic problem \eqref{eq:det_problem} is approximately solved by applying a modal DG scheme equipped with multiresolution-based grid adaptation \cite{HovhannisyanMuellerSchaefer-2014,Gerhard2014a,GerhardIaconoMayMueller-2015,GerhardMueller-2016}. The key ingredient is a multiresolution analysis (MRA)  applied to the DG approximation at each time step. Performing hard thresholding on the coefficients of the MRA is then employed to locally adapt the grid.
In the present work we will be interested in the control of the  error in the moments of the solution \eqref{eq:expectation_value} and \eqref{eq:centralized_moments} induced by the threshold error in  the DG approximation to \eqref{eq:det_problem}.
For this purpose, we first briefly summarize the general concept of a MRA for DG spaces following \cite{Gerhard:2017}. Then we specify this for a MRA for products of DG spaces that will allow us to investigate the aforementioned error in the moments.

\subsection{General concept of MRA}
The concept is based on a \textit{multiresolution sequence} ${\cal S} = \lbrace S_l\rbrace_{l\in\N_0}$ defined on some Hilbert space  ${\cal H}$, i.e.,   $S_l$ is a closed and linear subspace of ${\cal H}$, ${\cal S}$ is nested, i.e., $S_l\subset S_{l+1}$, $l\in\N_0$, and the union of all subspaces is dense in ${\cal H}$, cf.~\cite{Mallat:1989}. For our purposes we choose ${\cal H} = L^2(\Omega)$ where $\Omega\subset\R^n$ is some open and bounded domain with Lipschitz boundary. On this domain we introduce a \textit{hierarchy of nested grids}  $\mathcal{G}_l:=\{V_{\lambda}\}_{\lambda\in \mathcal{I}_l}$, $l\in\N_0$, i.e.,
\begin{align*}
  V_\lambda \cap V_\mu = \emptyset,\quad \lambda,\mu\in \calI_l, \lambda \ne \mu,     \quad
  \overline\Omega = \overline{\bigcup_{\lambda\in \calI_l} V_{\lambda}}
\end{align*}
where each cell $V_\lambda$ on level $l$,  open and bounded with Lipschitz boundary,
is composed of cells on level $l+1$, i.e.,
\begin{align*}
  \overline{V_{\lambda}} = \overline{\bigcup_{\mu\in\mathcal{M}_{\lambda}} V_{\mu}}, \qquad \forall\, \lambda\in \mathcal{I}_{l} ,
\end{align*}
where $\mathcal{M}_{\lambda}\subset\mathcal{I}_{l+1}$ is the refinement set of the cell $V_{\lambda}$.
On this grid hierarchy we define  the sequence ${\cal S} = \lbrace S_l\rbrace_{l\in\N_0}$ of DG spaces
\begin{align*}
  S_l := \{u\in L^2(\Omega) :\,  \left. u\right|_{V_{\lambda}}\in \Pi_{p-1}(V_{\lambda}),\  \lambda \in\mathcal{I}_l\}
\end{align*}
with $\Pi_{p-1}(V_{\lambda})$ the local polynomial space with maximal degree $p-1$. This sequence is a multiresolution sequence for $L^2(\Omega)$ if the hierarchy of nested grids  is dense, i.e.,
\begin{align*}
  \lim_{l\to\infty} \max_{\lambda\in\mathcal{I}_l} \diam(V_\lambda) = 0.
\end{align*}
Due to the nestedness, there exists the orthogonal complement space $W_l$ of $S_l$ with
respect to $S_{l+1}$ defined by
\begin{align*}
  W_l := \left\{ d^l\in S_{l+1} \,:\, (d^l,v)_{L^2(\Omega)} = 0,\ \forall\,v\in S_l \right\}
\end{align*}
such that
\begin{align*}
  S_{l+1} = S_l \oplus W_l.
\end{align*}
The decomposition
\begin{align*}
  S_L = S_0 \oplus W_0 \oplus \cdots \oplus W_{L-1}
\end{align*}
is called the \emph{multiscale decomposition} of $S_L, L\in\N$.
Due to the denseness in $L^2(\Omega)$ of the MRA, each function $u \in L^2(\Omega)$ can be represented by an infinite
multiscale decomposition
\begin{equation}
  \label{eq:msd}
  u = u^0 + \sum_{l\in\N_0} d^l
\end{equation}
with its contributions given by the orthogonal projections
\begin{equation}
  \label{eq:orthproj}
  u^l \equiv P_{S_l}(u) = P_{S_l}(u^{l+1}),\qquad
  d^l \equiv P_{W_l}(u) = P_{W_l}(u^{l+1}),\qquad
  l\in \N_0 .
\end{equation}
In particular, it holds
\begin{align*}
  u^{l+1} = u^l +   d^l ,\qquad
  l\in \N_0.
\end{align*}
Since the spaces $S_l$ as well as $W_l$ are piecewise polynomials, the orthogonal projections
\eqref{eq:orthproj} are computed locally on each element. This allows to spatially separate
the local contributions
\begin{align*}
  u_\lambda^l := u^l \cdot \mychi{V_\lambda} \in
  S_{l,\lambda}
  = \Pi_{p-1}(V_\lambda),\qquad
  d_\lambda^l := d^l \cdot \mychi{V_\lambda} \in
  W_{l,\lambda}
  \subset L^2(V_\lambda)
\end{align*}
in the multiscale decomposition \eqref{eq:msd} of the local DG space $S_{l,\lambda}$ and the
local complement space $W_{l,\lambda}$, respectively, where $\mychi{V_\lambda}$ is the indicator function on $V_\lambda$.

Due to orthogonality the local details may become small
\begin{equation}
  \label{eq:cancellation_property}
  \Vert d_\lambda^l \Vert_{L^2(V_\lambda)} \le
  \diam(V_\lambda)^p \sum_{\Vert \ibalpha\Vert_1 = p}
  \frac{1}{\balpha!} \Vert D^{\ibalpha} u  \Vert_{L^2(V_\lambda)}
\end{equation}
for $\lambda\in \mathcal{I}_l$, $l\in\N_0$, $V_\lambda$ convex and  $u|_{V_\lambda}\in H^p(V_\lambda)$. A proof of \eqref{eq:cancellation_property} is given in  \cite{Gerhard:2017}.
This motivates to discard small details from the multiscale decomposition of $u^L\in S_L$.
We introduce the notion of a \textit{$\Vert\cdot\Vert_\lambda$-significant} local detail, i.e.,
\begin{equation}
  \label{eq:significant_details}
  \Vert d_\lambda^l \Vert_\lambda > \veps_{\lambda,L},
\end{equation}
with $\Vert\cdot\Vert_\lambda: W_{l,\lambda} \to\R$ a local norm for the local complement space that is equivalent
to $ \Vert \cdot \Vert_{L^2(V_\lambda)} / \sqrt{|V_\lambda|}$,
i.e.,
\begin{equation}
  \label{eq:local-normequivalence}
  c\, \Vert d^l_\lambda \Vert_\lambda \le
  \frac{\Vert d^l_\lambda \Vert_{L^2(V_\lambda)}}{\sqrt{|V_\lambda|}} \le
  C\, \Vert d^l_\lambda \Vert_\lambda
\end{equation}
with constants $c,C>0$ independent of $l$ and $\lambda$.
Here, the local threshold values $\veps_{\lambda,L}$ are chosen such that
\begin{equation}
  \label{eq:loc-thres}
  \sum_{l=0}^{L-1} \max_{\lambda\in \mathcal{I}_l} \veps_{\lambda,L} \le \veps_{\text{max}}
\end{equation}
for a given global threshold value $\veps_{\text{max}}>0$.
For a dyadic grid hierarchy \eqref{eq:loc-thres} holds by the geometric sum when choosing
\begin{align}
  \label{eq:loc-thresh-uniform}
  \veps_{\lambda,L} = \frac{h_L}{h_l} \veps_{\text{max}},\ \lambda\in\calI_l,
\end{align}
where $h_l$ denotes the uniform diameter of the cells on level $l$.

To determine a sparse approximation for $u^L\in S_L$ the \textit{set of significant details} $\calD_{L,\veps} \subset \bigcup_{l=0}^{L-1} \mathcal{I}_l$ is defined as the smallest
set containing the indices of $\Vert\cdot\Vert_\lambda$-significant contributions, i.e.,
\begin{align*}
  \left\{ \lambda \in \bigcup_{l=0}^{L-1} \mathcal{I}_l \,:\,
  \Vert d_\lambda^l \Vert_\lambda > \veps_{\lambda,L}\right\}
  \subset \calD_{L,\veps},
\end{align*}
and being a tree, i.e.,
\begin{align*}
  \mu \in \calD_{L,\veps} \quad \Rightarrow \quad \lambda\in \calD_{L,\veps} \quad
  \forall\, \lambda \mbox{ with } V_\mu \subset V_\lambda.
\end{align*}
Then the \textit{sparse approximation} $u^{L,\veps}$ of $u^L$ is defined as
\begin{align*}
  u^{L,\veps} :=
  \sum_{\lambda\in\mathcal{I}_0} u_\lambda^0 +
  \sum_{l=0}^{L-1} \sum_{\lambda\in \calD_{L,\veps} \cap \mathcal{I}_l} d^l_\lambda .
\end{align*}
According to Thm.~3.2 \cite{Gerhard:2017} the thresholding error can be estimated  for fixed global threshold value $\veps_{\text{max}}$ and local threshold values
$\veps_{\lambda,L}$ satisfying \eqref{eq:loc-thres}
by
\begin{equation}
  \label{eq:threshold-error-gerhard}
  \Vert u^L - u^{L,\veps} \Vert_{L^q(\Omega)} \le C\, |\Omega |^{1/q} \veps_{\text{max}}
\end{equation}
for $q\in\{1,2\}$ and $C$ according to \eqref{eq:local-normequivalence}.

\begin{remark}[MRA on weighted $L^2$-spaces.]
  \label{rem:weightedspaces}
  Note that  MRA is described in terms of projections  avoiding the explicit representation of basis functions.  However, to perform  MRA in the computer we need to introduce  bases for the spaces on each element $V_{\lambda}$  of the grid hierarchy, i.e.,
  $ S_{l,\lambda} = \text{span} \{ \phi_{l,\lambda,i} \}$ and
  $ W_{l,\lambda} = \text{span} \{ \psi_{l,\lambda,i} \}$.
  In particular, to compute  MRA  we need to calculate the mask coefficients
  $(\phi_{l,\lambda,i}, \phi_{l,\mu,j})_{L^2(\Omega)}$,
  $(\phi_{l,\lambda,i}, \psi_{l,\mu,j})_{L^2(\Omega)}$ and
  $(\psi_{l,\lambda,i}, \psi_{l,\mu,j})_{L^2(\Omega)}$.
  In case of a weighted $L^2$-space $L^2_w(\Omega)$ with a weight function $w\in L^1(\Omega,\R_{\geq 0})$ and inner product
  $(f,g)_{L^2_w(\Omega)} := \int_\Omega f(x) g(x) w(x) \diff x$
  this becomes a severe obstruction for MRA-based schemes. In general, for a nonlinear weight function there is no orthogonal preserving affine mapping of the elements of the grid hierarchy onto a reference element. Hence, the mask coefficients have to be computed elementwise for all levels. This leads to increased computational complexity. Contrary for non-weighted spaces,  the mask coefficients can be computed a priori.
  Therefore, the weight should not be included in the norms.
\end{remark}

\subsection{Multiresolution analysis for products of DG spaces}
\label{subseq:MRA-product}
The solution of the deterministic problem \eqref{eq:det_problem} is defined on the product $\Omega=\Omega_1\times\Omega_2$ whereas the stochastic moments \eqref{eq:expectation_value}, \eqref{eq:centralized_moments} of the solution are functions on $\Omega_1$. For convenience of presentation we identify in the following the spatial directions $\bx\in \R^d$ and the stochastic directions $\bxi\in \R^m$ with $\bx_1\in\Omega_1\subset \R^{d_1}$ and $\bx_2\in\Omega_2\subset \R^{d_2}$, respectively.

For the design of an efficient adaptive DG scheme for the deterministic problem \eqref{eq:det_problem} in Section \ref{sec:NumRes} it will be important to understand the  interaction of the  spatial and stochastic variables. For this purpose, we establish here  MRAs of DG spaces for $L^2(\Omega)$ and $L^2(\Omega_1)$.
Since the product space $L^2(\Omega_1)\times L^2(\Omega_2)$ is not isomorphic to $L^2(\Omega)$,
a multiresolution sequence ${\cal S} = \lbrace S_l\rbrace_{l\in\N_0}$ for $L^2(\Omega)$ can not be  constructed  as the product of two multiresolution sequences ${\cal S}^i = \lbrace S^i_l\rbrace_{l\in\N_0}$ for $L^2(\Omega_i)$, $i=1,2$, respectively. In general, the products $S_l:=S_l^1\times S_l^2$ are  not  linear spaces of $L^2(\Omega)$.
Therefore, we construct local bases for the DG spaces and  wavelet spaces on $\Omega$ as products of local bases for the DG spaces and wavelet spaces on $\Omega_i$, $i=1,2$, respectively. Since the local spaces are composed of polynomials and piecewise polynomials, respectively, this is possible due to the following Lemma.

\begin{lemma}{(Basis for product of polynomial spaces).}
  \label{lemma:basisproductpolynomialspace}
  Let be $\bPhi^i := \{ \phi^i_\ibalpha \,:\, \balpha \in \calP_i \}$ a basis for the  space  $\Pi_p(\Omega_i)$, $i=1,2$, of all polynomials of maximal degree $p$ on $\Omega_i\subset \R^{d_i}$. Then a basis of the  space $\Pi_{p}(\Omega)$ of all polynomials of maximal degree $p$ on $\Omega:=\Omega_1\times\Omega_2$ is given by
  \begin{align*}
    \bPhi := \bPhi^1 \times \bPhi^2 =
    \{ \phi_\ibalpha(\bx_1,\bx_2) = \phi^1_{\ibalpha_1}(\bx_1)  \phi^2_{\ibalpha_2}(\bx_2) \,:\,
    \balpha = (\balpha_1,\balpha_2)\in\calP,\ \balpha_i\in \calP_i,\ i=1,2 \} .
  \end{align*}
\end{lemma}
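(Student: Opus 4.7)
\emph{Proof proposal.} The plan is to verify three standard properties of the candidate family $\bPhi$: (i) each element of $\bPhi$ actually lies in $\Pi_p(\Omega)$; (ii) the cardinality $|\bPhi|$ matches $\dim\Pi_p(\Omega)$; and (iii) the family $\bPhi$ is linearly independent on $\Omega=\Omega_1\times\Omega_2$. Together, (i) and (iii) show that $\mathrm{span}\,\bPhi$ is a $|\bPhi|$-dimensional subspace of $\Pi_p(\Omega)$, whereupon (ii) forces equality and hence $\bPhi$ to be a basis.

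For (i) I would regard $\phi^1_{\ibalpha_1}$ as a function on $\Omega$ constant in $\bx_2$, which inherits from $\Pi_p(\Omega_1)$ the property of being polynomial of degree at most $p$ in each component of $\bx_1$; the analogous statement holds for $\phi^2_{\ibalpha_2}$ in $\bx_2$. The product $\phi_{\ibalpha}(\bx_1,\bx_2)=\phi^1_{\ibalpha_1}(\bx_1)\,\phi^2_{\ibalpha_2}(\bx_2)$ therefore has degree at most $p$ in each individual coordinate and lies in $\Pi_p(\Omega)$ (under the tensor-product convention of maximal coordinate degree, which is what makes the statement of the lemma correct). For (ii) I would then chain $|\bPhi|=|\bPhi^1|\cdot|\bPhi^2|=\dim\Pi_p(\Omega_1)\cdot\dim\Pi_p(\Omega_2)=\dim\Pi_p(\Omega)$, the last equality being the standard tensor-product dimension count for polynomial spaces in disjoint variable groups.

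The main step is the linear independence (iii), which I would establish by variable separation. Assume a vanishing relation
\begin{equation*}
\sum_{\balpha=(\ibalpha_1,\ibalpha_2)\in\calP} c_{\ibalpha}\,\phi^1_{\ibalpha_1}(\bx_1)\,\phi^2_{\ibalpha_2}(\bx_2)=0 \quad \text{for all }(\bx_1,\bx_2)\in\Omega.
\end{equation*}
For an arbitrary but fixed $\bx_2\in\Omega_2$, regroup this as $\sum_{\ibalpha_1\in\calP_1} a_{\ibalpha_1}(\bx_2)\,\phi^1_{\ibalpha_1}(\bx_1)=0$ with $a_{\ibalpha_1}(\bx_2):=\sum_{\ibalpha_2\in\calP_2} c_{(\ibalpha_1,\ibalpha_2)}\phi^2_{\ibalpha_2}(\bx_2)$. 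Linear independence of $\bPhi^1$ in $\Pi_p(\Omega_1)$ forces $a_{\ibalpha_1}(\bx_2)=0$ for every $\ibalpha_1\in\calP_1$, and since $\bx_2$ was arbitrary, linear independence of $\bPhi^2$ in $\Pi_p(\Omega_2)$ then yields $c_{(\ibalpha_1,\ibalpha_2)}=0$ for all $\balpha\in\calP$.

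The only point that deserves flagging as a potential obstacle is the degree convention underlying $\Pi_p$: the lemma is a genuinely tensor-product statement and it would fail if $\Pi_p$ were read as total-degree-$p$ polynomials (products could then attain total degree $2p$). Once the interpretation is pinned down consistently with the MRA construction of Section~\ref{sec:MRA-DG}, the remaining arguments are routine linear algebra on tensor products and require no further machinery.
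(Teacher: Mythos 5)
Your proof is correct and fills in exactly the "elementary" argument the paper leaves to the reader: membership, dimension count, and linear independence by separation of variables (with the implicit fact that polynomials vanishing on the open set $\Omega_1$ vanish identically). Your flag about the degree convention is also right and is confirmed by the paper's own notation $\calP_i = \{\balpha\in\N_0^{d_i} : \Vert\balpha\Vert_\infty \le p\}$, which pins $\Pi_p$ down as the tensor-product (maximal coordinate degree) space.
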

The proof is elementary using  the following notation
\begin{align*}
   & \calP_i = \{ \balpha\in\N_0^{d_i} \,:\, \Vert \balpha \Vert_\infty \le p \},\ i=1,2, \quad
  \calP = \calP_1\times\calP_2 ,                                                                \\
   & \bx = (\bx_1,\bx_2),\ \bx_i\in\Omega_i \subset\R^{d_i},\ i=1,2 ,\quad
  \bx^\ibalpha = \bx_1^{\ibalpha_1} \bx_2^{\ibalpha_2} .
\end{align*}

We emphasize that by means of Fubini the separation of variables allows  for the splitting of  integrals over $(\boldmath{x}_1,\boldmath{x}_2)$ into integrals over $\boldmath{x}_i$, $i=1,2$.

Now let be ${\cal S}^i = \lbrace S^i_l\rbrace_{l\in\N_0}$, $i=1,2$ and
${\cal S} = \lbrace S_l\rbrace_{l\in\N_0}$ multiresolution sequences of DG spaces for $L^2(\Omega_i)$, $i=1,2$ and   $L^2(\Omega)$ with $\Omega=\Omega_1\times\Omega_2$, respectively. Then, these multiresolution sequences are intertwined as follows:

\paragraph{Hierarchy of nested grids:}
Let be $\calG_l^i=\{V^i_\lambda \}_{\lambda\in\calI^i_l}$, $l\in\N_0$,  hierarchies of nested grids on $\Omega_i\subset\R^{d_i}$, $i=1,2$. From this we construct the sequence
$ \calG_l = \{V_\iblambda\}_{\iblambda \in\calI_l}$, $l\in\N_0$,
of grids on the domain $\Omega=\Omega_1\times\Omega_2\subset \R^{d_1+d_2}$ with cells
$ V_\iblambda:=V^1_{\lambda_1} \times V^2_{\lambda_2}$,
$ \blambda:=(\lambda_1,\lambda_2)\in\calI^1_l\times\calI^2_l =:\calI_l$.
Then $\calG_l=\calG^1_l\times \calG^2_l$ is a grid for $\Omega$.
The hierarchy is nested because
$ \overline{V_{\iblambda}} =
  \overline{\bigcup_{\ibmu\in\mathcal{M}_{\iblambda}} V_{\ibmu}}
$
holds for $\blambda=(\lambda_1,\lambda_2)\in \calI_l$
where
$\mathcal{M}_{\iblambda}:= \mathcal{M}^1_{\lambda_1}\times \mathcal{M}^2_{\lambda_2}  \subset \calI^1_{l+1}\times \calI^2_{l+1} = \calI_{l+1}$
is the refinement set of the cell $V_{\blambda}$.
This hierarchy is dense whenever the hierarchies $\calG^i_l$, $i=1,2$, are dense.

\paragraph{Local DG spaces and local complement spaces:}
For $\lambda_i \in\calI^i_l$, $l\in\N_0$, $i=1,2$, the local DG space
$S_{l,\lambda_i}^i$ and the
local complement space $W_{l,\lambda_i}^i$ are spanned by the local bases
\begin{align*}
  \Phi^i_{l,\lambda_i} = \{ \phi^i_{l,\lambda_i, \ibi_i} \,:\, \bi_i\in\calP_i \},\quad
  \Psi^i_{l,\lambda_i} = \{ \psi^i_{l,\lambda_i, \ibi_i, \ibe_i} \,:\, \bi_i\in\calP_i, \be_i\in\calE_i^* \}
\end{align*}
with $\calE_i:=\{0,\ldots, \#\calM^i_{\lambda}-1\}$, $\calE_i^*:=\calE_i\backslash\{0\}$,
$\calP_i:=\{ \balpha\in\N_0^{d_i}\,:\, \Vert \balpha\Vert_\infty \le p-1 \}$.
Due to orthogonality of the global spaces $S_l$ and $W_l$, these local bases need to be orthogonal to each other. Furthermore, we assume that the two bases themselves are orthogonal, i.e., it holds
\begin{align*}
  (\phi^i_{l,\lambda_i, \ibi_i}, \phi^i_{l,\lambda_i, \ibi'_i})_{L^2(V_{\lambda_i})} = \delta_{\ibi_i,\ibi'_i},\
  (\psi^i_{l,\lambda_i, \ibi_i,e_i}, \psi^i_{l,\lambda_i, \ibi'_i,e'_i})_{L^2(V_{\lambda_i})} = \delta_{\ibi_i,\ibi'_i}\delta_{e_i,e'_i},\
  (\phi^i_{l,\lambda_i, \ibi_i}, \psi^i_{l,\lambda_i, \ibi'_i,e'_i})_{L^2(V_{\lambda_i})} = 0
\end{align*}
for $\bi_i,\bi'_i\in\calP_i$, $e_i,e'_i\in\calE^*_i$.

For $\blambda \in\mathcal{I}_l$, $l\in\N_0$, the local DG space
$S_{l,\iblambda}$ and the
local complement space $W_{l,\iblambda}$ are spanned by the local bases
\begin{align*}
  \Phi_{l,\iblambda} = \{ \phi_{l,\iblambda, \ibi} \,:\, \bi\in\calP \},\quad
  \Psi_{l,\iblambda} = \{ \psi_{l,\iblambda, \ibi, \ibe} \,:\, \bi\in\calP, \be\in\calE^* \}
\end{align*}
with $ \calP:= \calP_1\times\calP_2$,
$\calE:= \calE_1\times\calE_2 $, $\calE^*:=\calE\backslash\{\bzero\} = (\calE_1\times \calE_2)\backslash\{\bzero\}$. The basis functions are determined as in Lemma \ref{lemma:basisproductpolynomialspace} by the tensor products
\begin{align*}
  \phi_{l,\iblambda, \ibi}(\bx) = \phi^1_{l,\lambda_1, \ibi_1} (\bx_1)\phi^2_{l,\lambda_2, \ibi_2}(\bx_2),\quad
  \psi_{l,\iblambda, \ibi,\ibe}(\bx) = \psi^1_{l,\lambda_1, \ibi_1, e_1}(\bx_1) \psi^2_{l,\lambda_2, \ibi_2,e_2}(\bx_2)
\end{align*}
for
$\blambda=(\lambda_1,\lambda_2)\in \calI^1_l\times\calI^2_l=\calI_l$,
$\bi=(\bi_1,\bi_2)\in \calP^1\times\calP^2=\calP$,
$\be=(e_1,e_2)\in \calE^*$ and $\bx=(\bx_1,\bx_2)\in\Omega_1\times\Omega_2=\Omega$.
Due to Fubini, orthogonality of the bases $\Phi^i_{l,\lambda_i}$ and $\Psi^i_{l,\lambda_i}$, $i=1,2$, implies orthogonality of
the bases $\Phi_{l,\iblambda}$ and $\Psi_{l,\iblambda}$, i.e.,
\begin{align*}
  (\phi_{l,\iblambda, \ibi}, \phi_{l,\iblambda, \ibi'})_{L^2(V_{\iblambda})} =
  \delta_{\ibi,\ibi'},\quad
  (\psi_{l,\iblambda, \ibi,\ibe}, \psi_{l,\iblambda, \ibi',\ibe'})_{L^2(V_{\iblambda})} =
  \delta_{\ibi,\ibi'}\delta_{\ibe,\ibe'},    \quad
  (\phi_{l,\iblambda, \ibi}, \psi_{l,\iblambda, \ibi',\ibe'})_{L^2(V_{\iblambda})} =
  0
\end{align*}
for $\bi=(\bi_1,\bi_2),\bi'=(\bi_1',\bi_2')\in\calP_1\times\calP_2=\calP$,
$\be=(e_1,e_2),\be'=(e_1',e_2')\in\calE^*$.

\paragraph{}
The previous part is required for the novel adaptation strategy below.

\section{Error analysis for the novel MRA strategy}
\label{sec:MRA-moments}
We investigate the error in the moments by determining an appropriate local norm $\Vert\cdot\Vert_\lambda$ for the local complement space. The error is then bounded asymptotically by a given threshold value.
We first consider the error in the expectation \eqref{eq:expectation_value}.
The error in the higher order moments \eqref{eq:centralized_moments} will then be estimated by means of the error in the expectation.

Here, the main objective of the analysis is the derivation of a  threshold procedure for the multi\-resolution-based adaptive scheme to be introduced in Section \ref{sec:NumRes}. That allows to control the perturbation error in the stochastic moments. Those are  introduced by a perturbation of the underlying
function depending on both the spatial and stochastic variables. This is essential for the efficient performance of the adaptive scheme.
We emphasize that the MRA for the product of DG spaces introduced in Section \ref{subseq:MRA-product} has been tailored  to the efficiency of the adaptive scheme. In particular, due to Remark \ref{rem:weightedspaces} we need a MRA for the product space $L^2(\Omega_1)\times  L^2(\Omega_2)$ rather than $L^2(\Omega_1)\times  L_{p_\xi}^2(\Omega_2)$ -- even so  the latter might be considered to be more natural for the analysis below.
Therefore, we need orthogonality and vanishing moments with respect to $L^2(\Omega_2)$ instead of $L_{p_\xi}^2(\Omega_2)$. Moreover, introducing the basis functions as piecewise polynomials of the product space $\Pi_p(\Omega) = \Pi_p(\Omega_1) \times \Pi_p(\Omega_2)$ according to Lemma \ref{lemma:basisproductpolynomialspace} allows to employ separation of variables in the integrals.

In the following we assume that $\Omega_i\subset \R^{d_i}$, $i=1,2$, $d_1=d,\ d_2=m$, are open bounded domains with Lipschitz boundary.
Note that boundedness will be used in the analysis below. Whereas in Section \ref{sec:problem} we deliberately consider unbounded domains to avoid introducing boundary conditions. In Section \ref{sec:NumRes} the computations are performed on bounded domains using either periodic boundary conditions or constant data.
Furthermore, we assume that $p_\xi\in L^1(\Omega_2)$ is an essentially bounded probability density for an absolutely continuous random variable $\xi$ which is compactly supported on $\Omega_2$.

\begin{theorem}{(Error of expectation)}
  \label{thm:thres-error-1}
  Let be $u^L\in S_L$ and $u^{L,\veps} \in S_L$ its sparse approximation
  \begin{align*}
    u^{L,\veps} :=
    \sum_{\iblambda\in\mathcal{I}_0} u_\iblambda^0 +
    \sum_{l=0}^{L-1} \sum_{\iblambda\in \calD_{L,\veps} \cap \mathcal{I}_l} d^l_\iblambda .
  \end{align*}
  The set $\calD_{L,\veps}$ of significant details is  determined by the local norm
  \begin{align}
    \label{eq:loc-norm-expectation}
    \Vert d^l_\iblambda \Vert_{\iblambda} :=
    \max_{\ibi\in \calP, \ibe\in\calE^*} \{ |d_{l,\iblambda,\ibi,\ibe}|\,  \Vert\psi_{l,\iblambda,\ibi,\ibe}\Vert_{L^2(V_\iblambda)}\}/ \sqrt{|V_\iblambda|}
  \end{align}
  using local threshold values $\veps_{\iblambda,L,q}$
  depending on $q\in\{1,2\}$ with $1/q+1/q'=1$
  such that
  \begin{align}
    \label{eq:loc-thresh-cond-expectation}
    \sum_{l=0}^{L-1} \max_{\iblambda\in\calI_l} \left(\veps_{\iblambda,L,q} \Vert p_\xi \Vert_{L^{q'}(V^2_{\lambda_2})} \right) \le \veps_{\text{max}}.
  \end{align}
  Then the error in the expectation is estimated by
  \begin{align}
    \label{eq:thres-error-expectation-I}
    \Vert \E[u^L]- \E[u^{L,\veps}] \Vert_{L^q(\Omega_1)} \le
    \Vert \E[|u^L - u^{L,\veps}|] \Vert_{L^q(\Omega_1)} \le
    |\Omega|^{1/q} \times \#\calP\times\#\calE^* \times \veps_{\text{max}}.
  \end{align}
  Furthermore, the threshold error in $u$ is bounded by
  \begin{equation}
    \label{eq:u-thresh-error}
    \Vert u^L - u^{L,\veps} \Vert_{L^q(\Omega)} \le C \, |\Omega |^{1/q}\,  W^{-1}_{L,q'}\,\veps_{\text{max}}
  \end{equation}
  for the constant $C:=\sqrt{\#\calP\times\#\calE^*}$  and
  $
    W_{L,q'} := \min_{\lambda\in\calI^2_{L-1}} \Vert p_\xi \Vert_{L^{q'}(V^2_{\lambda})}
  $.
\end{theorem}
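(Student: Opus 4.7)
The plan is to bound both error quantities by estimating each discarded detail in the telescopic decomposition $u^L - u^{L,\veps} = \sum_{l=0}^{L-1}\sum_{\iblambda \in \calI_l \setminus \calD_{L,\veps}} d^l_\iblambda$ cell by cell and then summing using the threshold condition \eqref{eq:loc-thresh-cond-expectation}. The core ingredient is a cell-wise relation between the local norm and the $L^q(V_\iblambda)$-norm. Using the tensor-product orthonormal basis from Section~\ref{subseq:MRA-product}, the definition of $\|\cdot\|_\iblambda$, Cauchy--Schwarz on the coefficient sum, and the embedding $L^2(V_\iblambda)\hookrightarrow L^q(V_\iblambda)$ for $q\in\{1,2\}$, I obtain the tight form $\|d^l_\iblambda\|_{L^q(V_\iblambda)} \le \sqrt{\#\calP\cdot\#\calE^*}\,|V_\iblambda|^{1/q}\|d^l_\iblambda\|_\iblambda$ that produces the constant $C$ of \eqref{eq:u-thresh-error}, together with a looser bound $\le \#\calP\cdot\#\calE^*\,|V_\iblambda|^{1/q}\|d^l_\iblambda\|_\iblambda$ (obtained directly from $\sum_{\ibi,\ibe} |d_{l,\iblambda,\ibi,\ibe}| \le N\max_{\ibi,\ibe}|d_{l,\iblambda,\ibi,\ibe}|$) that yields the constant in \eqref{eq:thres-error-expectation-I}.

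For the expectation bound, $|\E[u^L]-\E[u^{L,\veps}]| \le \E[|u^L - u^{L,\veps}|]$ follows from monotonicity, so it suffices to estimate the second $L^q(\Omega_1)$-norm. Applying the triangle inequality over $l$ reduces matters to controlling $\|\sum_{\iblambda}\E[|d^l_\iblambda|]\|_{L^q(\Omega_1)}$ on each level. Since $\E[|d^l_\iblambda|]$ is supported on $V^1_{\lambda_1}$, H\"older in $\bx_2$ combined with Fubini gives the cell-wise estimate $\|\E[|d^l_\iblambda|]\|_{L^q(\Omega_1)} \le \|p_\xi\|_{L^{q'}(V^2_{\lambda_2})}\|d^l_\iblambda\|_{L^q(V_\iblambda)} \le N|V_\iblambda|^{1/q}\|p_\xi\|_{L^{q'}(V^2_{\lambda_2})}\veps_{\iblambda,L,q}$ for $\iblambda \notin \calD_{L,\veps}$. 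Factoring out $\max_\iblambda(\veps_{\iblambda,L,q}\|p_\xi\|_{L^{q'}(V^2_{\lambda_2})})$ and using $\sum_\iblambda|V_\iblambda|\le|\Omega|$ bounds the level-wise contribution by $|\Omega|^{1/q}\cdot \#\calP\cdot\#\calE^*\cdot \max_\iblambda(\veps_{\iblambda,L,q}\|p_\xi\|_{L^{q'}(V^2_{\lambda_2})})$, and the assumption \eqref{eq:loc-thresh-cond-expectation} then closes the bound.

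For the second estimate \eqref{eq:u-thresh-error}, the cells $\{V_\iblambda\}_{\iblambda\in\calI_l}$ partition $\Omega$ on each level, so the triangle inequality over $l$ together with disjointness yields $\|\sum_\iblambda d^l_\iblambda\|_{L^q(\Omega)}^q=\sum_\iblambda\|d^l_\iblambda\|_{L^q(V_\iblambda)}^q$. Combining the tighter cell-wise bound, pulling the maximum of $\|d^l_\iblambda\|_\iblambda$ outside, and using $\sum|V_\iblambda|\le|\Omega|$ gives $\sqrt{N}|\Omega|^{1/q}\max_\iblambda\veps_{\iblambda,L,q}$ per level. To relate this maximum to the threshold data, I would use the monotonicity $W_{L,q'} \le \|p_\xi\|_{L^{q'}(V^2_{\lambda_2})}$ for every $\lambda_2\in\calI^2_l$, $l\le L-1$, which holds because each $V^2_{\lambda_2}$ is a union of finest-level cells; hence $\max_\iblambda\veps_{\iblambda,L,q}\le W_{L,q'}^{-1}\max_\iblambda(\veps_{\iblambda,L,q}\|p_\xi\|_{L^{q'}(V^2_{\lambda_2})})$, and summing over $l$ together with \eqref{eq:loc-thresh-cond-expectation} produces the claimed factor $W_{L,q'}^{-1}\veps_{\text{max}}$.

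The main technical difficulty lies in the expectation bound for $q = 2$. Although the supports $V^1_{\lambda_1}$ of $\E[|d^l_\iblambda|]$ are disjoint across $\lambda_1$, for fixed $\lambda_1$ and varying $\lambda_2$ they all coincide with $V^1_{\lambda_1}$, so the $L^2(\Omega_1)$-norm of the sum over $\iblambda\in\calI_l$ does not split neatly into squared cell contributions. The necessary care is to apply H\"older cell by cell in $\bx_2$ so that each $\|p_\xi\|_{L^{q'}(V^2_{\lambda_2})}$ factor remains localized; a global Cauchy--Schwarz in $\bx_2$ would instead produce the non-local $\|p_\xi\|_{L^{q'}(\Omega_2)}$, which cannot be absorbed into the per-cell threshold condition \eqref{eq:loc-thresh-cond-expectation} without reintroducing the $W_{L,q'}^{-1}$ factor.
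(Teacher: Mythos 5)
Your treatment of the $q=1$ case of \eqref{eq:thres-error-expectation-I} and of the bound \eqref{eq:u-thresh-error} is correct and follows essentially the same route as the paper: the same telescoping sum over discarded details, the same cell-wise combination of H\"older in $\bx_2$ with the definition of the local norm (yielding the factor $\#\calP\cdot\#\calE^*$ from the coefficient count and $|V_\iblambda|$ from the normalization), and the same monotonicity argument $\Vert p_\xi\Vert_{L^{q'}(V)}\ge W_{L,q'}$ on nested cells; the only cosmetic difference is that you re-derive the $L^q(\Omega)$ bound directly with the explicit constant $\sqrt{\#\calP\cdot\#\calE^*}$, whereas the paper invokes the general estimate \eqref{eq:threshold-error-gerhard} after verifying \eqref{eq:loc-thres} for the rescaled thresholds.

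The substantive point is your final paragraph on $q=2$, and there you have correctly located a gap that the paper itself papers over with ``for $q=2$ the assertion holds with similar arguments.'' Your diagnosis of the obstruction is right: for fixed $\lambda_1$ the functions $\E[|d^l_{(\lambda_1,\lambda_2)}|]$ all live on the same cell $V^1_{\lambda_1}$, so the level-wise $L^2(\Omega_1)$-norm does not decompose into squared cell contributions. However, the remedy you propose (keeping H\"older local in $\bx_2$) does not by itself close the argument: after the local H\"older step you are still left, on each level, with a plain triangle inequality over $\lambda_2$, which produces $\sum_{\lambda_2}|V^2_{\lambda_2}|^{1/2}\approx(\#\calI^2_l)^{1/2}|\Omega_2|^{1/2}$ rather than $|\Omega_2|^{1/2}$, and this factor grows with the level. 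So your claimed level-wise bound $|\Omega|^{1/2}\cdot\#\calP\cdot\#\calE^*\cdot\max_\iblambda(\veps_{\iblambda,L,2}\Vert p_\xi\Vert_{L^{2}(V^2_{\lambda_2})})$ is not justified by the steps you give, and an additional idea (e.g., a weighted Cauchy--Schwarz across the $\lambda_2$-sum, or interpolation between the $q=1$ and $q=\infty$ cases with compatible threshold choices) is needed to obtain \eqref{eq:thres-error-expectation-I} for $q=2$. Since the paper only writes out $q=1$, your proposal matches everything the paper actually proves; for $q=2$ both your sketch and the paper's assertion are incomplete in the same place.
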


\begin{proof}
  Since thresholding is only performed on the details but not on the single-scale coefficients,   the threshold error can be written as
  $
    u^L- u^{L,\veps} =
    \sum_{l=0}^{L-1} \sum_{\iblambda\in \calI_l\backslash \calD_{L,\veps} }d^l_\iblambda .
  $
  The local details $d^l_\iblambda\in W_{l,\iblambda}$ can be expanded in terms of the local wavelet basis, i.e.,
  $
    d^l_\iblambda = \sum_{\ibi\in\calP}\sum_{\ibe\in\calE^*} d_{l,\iblambda,\ibi,\ibe} \psi_{l,\iblambda,\ibi,\ibe}.
  $
  Thus,  the threshold error can be estimated by
  \[
    \|u^L- u^{L,\veps}\|_{L^q(\Omega)} \le
    \sum_{l=0}^{L-1} \sum_{\iblambda\in \calI_l\backslash \calD_{L,\veps} }
    \sum_{\ibi\in\calP}\sum_{\ibe\in\calE^*} |d_{l,\iblambda,\ibi,\ibe}|\,  \|\psi_{l,\iblambda,\ibi,\ibe}\|_{L^q(\Omega)} =  \sum_{l=0}^{L-1} \sum_{\lambda\in\calM_{l,\veps}} |d_{l,\lambda}|\,  \|\psi_{l,\lambda}\|_{L^q(\Omega)}
  \]
  with the set of non-significant details on level $l$ defined as
  \begin{align*}
    \calM_{l,\veps} :=
    \{ \lambda= (\blambda,\bi,\be) \,:\,
    \blambda =(\lambda_1,\lambda_2)\in (\calI^1_l\times \calI^2_l)\backslash \calD_{L,\veps},\
    \bi=(\bi_1,\bi_2)\in\calP_1\times\calP_2,\
    \be=(e_1,e_2)\in\calE^*
    \} .
  \end{align*}

  To investigate the error in the expectation we have to exploit the basis expansion of $d_\blambda^l$ in the $L^q(\Omega_1)$-norm separately for $q=1$ and $q=2$. We show here only the case for $q=1$, for $q=2$ the assertion holds with similar arguments.

  For the expectation in the $L^1(\Omega_1)$-norm we directly conclude by linearity of the expectation
  \begin{align*}
    \Vert \E[u^L- u^{L,\veps}] \Vert_{L^1(\Omega_1)} \le
    \sum_{l=0}^{L-1} \sum_{\lambda\in\calM_{l,\veps}} |d_{l,\lambda}|\,
    \Vert \E[|\psi_{l,\lambda}|]\Vert_{L^1(\Omega_1)}.
  \end{align*}
  The expectation of the modulus of the wavelet functions can be estimated employing separation of variables
  \begin{align*}
    \E[|\psi_{l,\lambda}|](\bx_1) =
    \int_{\Omega_2}   |\psi_{l,\iblambda,\ibi,\ibe}(\bx_1,\bx_2)| p_\xi(\bx_2) \dif\bx_2 =
    |\psi^1_{l,\lambda_1,\ibi_1,e_1}(\bx_1)| (|\psi^2_{l,\lambda_2,\ibi_2,e_2}|, p_\xi )_{L^2(\Omega_2)}.
  \end{align*}
  Furthermore, by the Cauchy-Schwarz  inequality and the support of the wavelet functions it holds
  \begin{align*}
     & \Vert \psi^1_{l,\lambda_1,\ibi_1,e_1} \Vert_{L^1(\Omega_1)}
    \le \Vert \psi^1_{l,\lambda_1,\ibi_1,e_1} \Vert_{L^2(\Omega_1)} \sqrt{|V^1_{\lambda_1}|}, \\
     & ( |\psi^2_{l,\lambda_2,\ibi_2,e_2}| , p_\xi)_{L^2(\Omega_2)}  \le
    \Vert \psi^2_{l,\lambda_2,\ibi_2,e_2} \Vert_{L^2(\Omega_2)}
    \Vert p_\xi \Vert_{L^2(V^2_{\lambda_2})} \le
    \Vert \psi^2_{l,\lambda_2,\ibi_2,e_2} \Vert_{L^2(\Omega_2)}
    \Vert p_\xi \Vert_{L^\infty(V^2_{\lambda_2})}  \sqrt{|V^2_{\lambda_2}|} .
  \end{align*}
  This yields
  \begin{align*}
    \Vert \E[|\psi_{l,\lambda}|] \Vert_{L^1(\Omega_1)} & =
    \Vert \psi^1_{l,\lambda}\Vert_{L^1(\Omega_1)} (|\psi^2_{l,\lambda_2,\ibi_2,e_2}|, p_\xi )_{L^2(\Omega_2)}                                              \\
                                                       & \le
    \Vert \psi^1_{l,\lambda_1,\ibi_1,e_1} \Vert_{L^2(\Omega_1)} \sqrt{|V^1_{\lambda_1}|}\,
    \Vert \psi^2_{l,\lambda_2,\ibi_2,e_2} \Vert_{L^2(\Omega_2)}
    \Vert p_\xi \Vert_{L^\infty(V^2_{\lambda_2})}  \sqrt{|V^2_{\lambda_2}|}                                                                                \\
                                                       & = \|\psi_{l,\lambda}\|_{L^2(\Omega)}\, \sqrt{|V_\lambda|}\, \|p_\xi\|_{L^\infty(V^2_{\lambda_2})}
  \end{align*}
  using Fubini on the tensor product of the bases on each cell $V_\lambda$. Combining the above estimates we conclude with
  \begin{align*}
     & \Vert \E[|u^L- u^{L,\veps}|] \Vert_{L^1(\Omega_1)} \le
    \sum_{l=0}^{L-1} \sum_{\lambda\in\calM_{l,\veps}} |d_{l,\lambda}|\,   \Vert \psi_{l,\lambda} \Vert_{L^2(\Omega)}\, \sqrt{|V_\lambda|} \, \Vert p_\xi \Vert_{L^\infty(V^2_{\lambda_2})}                                                \\
     & = \sum_{l=0}^{L-1} \sum_{\iblambda=(\lambda_1,\lambda_2)\in (\calI^1_l\times \calI^2_l)\backslash \calD_{L,\veps} } \sum_{\ibi=(\ibi_1,\ibi_2)\in\calP_1\times\calP_2}\sum_{\ibe=(e_1,e_2)\in\calE^*} |d_{l,\iblambda,\ibi,\ibe} |
    \Vert \psi_{l,\iblambda,\ibi,\ibe} \Vert_{L^2(\Omega)}\, \sqrt{|V_\iblambda|} \, \Vert p_\xi \Vert_{L^\infty(V^2_{\lambda_2})} .
  \end{align*}
  Applying the definition of the local norm \eqref{eq:loc-norm-expectation} we obtain for $\blambda\in \calI_l$
  \begin{align*}
    |d_{l,\iblambda,\ibi,\ibe} |
    \Vert \psi_{l,\iblambda,\ibi,\ibe} \Vert_{L^2(\Omega)} \, \sqrt{|V_\iblambda|}\le
    \Vert d^l_\iblambda \Vert_{\iblambda} |V_\iblambda| .
  \end{align*}
  Using the local threshold value $\varepsilon_{\blambda, L, 1}$, non-significant details can be estimated based on assumption \eqref{eq:significant_details} by
  $
    \Vert d^l_\iblambda \Vert_\iblambda \le \veps_{\iblambda,L,1}
  $
  and by definition of the discretization it holds
  $
    \sum_{\iblambda\in \calI_l} |V_{\iblambda}| = |\Omega|,\,l=0,\ldots, L.
  $
  Then the error can be further estimated by
  \begin{align*}
     & \Vert \E[|u^L - u^{L,\veps}|] \Vert_{L^1(\Omega_1)}                                                                                                                                                                                                   \\
     & \le \sum_{l=0}^{L-1} \sum_{\iblambda=(\lambda_1,\lambda_2)\in (\calI^1_l\times \calI^2_l)\backslash \calD_{L,\veps} } \sum_{\ibi=(\ibi_1,\ibi_2)\in\calP_1\times\calP_2}\sum_{\ibe=(e_1,e_2)\in\calE^*} \veps_{\iblambda,L,1} |V_{\iblambda}|
    \Vert p_\xi \Vert_{L^{\infty}(V^2_{\lambda_2})}                                                                                                                                                                                                          \\
     & \le \sum_{l=0}^{L-1} \sum_{\iblambda=(\lambda_1,\lambda_2)\in (\calI^1_l\times \calI^2_l) }
    \sum_{\ibi=(\ibi_1,\ibi_2)\in\calP_1\times\calP_2}\sum_{\ibe=(e_1,e_2)\in\calE^*} \veps_{\iblambda,L,1} |V_{\iblambda}| \Vert p_\xi \Vert_{L^{\infty}(V^2_{\lambda_2})}                                                                                  \\
     & \le |\Omega|\times \#\calP\times\#\calE^* \sum_{l=0}^{L-1} \max_{\iblambda=(\lambda_1,\lambda_2)\in\calI_l}\veps_{\iblambda,L,1} \Vert p_\xi \Vert_{L^{\infty}(V^2_{\lambda_2})}\leq  |\Omega|\times \#\calP\times\#\calE^* \times \veps_{\text{max}}
  \end{align*}
  using assumption \eqref{eq:loc-thresh-cond-expectation} with $q=1$, thus, $q'=\infty$.

  Finally, to investigate the threshold error in $u$  we may apply \eqref{eq:threshold-error-gerhard}.
  For this purpose, we have to verify the condition \eqref{eq:loc-thres} on the local threshold values:
  \begin{align*}
    \sum_{l=0}^{L-1} \max_{\iblambda\in \calI_l} \veps_{\iblambda,L,q} \le
    \sum_{l=0}^{L-1} \max_{\iblambda\in \calI_l} \veps_{\iblambda,L,q} \Vert p_\xi \Vert_{L^{q'}(V^2_{\lambda_2})}/ \min_{\lambda\in \calI^2_l}\Vert p_\xi \Vert_{L^{q'}(V^2_{\lambda})}
    \le \veps_{\text{max}}/\min_{\lambda\in \calI^2_l,l=0,\ldots,L-1}\Vert p_\xi \Vert_{L^{q'}(V^2_{\lambda})}.
  \end{align*}
  Since the grids are nested and
  $ \Vert p_\xi \Vert_{L^{q'}(V)} \le \Vert p_\xi \Vert_{L^{q'}(V')} $  for $V\subset V'\subset \Omega_2$ it holds
  \begin{align*}
    \min_{\lambda\in \calI^2_l,l=0,\ldots,L-1}\Vert p_\xi \Vert_{L^{q'}(V^2_{\lambda})} =  \min_{\lambda\in \calI^2_{L-1}} \Vert p_\xi \Vert_{L^{q'}(V^2_{\lambda})} = W_{L,q'}.
  \end{align*}
  From this we finally conclude \eqref{eq:u-thresh-error} for the threshold error.
  \qed

\end{proof}

\begin{remark}{(Choice of local threshold value)}
  For a dyadic (Cartesian) grid hierarchy  we choose
  \begin{align}
    \label{eq:locthreshval}
    \veps_{\iblambda,L,q} = \frac{h_L}{h_l} \Vert p_\xi \Vert^{-1}_{L^{q'}(V^2_{\lambda_2})}  \veps_{\text{max}},\ \blambda=(\lambda_1,\lambda_2)\in\calI^1_l\times \calI^2_l,
  \end{align}
  as local threshold value where $h_l$ denotes the uniform diameter of the cells on level $l$. If $p_\xi$ is locally small, then the local threshold  value becomes very large and large details can be neglected without significantly contributing to the threshold error of the expectation $\E$ whereas the threshold error might be large for $u$.
  This will be the key ingredient to improve the efficiency of the adaptive scheme in Section \ref{sec:NumRes}.
  \begin{enumerate}
    \item
          To ensure uniform boundedness of the error in the expectation we have to verify the sufficient condition \eqref{eq:loc-thresh-cond-expectation}. Due to dyadic grid refinement it holds $h_L/h_l=a^{l-L}$ for $a>1$.
          Then \eqref{eq:loc-thresh-cond-expectation} holds because
          \begin{align*}
            \sum_{l=0}^{L-1} \max_{\iblambda\in\calI_l} \left(\veps_{\iblambda,L,q} \Vert p_\xi \Vert_{L^{q'}(V^2_{\lambda_2})} \right) = \veps_{\text{max}}\,\sum_{l=0}^{L-1}\frac{h_L}{h_l} = \veps_{\text{max}}\,\sum_{l=1}^{L}\left(\frac{1}{a}\right)^l
            \le \veps_{\text{max}} .
          \end{align*}
    \item
          According to \eqref{eq:u-thresh-error} the threshold error in $u$ is bounded by $ C \, |\Omega |^{1/q}\,  W^{-1}_{L,q'}\,\veps_{\text{max}}$
          where $W_{L,q'}$ depends on $L$. This is not admissible from the asymptotic point of view. However, in case of $q=1$, $q'=\infty$ it holds
          \begin{align*}
            \Vert p_\xi \Vert_{L^{q'}(V)} \ge \min_{\bx_2\in\Omega_2} |p_\xi(\bx_2)| >0,\qquad \forall\,V\subset\Omega_2,
          \end{align*}
          and the factor $W_{L,q'}$ as well as the right-hand side in  \eqref{eq:u-thresh-error} are independent of $L$.
  \end{enumerate}
\end{remark}

To estimate the error for the higher order centralized moments induced by the threshold error of the underlying DG approximation we derive estimates for the expectation.
Since the entropy solution of a scalar conservation law in multidimensions satisfies a maximum principle, we may confine our stability investigation of the error for the expectation and the centralized $k$-th moments to functions $u\in L^\infty(\Omega)$. Assuming that $\Omega$ is a bounded domain, it also holds $u\in L^2(\Omega)$. Then the projection $u^L\in S_L$ of $u$ onto $S_L$ is uniquely defined.
In practice, $u^L(t,\cdot)$ will be the DG approximation for a fixed time $t\in (0,T)$ that is assumed to converge to the entropy solution $u(t,\cdot)$, i.e.,
  $\Vert u^L(t, \cdot) - u(t, \cdot) \Vert_{L^1(\Omega)} \to 0$, $L\to\infty$ where $u_L(t,\cdot)$ and $u(t,\cdot)$ are uniformly bounded. Note that in this case $u^L(t,\cdot) \ne P_{S_L}(u(t,\cdot))$, i.e., $u^L(t,\cdot)$ is not the projection of $u(t,\cdot)$ onto $S_L$.

\begin{lemma}{(Estimates for expectation)}
  \label{la:estimates-expectation}
  Let be $u\in L^\infty(\Omega)$ and $p_\xi\in L^\infty(\Omega_2)$. Then it holds for $k\in\N$:
  \begin{align}
    \label{eq:estimate-expectation-1}
     & \Vert \E[u^k] \Vert_{L^q(\Omega_1)} \le
    \Vert p_\xi \Vert_{L^\infty(\Omega_2)}\,\Vert u^k \Vert_{L^q(\Omega)}  ,\quad q\in[1,\infty)                                                              \\
    \label{eq:estimate-expectation-2}
     & \Vert \E[u^k] \Vert_{L^1(\Omega_1)} \le  \E[\Vert u^k \Vert_{L^1(\Omega_1)}] \le \Vert p_\xi \Vert_{L^\infty(\Omega_2)}\,\Vert u \Vert^k_{L^k(\Omega)} \\
    \label{eq:estimate-expectation-3}
     & \Vert \E[u^k] \Vert_{L^\infty(\Omega_1)} \le  \Vert u \Vert^k_{L^\infty(\Omega)}
    \\
    \label{eq:estimate-expectation-1b}
     & \Vert \E^k[u] \Vert_{L^q(\Omega_1)} \le
    \Vert p_\xi \Vert^k_{L^\infty(\Omega_2)} \Vert u^k \Vert_{L^q(\Omega)},\quad q\in[1,\infty)
    \\
    \label{eq:estimate-expectation-1c}
     & \Vert \E^k[u] \Vert_{L^\infty(\Omega_1)} \le  \Vert u \Vert^k_{L^\infty(\Omega)}.
  \end{align}
\end{lemma}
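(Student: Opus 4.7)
The plan is to prove all five bounds by combining elementary tools: Jensen's inequality (exploiting that $p_\xi\,d\bx_2$ is a probability measure on $\Omega_2$), the layer-cake estimate $p_\xi(\bx_2)\le\|p_\xi\|_{L^\infty(\Omega_2)}$, and Fubini's theorem to interchange integration over $\Omega_1$ and $\Omega_2$. Each estimate can be reduced to a pointwise bound in $\bx_1\in\Omega_1$ which is then integrated (or supremized) over $\Omega_1$.

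First I would dispose of the $L^\infty$-bounds \eqref{eq:estimate-expectation-3} and \eqref{eq:estimate-expectation-1c}, which are the simplest. Since $p_\xi\ge 0$ and $\int_{\Omega_2} p_\xi\,d\bx_2 = 1$, pulling $\|u\|_{L^\infty(\Omega)}$ out of the integral defining $\E[u^k](\bx_1)$ gives $|\E[u^k](\bx_1)|\le \|u\|_{L^\infty(\Omega)}^k$ uniformly in $\bx_1$, hence \eqref{eq:estimate-expectation-3}; an analogous argument for $\E[u]$ followed by raising to the $k$-th power yields \eqref{eq:estimate-expectation-1c}. Next, \eqref{eq:estimate-expectation-2} is a direct Fubini computation: writing $\|\E[u^k]\|_{L^1(\Omega_1)}\le \int_{\Omega_1}\int_{\Omega_2}|u^k(\bx_1,\bx_2)|\,p_\xi(\bx_2)\,d\bx_2\,d\bx_1$ gives both the middle identification with $\E[\|u^k\|_{L^1(\Omega_1)}]$ (by swapping the order) and, after bounding $p_\xi$ by its sup-norm, the bound by $\|p_\xi\|_{L^\infty(\Omega_2)}\|u\|_{L^k(\Omega)}^k$ (with $\|u^k\|_{L^1(\Omega)}=\|u\|_{L^k(\Omega)}^k$).

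The key inequality is \eqref{eq:estimate-expectation-1}, from which \eqref{eq:estimate-expectation-1b} will follow by applying Jensen once more. For \eqref{eq:estimate-expectation-1} with $q\in[1,\infty)$, I would apply Jensen's inequality with the probability measure $p_\xi\,d\bx_2$ to the convex function $t\mapsto|t|^q$, obtaining
\begin{align*}
|\E[u^k](\bx_1)|^q \;\le\; \Bigl(\int_{\Omega_2}|u(\bx_1,\bx_2)|^k p_\xi(\bx_2)\,d\bx_2\Bigr)^q \;\le\; \int_{\Omega_2}|u(\bx_1,\bx_2)|^{kq} p_\xi(\bx_2)\,d\bx_2 .
\end{align*}
Bounding $p_\xi$ by $\|p_\xi\|_{L^\infty(\Omega_2)}$ and integrating over $\Omega_1$ via Fubini yields the claim. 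For \eqref{eq:estimate-expectation-1b}, the pointwise Jensen estimate $|\E[u](\bx_1)|^k \le \E[|u|^k](\bx_1)$ reduces matters to \eqref{eq:estimate-expectation-1} applied to $|u|$; the factor $\|p_\xi\|_{L^\infty(\Omega_2)}^k$ in the statement then appears as an a fortiori bound (harmless whenever $\|p_\xi\|_{L^\infty}\ge 1$, which is the interesting regime where $p_\xi$ is sharply localized).

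The main obstacle I anticipate is the sharpness of the exponent on $\|p_\xi\|_{L^\infty(\Omega_2)}$ in \eqref{eq:estimate-expectation-1}: a naive Jensen plus $p_\xi\le\|p_\xi\|_\infty$ only delivers $\|p_\xi\|_\infty^{1/q}$, while the claimed bound has $\|p_\xi\|_\infty^1$. The cleanest way around this is to skip Jensen and instead bound $|\E[u^k](\bx_1)|\le \|p_\xi\|_{L^\infty(\Omega_2)}\,\|u^k(\bx_1,\cdot)\|_{L^1(\Omega_2)}$, then appeal to Minkowski's integral inequality in the $\bx_1$-direction. Either route, together with the elementary steps above, covers all five inequalities; the only subtlety is keeping track of which probability-measure argument (Jensen vs.\ normalization $\int p_\xi=1$) is needed in each case so that the correct $L^\infty$ factor on $p_\xi$ emerges.
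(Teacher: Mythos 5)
Your handling of \eqref{eq:estimate-expectation-2}, \eqref{eq:estimate-expectation-3} and \eqref{eq:estimate-expectation-1c} coincides with the paper's proof (Fubini plus $p_\xi\le\Vert p_\xi\Vert_{L^\infty}$ for the first, the normalization $\Vert p_\xi\Vert_{L^1(\Omega_2)}=1$ for the two sup-bounds). For \eqref{eq:estimate-expectation-1} and \eqref{eq:estimate-expectation-1b} your route is genuinely different: you apply Jensen with respect to the probability measure $p_\xi\,\dif\bx_2$ and then extract a single factor of $\Vert p_\xi\Vert_{L^\infty}$, whereas the paper keeps $p_\xi$ \emph{inside} the power, writing
\[
\Bigl|\int_{\Omega_2} u^k p_\xi\,\dif\bx_2\Bigr|^q\le\int_{\Omega_2}|u^k p_\xi|^q\,\dif\bx_2,
\qquad
\Bigl|\int_{\Omega_2} u\, p_\xi\,\dif\bx_2\Bigr|^{kq}\le\int_{\Omega_2}|u\, p_\xi|^{kq}\,\dif\bx_2,
\]
and only then pulls out $\Vert p_\xi\Vert^q_{L^\infty}$ resp.\ $\Vert p_\xi\Vert^{kq}_{L^\infty}$; this is how the stated exponents $1$ and $k$ arise. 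The obstacle you flag is real, and neither your Minkowski fallback nor the paper's step removes it for free: the displayed inequalities are Jensen/H\"older with respect to \emph{normalized Lebesgue measure} on $\Omega_2$ and carry a hidden factor $|\Omega_2|^{q-1}$ (your Minkowski--H\"older route produces the same factor as $|\Omega_2|^{1/q'}$). Indeed, for $|\Omega_2|>1$ and $q>1$ the stated constant is not attainable: with $u\equiv1$ and $p_\xi\equiv|\Omega_2|^{-1}$ the left-hand side of \eqref{eq:estimate-expectation-1} is $|\Omega_1|^{1/q}$ while the right-hand side is $|\Omega_1|^{1/q}|\Omega_2|^{1/q-1}<|\Omega_1|^{1/q}$. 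So your unconditional bounds (with $\Vert p_\xi\Vert_{L^\infty}^{1/q}$ in \eqref{eq:estimate-expectation-1} and $\Vert p_\xi\Vert_{L^\infty}$ in \eqref{eq:estimate-expectation-1b}) imply the stated ones exactly when $\Vert p_\xi\Vert_{L^\infty}\ge1$, and the paper's own argument needs $|\Omega_2|\le1$ (which forces $\Vert p_\xi\Vert_{L^\infty}\ge1$ and holds in all of its applications, where $\Omega_2=[0,1]$). In short, your argument is sound and your constants are the sharp ones; the discrepancy you worried about traces back to the lemma's statement, not to a defect in your proof.
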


\begin{lemma}
  \label{la:error-differences-expectation-moments}
  Let be $u,v\in L^\infty(\Omega)$. Assuming that the probability density function is uniformly bounded, i.e., $p_\xi\in L^\infty(\Omega_2)$, then it holds for $q\in [1,\infty]$ and all $k\in\N$:
  \begin{align}
    \label{eq:error-expectation-k-1}
    \Vert \E[u^k] - \E[v^k] \Vert_{L^q(\Omega_1)} & \le
    k\,(M(u,v))^{k-1} \, \Vert \E[|u-v|] \Vert_{L^q(\Omega_1)} , \\
    \label{eq:error-expectation-k-2}
    \Vert \E^k[u] - \E^k[v] \Vert_{L^q(\Omega_1)} & \le
    k\,(M_\E(u,v))^{k-1} \, \Vert \E[u-v] \Vert_{L^q(\Omega_1)} \le
    k\,(M(u,v))^{k-1} \, \Vert \E[|u-v|] \Vert_{L^q(\Omega_1)}
  \end{align}
  with
  \begin{align}
    \label{eq:M-E}
    M_\E(u,v) & := \max\{ \Vert \E[u] \Vert_{L^\infty(\Omega_1)}, \Vert \E[v] \Vert_{L^\infty(\Omega_1)} \}\le \max\{ \Vert u \Vert_{L^\infty(\Omega)}, \Vert v \Vert_{L^\infty(\Omega)} \} =: M(u,v).
  \end{align}
\end{lemma}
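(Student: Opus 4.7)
\textbf{Proof plan for Lemma \ref{la:error-differences-expectation-moments}.}
The plan is to reduce both inequalities to the elementary algebraic identity
\[
a^k-b^k=(a-b)\sum_{j=0}^{k-1}a^{k-1-j}b^{j},
\]
which gives the pointwise bound $|a^k-b^k|\le k\,\max(|a|,|b|)^{k-1}\,|a-b|$ for any real $a,b$ and any $k\in\N$. This bound will be applied twice: once to $(a,b)=(u(\bx),v(\bx))$ for the first estimate, and once to $(a,b)=(\E[u](\bx_1),\E[v](\bx_1))$ for the second.

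For \eqref{eq:error-expectation-k-1}, I first apply the pointwise bound on $\Omega$: using that $u,v\in L^\infty(\Omega)$ and $M(u,v)$ from \eqref{eq:M-E} is an $L^\infty$-majorant of both, one has $|u^k-v^k|\le k\,M(u,v)^{k-1}|u-v|$ a.e. Taking expectations (monotonicity and linearity, noting $p_\xi\ge 0$) gives
\[
|\E[u^k]-\E[v^k]|=|\E[u^k-v^k]|\le \E[|u^k-v^k|]\le k\,M(u,v)^{k-1}\,\E[|u-v|]
\]
pointwise in $\bx_1\in\Omega_1$, and taking the $L^q(\Omega_1)$-norm yields the claim.

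For \eqref{eq:error-expectation-k-2}, I apply the same algebraic inequality with $a=\E[u](\bx_1)$, $b=\E[v](\bx_1)$, using the $L^\infty(\Omega_1)$-majorant $M_\E(u,v)$:
\[
|\E^k[u]-\E^k[v]|\le k\,M_\E(u,v)^{k-1}\,|\E[u]-\E[v]|=k\,M_\E(u,v)^{k-1}\,|\E[u-v]|
\]
by linearity of the expectation. Taking the $L^q(\Omega_1)$-norm gives the first half of \eqref{eq:error-expectation-k-2}. The second inequality in \eqref{eq:error-expectation-k-2} then follows from two immediate comparisons: the bound $M_\E(u,v)\le M(u,v)$ already recorded in \eqref{eq:M-E} (which is itself an instance of \eqref{eq:estimate-expectation-3}), and the pointwise estimate $|\E[u-v]|\le \E[|u-v|]$, which after taking the $L^q(\Omega_1)$-norm chains the two halves together.

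There is no real obstacle here: the only subtlety is keeping straight which $L^\infty$-bound is being used where, so that $M_\E$ appears in the intermediate bound on $\Omega_1$ and $M$ in the final bound that dominates it; everything else is the triangle inequality, monotonicity of the integral against the nonnegative density $p_\xi$, and the elementary identity above.
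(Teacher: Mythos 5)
Your proposal is correct and follows essentially the same route as the paper's proof: both rest on the factorization $a^k-b^k=(a-b)\sum_j a^{k-1-j}b^j$ giving $|a^k-b^k|\le k\max\{|a|,|b|\}^{k-1}|a-b|$, applied once pointwise on $\Omega$ for \eqref{eq:error-expectation-k-1} and once to $\E[u](\bx_1),\E[v](\bx_1)$ on $\Omega_1$ for \eqref{eq:error-expectation-k-2}, followed by monotonicity of the expectation, the $L^q$-norm (with $\esssup$ for $q=\infty$), and the comparison $M_\E\le M$ from \eqref{eq:estimate-expectation-3}. No gaps.
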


The proofs of Lemma \ref{la:estimates-expectation} and Lemma \ref{la:error-differences-expectation-moments} are given in Appendix \ref{appendix:proof_lemmata}.
By means of these estimates we may now verify the following  stability result for the  expectation and the higher order moments.

\begin{lemma}{(Stability of expectation and higher order moments)}
  \label{la:stability-expectation-moments}
  Let be $u,v \in S_L$. Assuming that the probability density function is uniformly bounded, i.e., $p_\xi\in L^\infty(\Omega_2)$, then the differences in the expectation and the higher order moments for $k\in\N$ can be estimated in the $L^q$-norm, $q\in [1,\infty)$, by the differences of the functions $u$ and $v$:
  \begin{align}
    \label{eq:difference-moments-1}
     & \Vert \E[u] - \E[v] \Vert_{L^q(\Omega_1)}  \le
    \Vert p_\xi \Vert_{L^\infty(\Omega_2)}  \Vert u - v\Vert_{L^q(\Omega)},                        \\
    \label{eq:difference-moments-2}
     & \Vert \M^k[u] - \M^k[v] \Vert_{L^q(\Omega_1)} \le |\Omega|^{1/qp}\, c_k\left(u,v\right)  \,
    \max\{ \Vert p_\xi \Vert_{L^\infty(\Omega_2)}, \Vert p_\xi \Vert^k_{L^\infty(\Omega_2)}\}  \,
    \Vert \E[ |u - v| ] \Vert_{L^{qp'}(\Omega_1)},
  \end{align}
  where $p,p'\in [1,\infty]$ such that $1/p+1/p'=1$ and
  \begin{align}
    \label{eq:ck}
    c_k\left(u,v\right) :=
     & \sum_{j=0}^k  \binom{k}{j} (
    2k - j )\,M(u,v)^{2k-j-1},
  \end{align}
  where $M(u,v)$ is defined by \eqref{eq:M-E}.
  \\
  For the case $q=\infty$, we estimate the differences in the expectation and the higher order moments by:
  \begin{align*}
     & \Vert \E[u] - \E[v] \Vert_{L^\infty(\Omega_1)}  \le \Vert u - v\Vert_{L^\infty(\Omega)}, \\
     & \Vert \M^k[u] - \M^k[v] \Vert_{L^\infty(\Omega_1)} \le c_k\left(u,v\right)\,
    \Vert \E[ |u - v| ] \Vert_{L^{\infty}(\Omega_1)}.
  \end{align*}
  Here we set $|\Omega|^{1/\infty}=1$ for a convention.
\end{lemma}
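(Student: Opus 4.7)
The plan is to prove the two bounds separately. The expectation estimate follows in a few lines from linearity and Lemma \ref{la:estimates-expectation}, while the centralized-moment estimate requires a binomial expansion, a product-rule decomposition, and Hölder's inequality, after which the constants collapse into $c_k(u,v)$.

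For the expectation, linearity gives $\E[u]-\E[v]=\E[u-v]$ and pointwise $|\E[u-v]|\le\E[|u-v|]$. Taking $L^q(\Omega_1)$-norms and applying \eqref{eq:estimate-expectation-1} with $k=1$ to $|u-v|\in L^\infty(\Omega)$ yields the first stated inequality. The $L^\infty(\Omega_1)$ case is immediate from
$|\E[u-v]|\le\E[|u-v|]\le \|u-v\|_{L^\infty(\Omega)}\int_{\Omega_2}p_\xi\,d\bx_2=\|u-v\|_{L^\infty(\Omega)}$,
using that $p_\xi$ is a probability density.

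For the centralized $k$-th moment, the key step is the binomial expansion
\[
\M^k[u]=\sum_{j=0}^{k}\binom{k}{j}(-1)^{j}\E[u^{k-j}]\E[u]^{j},
\]
so that $\M^k[u]-\M^k[v]$ splits into $k+1$ product differences. Each summand is decomposed via the standard product-rule identity
\[
\E[u^{k-j}]\E[u]^{j}-\E[v^{k-j}]\E[v]^{j}=\E[u^{k-j}]\bigl(\E[u]^{j}-\E[v]^{j}\bigr)+\bigl(\E[u^{k-j}]-\E[v^{k-j}]\bigr)\E[v]^{j},
\]
and the generalized Hölder inequality $\|fg\|_{L^q(\Omega_1)}\le\|f\|_{L^{qp}(\Omega_1)}\|g\|_{L^{qp'}(\Omega_1)}$ is applied. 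The bounded factors $\E[u^{k-j}]$ and $\E[v]^{j}$ are estimated in $L^{qp}(\Omega_1)$ via \eqref{eq:estimate-expectation-1} and \eqref{eq:estimate-expectation-1b} combined with the trivial embedding $L^\infty\hookrightarrow L^{qp}$, which produces the factor $|\Omega_1|^{1/(qp)}\le|\Omega|^{1/(qp)}$ together with the various powers of $\|p_\xi\|_{L^\infty(\Omega_2)}$; the difference factors are bounded in $L^{qp'}(\Omega_1)$ via \eqref{eq:error-expectation-k-1} and \eqref{eq:error-expectation-k-2}. Summing over $j$ with the binomial weights, absorbing all $\|p_\xi\|_{L^\infty(\Omega_2)}^{\ell}$ for $1\le\ell\le k$ into $\max\{\|p_\xi\|_{L^\infty(\Omega_2)},\|p_\xi\|_{L^\infty(\Omega_2)}^{k}\}$, and collecting powers of $M(u,v)$ yields the stated bound with the closed-form constant $c_k(u,v)$. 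The $L^\infty(\Omega_1)$ version proceeds in the same way but with $\|fg\|_{L^\infty}\le\|f\|_{L^\infty}\|g\|_{L^\infty}$ replacing Hölder, so that the $|\Omega|^{1/(qp)}$ factor drops out by the stated convention $|\Omega|^{0}=1$.

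I expect the main obstacle to be the combinatorial bookkeeping needed to collapse the resulting double sum into the explicit closed form $c_k(u,v)=\sum_{j=0}^{k}\binom{k}{j}(2k-j)M(u,v)^{2k-j-1}$. Each $j$-term generates two subterms with different powers of $M(u,v)$ coming from the bounded factors $M(u,v)^{k-j}$, $M(u,v)^{j}$ and from the difference factors $(k-j)M(u,v)^{k-j-1}$, $jM(u,v)^{j-1}$, and these have to be assembled carefully without premature cancellation so as to match the precise exponent $2k-j-1$ and the coefficient $(2k-j)$. The individual component estimates themselves are one-line applications of the preceding lemmas, so once the decomposition above is fixed, the remaining work is careful summation.
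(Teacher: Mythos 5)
Your overall strategy is the one the paper uses: linearity plus \eqref{eq:estimate-expectation-1} for the expectation, then a binomial expansion of $\M^k$, a product-rule splitting, generalized H\"older with exponents $qp$ and $qp'$, the bounds \eqref{eq:estimate-expectation-1}, \eqref{eq:estimate-expectation-1b} for the undifferenced factors and \eqref{eq:error-expectation-k-1}, \eqref{eq:error-expectation-k-2} for the differences. The expectation part is fine. The problem sits exactly in the step you yourself flag as the ``main obstacle'': starting from your (standard) expansion $\M^k[u]=\sum_{j}\binom{k}{j}(-1)^{j}\E[u^{k-j}]\E[u]^{j}$, the two subterms of the $j$-th summand carry $M$-powers $M^{k-j}\cdot jM^{j-1}$ and $M^{j}\cdot(k-j)M^{k-j-1}$, so \emph{every} term has total exponent $k-1$; no bookkeeping can produce the exponent $2k-j-1$ appearing in \eqref{eq:ck}. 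Your route yields the constant $k\,2^{k}\,M(u,v)^{k-1}$ (times the $p_\xi$ factors), which is a perfectly sound stability constant but is \emph{not} dominated by $c_k(u,v)$ when $M(u,v)<1$ (as $M\to 0$ your constant behaves like $k2^kM^{k-1}$ while $c_k$ behaves like $kM^{k-1}$). The paper reaches $c_k$ because it expands $\M^k[u]$ as $\sum_{j}\binom{k}{j}\E[u^{k-j}]\,\E^k[u]$ --- the $k$-th power of the mean in every summand --- and then applies \eqref{eq:error-expectation-k-2} with exponent $k$, giving $M^{k}(k-j)M^{k-j-1}+M^{k-j}kM^{k-1}=(2k-j)M^{2k-j-1}$ per summand. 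So either adopt that expansion or accept a differently shaped constant; as written, your plan does not terminate in \eqref{eq:difference-moments-2} with the stated $c_k$.

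A second, smaller slip: you produce the volume factor from the embedding $L^\infty(\Omega_1)\hookrightarrow L^{qp}(\Omega_1)$ and then assert $|\Omega_1|^{1/(qp)}\le|\Omega|^{1/(qp)}$. Since $|\Omega|=|\Omega_1|\,|\Omega_2|$, this requires $|\Omega_2|\ge 1$, which is not assumed. The paper avoids this by applying \eqref{eq:estimate-expectation-1} and \eqref{eq:estimate-expectation-1b} directly with exponent $qp$ over $\Omega$, so that the factor arises as $\Vert u^k\Vert_{L^{qp}(\Omega)}\le|\Omega|^{1/(qp)}\Vert u\Vert^k_{L^\infty(\Omega)}$; you should do the same.
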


\begin{proof}~\\
  Due to the linearity of the expectation, inequality \eqref{eq:difference-moments-1} follows by  \eqref{eq:estimate-expectation-1}.
  The error of the $k$-th centralized moments is
  \begin{align*}
     & \Vert \M^k[u] - \M^k[v] \Vert_{L^q(\Omega_1)} =
    \left\Vert\sum_{j=0}^k  \binom{k}{j}  \Big(\E[u^{k-j}]\, \E^k[u] -
    \E[v^{k-j}]\, \E^k[v]\Big)\right\Vert_{L^q(\Omega_1)}                                    \\
     & \le \sum_{j=0}^k  \binom{k}{j} \left(
    \left\Vert \E^k[u]
    \left(\E[u^{k-j}] - \E[v^{k-j}]\, \right) \right\Vert _{L^q(\Omega_1)} +
    \left\Vert \E[v^{k-j}]
    \left(\E^k[u] - \E^k[v]\, \right) \right\Vert _{L^q(\Omega_1)}
    \right)                                                                                  \\
     & \le \sum_{j=0}^k  \binom{k}{j} \left(
    \left\Vert \E^k[u] \right\Vert _{L^{qp}(\Omega_1)}\,
    \left\Vert\E[u^{k-j}] - \E[v^{k-j}]\, \right\Vert _{L^{qp'}(\Omega_1)} \right. \nonumber \\
     & \hspace{20mm} \left. +
    \left\Vert \E[v^{k-j}] \right\Vert _{L^{qp}(\Omega_1)}\,
    \left\Vert\E^k[u] - \E^k[v]\, \right\Vert _{L^{qp'}(\Omega_1)}
    \right)                                                                                  \\
     & \le c_{k,qp}\left(u,v\right) \, \Vert \E[ |u - v| ] \Vert_{L^{qp'}(\Omega_1)}
  \end{align*}
  with
  \begin{align*}
     & c_{k,qp}\left(u,v\right) :=  \sum_{j=0}^k  \binom{k}{j} \left(
    \left\Vert \E^k[u] \right\Vert _{L^{qp}(\Omega_1)}\,
    (k-j)\,M(u,v)^{k-j-1}   \right.                                   \\
     & \hspace{40mm} \left. +
    \left\Vert \E[v^{k-j}] \right\Vert _{L^{qp}(\Omega_1)}\,
    k\,M(u,v)^{k-1}
    \right) .
  \end{align*}
  Then we can estimate the coefficients $c_{k,qp}$ by
  \begin{align*}
    c_{k,qp}\left(u,v\right) & \le
    \sum_{j=0}^k  \binom{k}{j} \left( |\Omega|^{1/qp}\,
    \Vert p_\xi \Vert^k_{L^\infty(\Omega_2)}\,  \Vert u \Vert^k_{L^\infty(\Omega)} \,
    (k-j)\,M(u,v)^{k-j-1}   \right.                                                                                                                         \\
                             & \hspace{20mm} \left. +\ |\Omega|^{1/qp}\,
    \Vert p_\xi \Vert_{L^\infty(\Omega_2)}\,  \Vert v \Vert^{k-j}_{L^\infty(\Omega)}\,
    k\,M(u,v)^{k-1} \,
    \right)                                                                                                                                                 \\
                             & \leq  |\Omega|^{1/qp}\,c_k(u,v)\, \max\{ \Vert p_\xi \Vert_{L^\infty(\Omega_2)}, \Vert p_\xi \Vert^k_{L^\infty(\Omega_2)}\}.
  \end{align*}
  For the case $q=\infty$, we analogously estimate $c_{k,\infty}$ by using \eqref{eq:estimate-expectation-3} and \eqref{eq:estimate-expectation-1c}.
  \qed
\end{proof}

From the stability result we may now state the main result on the error in  the higher order moments deduced from the error in the expectation.

\begin{theorem}{(Error in higher order moments)}
  \label{thm:thres-error-1-moments}
  Let be $u^L\in S_L$ and $u^{L,\veps} \in S_L$ its sparse approximation determined by applying thresholding to its multiscale decomposition such that
  \begin{align}
    \label{eq:pertub-error-exp}
    \Vert \E[|u^L - u^{L,\veps}|] \Vert_{L^q(\Omega_1)} \le
    C_\E\, \veps_{\text{max}},\qquad q\in\{1,2\}
  \end{align}
  with $C_\E$ independent of $L$ and $\veps_{\text{max}}$.
  Assuming that the probability density function is uniformly bounded, i.e., $p_\xi\in L^\infty(\Omega_2)$,
  then the error in the higher order moments for $k\in\N$ can be estimated by
  \begin{align*}
     & \Vert \M^k[u^L] - \M^k[u^{L,\veps}] \Vert_{L^q(\Omega_1)} \le c_k\left(u^L,u^{L,\veps}\right)\,
    \max\{ \Vert p_\xi \Vert_{L^\infty(\Omega_2)}, \Vert p_\xi \Vert^k_{L^\infty(\Omega_2)}\}
    \, C_\E\, \veps_{\text{max}}
  \end{align*}
  where the coefficients $c_k$ are defined by \eqref{eq:ck}.
\end{theorem}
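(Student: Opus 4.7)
The plan is essentially to invoke Lemma \ref{la:stability-expectation-moments} with a single, carefully chosen pair of conjugate exponents, since that lemma was tailored to produce exactly this kind of conclusion.

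First I would set $u := u^L$ and $v := u^{L,\veps}$ in inequality \eqref{eq:difference-moments-2}. Both lie in $S_L \subset L^\infty(\Omega)$ (finite-dimensional piecewise-polynomial spaces on the bounded domain $\Omega$), so the hypotheses of the stability lemma are met, and the assumption $p_\xi \in L^\infty(\Omega_2)$ carries over from the hypothesis of the present theorem. The stability estimate then yields
\begin{align*}
  \Vert \M^k[u^L] - \M^k[u^{L,\veps}] \Vert_{L^q(\Omega_1)} \le |\Omega|^{1/(qp)}\,c_k(u^L,u^{L,\veps})\,\max\{\Vert p_\xi \Vert_{L^\infty(\Omega_2)},\Vert p_\xi \Vert^k_{L^\infty(\Omega_2)}\}\,\Vert \E[|u^L - u^{L,\veps}|] \Vert_{L^{qp'}(\Omega_1)}
\end{align*}
for any conjugate pair $p,p' \in [1,\infty]$ with $1/p + 1/p' = 1$.

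Next I would match the right-hand norm to the hypothesis \eqref{eq:pertub-error-exp}: choose $p = \infty$ and $p' = 1$, so that $qp' = q \in \{1,2\}$, which is precisely the range in which the perturbation bound on the expectation is available. Under the paper's convention $|\Omega|^{1/\infty} = 1$, the geometric prefactor $|\Omega|^{1/(qp)}$ disappears. Plugging in \eqref{eq:pertub-error-exp} then gives
\begin{align*}
  \Vert \M^k[u^L] - \M^k[u^{L,\veps}] \Vert_{L^q(\Omega_1)} \le c_k(u^L,u^{L,\veps})\,\max\{\Vert p_\xi \Vert_{L^\infty(\Omega_2)},\Vert p_\xi \Vert^k_{L^\infty(\Omega_2)}\}\,C_\E\,\veps_{\text{max}},
\end{align*}
which is exactly the asserted estimate.

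I expect no real obstacle: the whole work was done in Lemma \ref{la:stability-expectation-moments}, and the present theorem is essentially a corollary. The only subtlety worth emphasising in the write-up is the choice $p=\infty$, $p'=1$, which is forced by the fact that \eqref{eq:pertub-error-exp} controls $\E[|u^L-u^{L,\veps}|]$ only in $L^q(\Omega_1)$ for $q \in \{1,2\}$, not in any higher $L^{qp'}$ norm; this choice simultaneously eliminates the $|\Omega|^{1/(qp)}$ factor, leading to the clean form of the bound. The constant $c_k(u^L,u^{L,\veps})$ remains uniformly controlled by \eqref{eq:ck} since $u^L$ and $u^{L,\veps}$ inherit a uniform $L^\infty$-bound from the DG approximation, which in turn inherits it from the $L^\infty$-bound on the entropy solution supplied by Theorems \ref{thm:entropy_solution} and \ref{thm:stochastic_entropy_solution}.
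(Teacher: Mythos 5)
Your proposal is correct and follows essentially the same route as the paper: both apply Lemma \ref{la:stability-expectation-moments} with the choice $p=\infty$, $p'=1$ (so that $qp'=q$ matches the range of \eqref{eq:pertub-error-exp} and the factor $|\Omega|^{1/(qp)}$ becomes $1$), after noting that $u^L, u^{L,\veps}\in S_L$ are bounded as piecewise polynomials on a bounded domain.
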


\begin{proof}
    Since $u^L,u^{L,\varepsilon} \in S_L$ are piecewise polynomials of fixed degree defined on a bounded domain $\Omega$, these functions are bounded, i.e., $u^L,u^{L,\varepsilon} \in L^\infty(\Omega)$. 
    Therefore,
    we may apply Lemma \ref{la:stability-expectation-moments} choosing $p=\infty$ and, thus $p' = 1$. Here we directly use \eqref{eq:pertub-error-exp} in \eqref{eq:difference-moments-2}.
    \qed
  \end{proof}

We emphasize that the assumption \eqref{eq:pertub-error-exp} is satisfied by \eqref{eq:thres-error-expectation-I} when performing thresholding using the local norm \eqref{eq:loc-norm-expectation} with local threshold values \eqref{eq:loc-thresh-cond-expectation} according to Thm.~\ref{thm:thres-error-1}.

By the same arguments this result extends to the limit $L\rightarrow\infty$.

\begin{theorem}{(Convergence of expectation and higher order moments)}
  \label{thm:convergence-expectation-moments}
  Fix $q\in [1,\infty]$.
  Let be $u\in L^q(\Omega)$ the limit of the sequence  $\{u^L\}_{L\in\N} \subseteq L^q(\Omega)$, i.e.,
  \begin{align}
    \label{eq:convergence-2}
    \Vert u - u^L \Vert_{L^q(\Omega)} \to 0,\quad L\to\infty.
  \end{align}
  Assume that the probability density function is  bounded, i.e., $p_\xi\in L^\infty(\Omega_2)$.
  If $u$ and $u_L$ are uniformly bounded, i.e., there exists a constant $0<C<\infty$ independent of $L$ such that
  \begin{align}
    \label{eq:uniform-boundedness-2}
    \Vert u \Vert_{L^\infty(\Omega)} \le C,\quad
    \Vert u^L \Vert_{L^\infty(\Omega)} \le C\quad \forall\,L\in\N,
  \end{align}
  then the error in the expectation and the higher order moments for $k\in\N$ and $q\in[1,\infty)$ can be estimated by
  \begin{align}
    \label{eq:error-moments-1}
     & \Vert \E[u] - \E[u^L] \Vert_{L^q(\Omega_1)}  \le
    \Vert p_\xi \Vert_{L^\infty(\Omega_2)}  \Vert u - u^L \Vert_{L^q(\Omega)}, \\
    \label{eq:error-moments-2}
     & \Vert \M^k[u] - \M^k[u^L] \Vert_{L^q(\Omega_1)} \le \overline{C} \,
    \max\{ \Vert p_\xi \Vert_{L^\infty(\Omega_2)}, \Vert p_\xi \Vert^k_{L^\infty(\Omega_2)}\}
    \,  \Vert p_\xi \Vert_{L^\infty(\Omega_2)}\, \Vert u - u^L \Vert_{L^q(\Omega)}
    \intertext{and for $q=\infty$ by}
    \notag
     & \Vert \E[u] - \E[u^L] \Vert_{L^\infty(\Omega_1)}  \le
    \Vert u - u^L \Vert_{L^\infty(\Omega)},                                    \\
    \notag
     & \Vert \M^k[u] - \M^k[u^L] \Vert_{L^\infty(\Omega_1)} \le \overline{C}
    \, \Vert u - u^L \Vert_{L^\infty(\Omega)}
  \end{align}
  with
  \begin{align}
    \label{eq:ck-bound-2}
    \overline{C} =
    \sum_{j=0}^k  \binom{k}{j} \left(2k-j \right)\,C^{2k-j-1}.
  \end{align}
  In particular, the expectation $\E[u^L]$ and the centralized higher order moments $\M^k[u^L]$ converge to
  $\E[u]$ and $\M^k[u]$ in $L^q(\Omega_1)$.
\end{theorem}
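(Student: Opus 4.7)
The strategy is to view Theorem \ref{thm:convergence-expectation-moments} as a direct specialization of the stability result Lemma \ref{la:stability-expectation-moments} applied to the pair $(u, u^L)$: the uniform $L^\infty$ bound \eqref{eq:uniform-boundedness-2} controls the coefficient $c_k(u, u^L)$ in \eqref{eq:ck} uniformly in $L$, after which the $L^q$ convergence \eqref{eq:convergence-2} drives the right-hand side to zero.

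First I would handle the expectation. By linearity I write $\E[u] - \E[u^L] = \E[u - u^L]$ and apply inequality \eqref{eq:estimate-expectation-1} of Lemma \ref{la:estimates-expectation} with $k=1$ to the $L^\infty(\Omega)$ function $u - u^L$, which yields \eqref{eq:error-moments-1} for $q \in [1,\infty)$; the $q = \infty$ version follows identically from \eqref{eq:estimate-expectation-3}. For the centralized moments I would then invoke \eqref{eq:difference-moments-2} of Lemma \ref{la:stability-expectation-moments} with $v = u^L$ and the parameter choice $p = \infty$, $p' = 1$, so that the bound reads in terms of $\Vert \E[|u - u^L|] \Vert_{L^q(\Omega_1)}$ times $c_k(u, u^L) \cdot \max\{\Vert p_\xi \Vert_{L^\infty(\Omega_2)}, \Vert p_\xi \Vert^k_{L^\infty(\Omega_2)}\}$. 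The hypothesis \eqref{eq:uniform-boundedness-2} gives $M(u, u^L) \le C$, hence $c_k(u, u^L) \le \overline{C}$ with $\overline{C}$ as defined in \eqref{eq:ck-bound-2}. Applying the expectation bound already derived above to the nonnegative function $|u - u^L|$ produces
\[
\Vert \E[|u - u^L|] \Vert_{L^q(\Omega_1)} \le \Vert p_\xi \Vert_{L^\infty(\Omega_2)} \Vert u - u^L \Vert_{L^q(\Omega)},
\]
from which \eqref{eq:error-moments-2} follows by combining the estimates. The $q = \infty$ branch of the moment bound is treated analogously, using the $L^\infty$ statements in Lemma \ref{la:stability-expectation-moments} together with \eqref{eq:estimate-expectation-1c}. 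Convergence of $\E[u^L] \to \E[u]$ and $\M^k[u^L] \to \M^k[u]$ in $L^q(\Omega_1)$ is then immediate, since every prefactor on the right-hand sides is independent of $L$ while $\Vert u - u^L \Vert_{L^q(\Omega)} \to 0$ by \eqref{eq:convergence-2}.

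The only mild subtlety — and what might otherwise look like an obstacle — is that Lemma \ref{la:stability-expectation-moments} is stated for $u, v \in S_L$, whereas in this theorem $u$ lies a priori only in $L^q(\Omega) \cap L^\infty(\Omega)$ and need not belong to any discrete space. This is resolved by observing that the proof of Lemma \ref{la:stability-expectation-moments} uses only the underlying Lemmas \ref{la:estimates-expectation} and \ref{la:error-differences-expectation-moments}, both of which are formulated for arbitrary $L^\infty(\Omega)$ functions; the membership in $S_L$ plays no role in the derivation, and the stability estimates therefore extend verbatim to $u, u^L \in L^\infty(\Omega)$ under hypothesis \eqref{eq:uniform-boundedness-2}.
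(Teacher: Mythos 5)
Your argument is correct and is precisely the route the paper intends: the theorem is stated immediately after the remark that it follows ``by the same arguments,'' i.e., by applying Lemma \ref{la:stability-expectation-moments} with $v=u^L$, $p=\infty$, $p'=1$, bounding $c_k(u,u^L)\le\overline{C}$ via $M(u,u^L)\le C$, and controlling $\Vert \E[|u-u^L|]\Vert_{L^q(\Omega_1)}$ by $\Vert p_\xi\Vert_{L^\infty(\Omega_2)}\Vert u-u^L\Vert_{L^q(\Omega)}$. Your observation that the $S_L$ hypothesis in Lemma \ref{la:stability-expectation-moments} is only used to guarantee $L^\infty(\Omega)$ membership, so the estimates extend verbatim under \eqref{eq:uniform-boundedness-2}, is exactly the right justification.
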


\section{Numerical investigations}
\label{sec:NumRes}

We present computational results using Theorem \ref{thm:convergence-expectation-moments} and the local thresholding strategy \eqref{eq:locthreshval} for the stochastic Burgers' equation, see Sect.~\ref{subsec:burgers_cauchy_smooth}, and the random Euler equations, see Sect.~\ref{subsec:euler_cauchy}.

\subsection{Numerical Method}
\label{subsec:NumMeth}

For the approximation of the deterministic Cauchy problem \eqref{eq:det_problem} we  apply a Runge-Kutta discontinuous
Galerkin method \cite{Cockburn:1998jt} on Cartesian grids using quadratic polynomials, i.e., $p=3$,
and an explicit third-order SSP-Runge-Kutta method with three stages for the time-discretization. As numerical flux we choose the local Lax-Friedrichs flux with the Shu limiter \cite{Cockburn:1998jt}.

To enhance the performance of the DG solver it is combined with local grid refinement that allows for adaptation in \textit{both} the spatial and the stochastic directions.
For this purpose, we employ  \emph{multiresolution-based grid adaptation}.
This concept belongs to the class of perturbation methods.
Following the work by \cite{HovhannisyanMuellerSchaefer-2014} the DG solver is intertwined with the MRA in
Sect.~\ref{subseq:MRA-product}.
In each time step $t_n$ the adaptive grid $\calG^n_{L,\veps}$ is determined by means of the set of significant details $\calD^n_{L,\veps}$ corresponding to the DG approximation $u_{L,\veps}^n$
where the cells in the grid hierarchy are refined as long as there exists a significant detail. One time step consists of the following three steps summarized in Algorithm \ref{alg:adap}.
Here we apply the MRA to products of DG spaces according to Sect.~\ref{sec:MRA-DG}.
Essential for the performance of the adaptive solver is the choice of the threshold value $\veps_{\text{max}}$ and the prediction strategy.

\begin{algorithm}
  \caption{Timestep Multiresolution DG scheme}
  \begin{enumerate}
    \item \textbf{Grid refinement}:
          \begin{enumerate}
            \item Perform a local multiscale transformation to determine the multiscale
                  decomposition of $u^n_{L,\veps}$
                  and the set of significant details $\calD^n_{L,\veps}$.
            \item Determine a  prediction set $\tilde{\calD}^n_{L,\veps}\supset \calD^n_{L,\veps} $ by means of $\calD^n_{L,\veps}$.
            \item Perform a local inverse multiscale transformation to determine the adaptive grid $\tilde{\calG}^n_{L,\veps}$ from the prediction set $\tilde{\calD}^n_{L,\veps}$ and the corresponding single-scale representation $\tilde{u}_{L,\veps}^n$.
          \end{enumerate}
    \item \textbf{Time evolution}:\\
          Perform Runge-Kutta time evolution on the single-scale representation $\tilde{u}_{L,\veps}^n$ to compute $\tilde{u}_{L,\veps}^{n+1}$ where on each stage limiting is performed on all
          elements of the adaptive grid $\tilde{\calG}^{n+1}_{L,\veps}$ on the finest level.
    \item \textbf{Grid coarsening}:
          \begin{enumerate}
            \item Perform a local multiscale transformation to determine the multiscale
                  decomposition of $\tilde{u}_{L,\veps}^{n+1}$.
            \item Determine the set of significant details $\mathcal{D}_\veps^{n+1}$ by applying hard thresholding to  $\tilde{\calD}_{L,\veps}^{n+1}$.
            \item Perform a local inverse multiscale transformation to determine the adaptive grid $\calG^{n+1}_{L,\veps}$ from the set $\calD^{n+1}_{L,\veps}$  and the corresponding single-scale representation $u_{L,\veps}^{n+1}$.
          \end{enumerate}
  \end{enumerate}
  \label{alg:adap}
\end{algorithm}

Since the adaptive multiresolution based DG solver has been subject of numerous publications, we abstain from presenting the details of the solver except for the ingredients that have been modified for our purposes, namely, the threshold process. More details on the adaptive solver can be found in \cite{GerhardMueller-2016,GerhardIaconoMayMueller-2015}.
\begin{itemize}
  \item
        The set of significant details $\calD_{L,\veps}$ is determined by
        the local norm \eqref{eq:loc-norm-expectation} using local threshold values
        $\veps_{\iblambda,L}$ according to  \eqref{eq:loc-thresh-uniform} or
        local threshold values $\veps_{\iblambda,L,q}$ determined by \eqref{eq:locthreshval} for $q=1$. The two
        alternatives will be distinguished in the following by \textit{uniform} thresholding and \textit{weighted} thresholding, respectively.
        We emphasize that grid adaptation is performed always in both the spatial and the stochastic directions.
        However, when using weighted thresholding a detail may be non-significant if the norm of the local weight $p_\xi$ is small although, the detail is large. This will cause a larger error in the solution of the deterministic problem but not in its stochastic moments.
  \item
        To ensure that all significant details at the old \textit{and} the new time level are adequately resolved when performing the time step,  we need to inflate the set of significant details $\calD^n_{L,\veps}$ where we flag non-significant details at the old time to become significant at the new time. This results in the prediction set $\tilde{\calD}^n_{L,\veps} \supset \calD^n_{L,\veps}$. In the original adaptive scheme prediction is applied to all directions. Since in the deterministic problem \eqref{eq:det_problem} there is no flow in the stochastic directions, information cannot propagate in this direction. Therefore, prediction is only performed in the spatial directions although grid adaptation is performed in both directions. In our computations we apply Harten's prediction strategy to determine the prediction set $\tilde{\calD}^n_{L,\veps}$ in Algorithm \ref{alg:adap}, cf.~\cite{Gerhard:2017}.

  \item
        For the global threshold value $\veps_{\text{max}}$ we apply the heuristic strategy developed in \cite{GerhardIaconoMayMueller-2015}
        \begin{equation}
          \label{eq:global_threshold_error}
          \veps_{\text{max}} = C_\text{heuristic}\, h_L^{\beta}
        \end{equation}
        with $\beta$ the order of the discretization error. Since in our computations the solution always exhibit discontinuities, the order of convergence will be bounded by one ($\beta=1$).
        The choice of $C_\text{heuristic}$ is problem-dependent and will be discussed below.
        The heuristic strategy  was verified numerically to preserve the accuracy of the reference scheme
        when choosing Harten's prediction strategy.
\end{itemize}

\begin{remark}[On the reliability of the adaptive scheme.]
  \label{rem:reliability}
  When performing the adaptive scheme a threshold error is introduced in each time step. In Section \ref{sec:MRA-moments} we investigated the perturbation error caused by thresholding the data. In particular, the thresholding has been designed such that the perturbation error in the stochastic moments is uniformly bounded, see
  Thm.~\ref{thm:thres-error-1-moments}.
  In the following we discuss the uniform boundedness of the agglomerated perturbation error over \textit{all} time steps:\\
  Let be $u\in L^1(\Omega)\cap L^\infty(\Omega)$ the entropy solution of a scalar conservation law in multi-dimensions at some finite time $0 < T < \infty$. By  $\{u^L\}_{L\in\N} \in \calS$ we denote a sequence of (DG-) approximations that is assumed to converge to $u$ in $L^1(\Omega)$, i.e.,
  \begin{align}
    \label{eq:convergence-3}
    \Vert u - u^L \Vert_{L^1(\Omega)} \to 0,\quad L\to\infty.
  \end{align}
  (Note that $u^L$ is not the $L^2$-projection of $u$.)
  The sequence is assumed to be uniformly bounded, i.e., \eqref{eq:uniform-boundedness-2} holds true.
  The probability density function is assumed to be bounded, i.e., $p_\xi\in L^\infty(\Omega_2)$. Then the error in the expectation and the centralized $k$-th moments are bounded by \eqref{eq:error-moments-1} and \eqref{eq:error-moments-2}, respectively, for $q=1$. In particular, the expectation $\E[u^L]$ and the centralized higher order moments $\M^k[u^L]$ converge to
  $\E[u]$ and $\M^k[u]$ in $L^1(\Omega_1)$.

  Let $u^{L,\veps}\in S_L$ be the uniformly bounded  solution of the adaptive DG scheme,
  i.e., $u^{L,\veps}$ is not the approximation of $u^L$ obtained by thresholding of its multiscale decomposition according to Theorem \ref{thm:thres-error-1}.
  The prediction strategy is assumed to be reliable, i.e.,
  \begin{align}
    \label{eq:reliabilitycond}
    \tilde{\calD}^n_{L,\veps} \supset \calD^n_{L,\veps} \cup  \calD^{n+1}_{L,\veps}
  \end{align}
  and the
  threshold value $\veps_{\text{max}}$ is chosen such that the  perturbation error is uniformly bounded, i.e.,
  \begin{align}
    \label{eq:perturbationerror-agglomerated}
    \Vert u^L- u^{L,\veps} \Vert_{L^1(\Omega)} \le
    C_{\text{thres}}\,\veps_\text{max},
  \end{align}
  where the constant $C_\text{thres}$ is independent of the discretization and the threshold value but may depend on the final time $T$, the initial data $u_0$, the Lipschitz constant of the numerical flux, the CFL number, etc..
  Then we may estimate the error due to discretization and perturbation by  \eqref{eq:error-moments-1}, \eqref{eq:error-moments-2}
  and \eqref{eq:difference-moments-1},  \eqref{eq:difference-moments-2} of Lemma \ref{la:stability-expectation-moments}
  \begin{align}
    \label{eq:error-discr-pert-moments-1}
    \Vert \E[u] - \E[u^{L,\veps}] \Vert_{L^q(\Omega_1)}     & \le
    \Vert \E[u] - \E[u^L] \Vert_{L^q(\Omega_1)} + \Vert \E[u^L]  - \E[u^{L,\veps}] \Vert_{L^q(\Omega_1)}  \nonumber                                         \\
                                                            & \le
    \Vert p_\xi \Vert_{L^\infty(\Omega_2)}\left(  \Vert u - u^L \Vert_{L^q(\Omega)} + C_\text{thres}\,\veps_\text{max} \right)                              \\[2mm]
    \label{eq:error-discr-pert-moments-2}
    \Vert \M^k[u] - \M^k[u^{L,\veps}] \Vert_{L^q(\Omega_1)} & \le
    \Vert \M^k[u] - \M^k[u^L] \Vert_{L^q(\Omega_1)} + \Vert \M^k[u^L] - \M^k[u^{L,\veps}] \Vert_{L^q(\Omega_1)} \nonumber                                   \\
                                                            & \le \max\{ \Vert p_\xi \Vert_{L^\infty(\Omega_2)}, \Vert p_\xi \Vert^k_{L^\infty(\Omega_2)}\}
    \left(\,  \overline{C} \,  \Vert p_\xi \Vert_{L^\infty(\Omega_2)}\, \Vert u - u^L \Vert_{L^q(\Omega)}  + \overline{C}_{L,\veps}  \, C_{\text{thres}}\,\veps_{\text{max}} \right)
  \end{align}
  with
  $\overline{C},\ \overline C_{L,\veps}$ according to \eqref{eq:ck-bound-2} with $C$ the uniform bound for $u^L$ and $u^{L,\veps}$.\\
  From this discussion we conclude that the adaptive scheme is reliable, i.e., \eqref{eq:error-discr-pert-moments-1} and \eqref{eq:error-discr-pert-moments-2} hold, whenever we are able to verify the reliability condition \eqref{eq:reliabilitycond} and the uniform boundedness of the  perturbation error of the adaptive scheme applied to the deterministic problem \eqref{eq:det_problem} in both the spatial and stochastic variables.
  So far, such a result has been verified for the DG scheme in the mean in \cite{HovhannisyanMuellerSchaefer-2014}  for one-dimensional nonlinear conservation laws. Here,  a prediction strategy that is strongly intertwined with a particular limiter and an a priori  strategy for the threshold procedure has been used. Since there is no flux in the stochastic directions and, thus, no limiting necessary, it might be possible to extend the result     to the deterministic problem in one space dimension and arbitrary stochastic dimensions.
\end{remark}

For the numerical simulations we use multiresolution-based grid adaptation with a dyadic grid hierarchy. For this purpose, let $L\in\N$ be the maximum number of refinement levels, i.e., for each level $l=0,\dots,L$ we have $N_{l,\bx}=2^lN_{0,\bx}$ cells in the spatial direction $\bx$ and $N_{l,\bxi}=2^lN_{0,\bxi}$ cells in the stochastic direction $\bxi$, where $N_{0,\bx},N_{0,\bxi}$ are the number of cells in the initial grid in the spatial and stochastic direction, respectively.

To determine the stochastic moments of the solution $u^{L, \veps}(t,\cdot,\cdot)$ of an adaptive DG scheme for a fixed time $t>0$, we use an approach similar to that in \cite{Abgrall2014,Tokareva2022}. For each fixed $x_1\in\Omega_1$ exists a set $\mathcal I^2(x_1)$ such that
\begin{align*}
  \overline{\Omega_2} = \overline{\bigcup_{\lambda\in\mathcal I^2(x_1)}V_{\lambda}^2},\quad V^2_\lambda \cap V^2_\mu=\emptyset,\quad \lambda,\mu\in \mathcal I^2(x_1),\quad  \lambda \neq \mu
\end{align*}
describing the adaptive grid in the stochastic direction at $x_1$. The stochastic moments are then calculated in a post-processing step by integration over the stochastic domain $\Omega_2$, i.e.,
\begin{align*}
  \E[u^{L,\veps}](t,x_1)   & = \sum_{\lambda\in\mathcal I^2(x_1)}\int_{V^2_\lambda}u^{L,\veps}(t,x_1,x_2)p_\xi(x_2)\diff x_2,                                       \\
  \M^k[u^{L,\veps}](t,x_1) & = \sum_{\lambda\in\mathcal I^2(x_1)}\int_{V^2_\lambda}\left(u^{L,\veps}(t,x_1,x_2) -\E[u^{L,\veps}](t,x_1)\right)^kp_\xi(x_2)\diff x_2
\end{align*}
where each integral is calculated by a quadrature formula of the corresponding cell.

The multiresolution analysis takes into account the local structure of the stochastic problem by resolving regions with large local changes, such as discontinuities, at  higher resolution than smooth regions. Thus, we avoid the Gibb's phenomenon leading to  an accurate approximation of the stochastic moments.

\subsection{Burgers' equation with uncertain smooth initial values}
\label{subsec:burgers_cauchy_smooth}

In this section we consider the one-dimensional Burgers' equation with uncertain smooth initial data and  non-uniform random variables:
\begin{align}
  \label{eq:burgers_cauchy_sto}
  \partial_t \bar u(t,\bx;\omega_\xi) + \partial_{\bx}\left(\frac{\bar u^2(t,\bx;\omega_\xi)}{2}\right) = 0,\quad  \bx \in [0,1],\ t > 0
\end{align}
with uncertain initial condition
\begin{align}
  \label{eq:burgers_cauchy_init_sto}
  \bar u(0,\bx;\omega_\xi) = \sin(2\pi \bx)\sin(2\pi \omega_\xi),\quad \bx \in [0,1]
\end{align}
for all realizations $\omega_\xi$ of the random variable $\xi$. In addition, we assume periodic boundary conditions in the spatial direction.
We consider the following random variables:
\begin{align}
  \label{eq:non_uniform_rv}
  \xi_1 \sim \mathcal U(0,1),\quad \xi_2 \sim \mathcal N(0.5,0.15),\quad \xi_3 \sim \mathcal B(2,5), \quad \xi_4 \sim \mathcal B(2,20),
\end{align}
where $\mathcal U(a,b)$ is the uniform distribution in $(a,b)\subset\R$, $\mathcal N(\mu,\sigma^2)$ is the normal distribution with mean $\mu\in\R$ and variance $\sigma^2 > 0$ and $\mathcal B(\alpha,\beta)$ is the beta distribution for values of $\alpha,\beta > 0$.

To reformulate the stochastic Cauchy problem \eqref{eq:burgers_cauchy_sto}, \eqref{eq:burgers_cauchy_init_sto} in the deterministic formulation we choose $\bxi \in [0,1]$ for the random variables $\xi_1,\, \xi_3$ and $\xi_4$. In order to handle the non-compact support of the random variable $\xi_2$, we cut off the stochastic domain for the numerical solutions and set $\bxi\in[0,1]$.
Although, we obtain an additional error by cutting off the stochastic domain, the domain can be chosen so that this error becomes negligible compared to the discretization and perturbation error. For the random variable $\xi_2$ holds $\Prob(\xi_2 \in [0,1])\approx 0.9991$. Therefore, more than $99.9 \%$ of all drawn realizations take values in $[0,1]$. Thus, we obtain the deterministic formulation of \eqref{eq:burgers_cauchy_sto}, \eqref{eq:burgers_cauchy_init_sto}:
\begin{align}
  \label{eq:burgers_cauchy_det}
  \partial_t u(t,\bx,\bxi) + \partial_{\bx}\left(\frac{u^2(t,\bx,\bxi)}{2}\right) = 0,\quad  (\bx,\bxi) \in [0,1]\times[0,1],\ t > 0
\end{align}
with deterministic initial condition
\begin{align}
  \label{eq:burgers_cauchy_init_det}
  u(0,\bx,\bxi) = \sin(2\pi \bx)\sin(2\pi \bxi),\quad (\bx,\bxi) \in [0,1]\times[0,1].
\end{align}

In this section, we have chosen the maximum number of refinement levels $L=6$ and the number of cells of the initial grid $N_{0,\bx}=8$ and $N_{0,\bxi}=16$. We set the CFL number to 0.1. For uniform thresholding we choose the constant $C_\text{heuristic}=0.1$ for the global threshold value \eqref{eq:global_threshold_error}. On the other hand,
motivated by
Thm.~\ref{thm:thres-error-1-moments},
for weighted thresholding we set $C_i = C_\text{heuristic} / \max_{\bxi\in[0,1]} p_{\xi_i}(\bxi)$, where $p_{\xi_i}$ is the probability density of the corresponding random variable $\xi_i$ for $i=1,\dots,4$.

\paragraph{Computations with uniform thresholding.}
We first investigate the numerical solution of an adaptive multiresolution-based DG scheme, as described in Sec.~\ref{subsec:NumMeth}, where the MRA is applied with uniform thresholding.
The numerical solution of \eqref{eq:burgers_cauchy_det}, \eqref{eq:burgers_cauchy_init_det} is presented in Fig.~\ref{fig:2d_cauchy_plot}.

\begin{figure}[htbp]
  \centering
  \includegraphics[width=.32\textwidth]{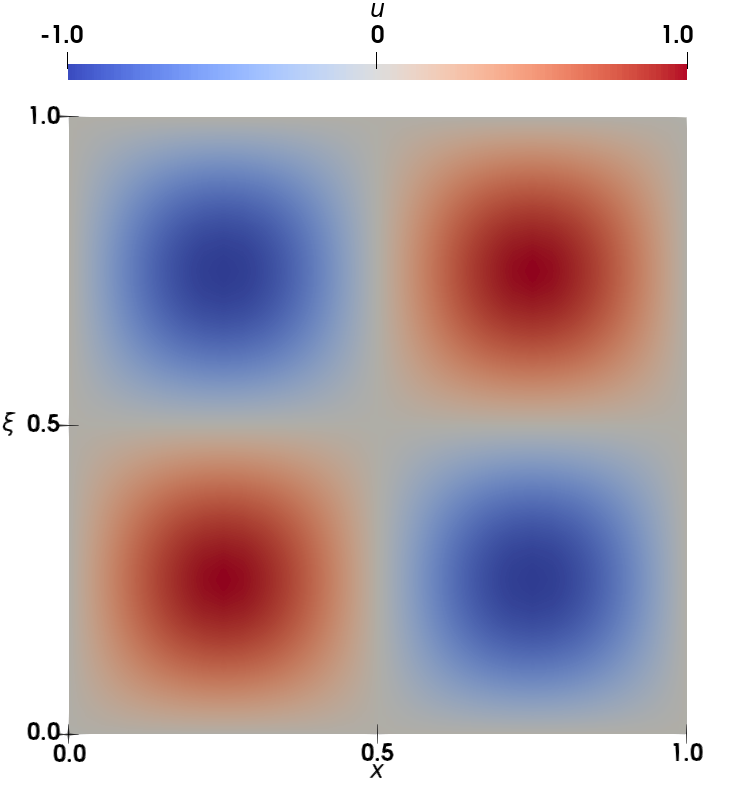}\hfill
  \includegraphics[width=.32\textwidth]{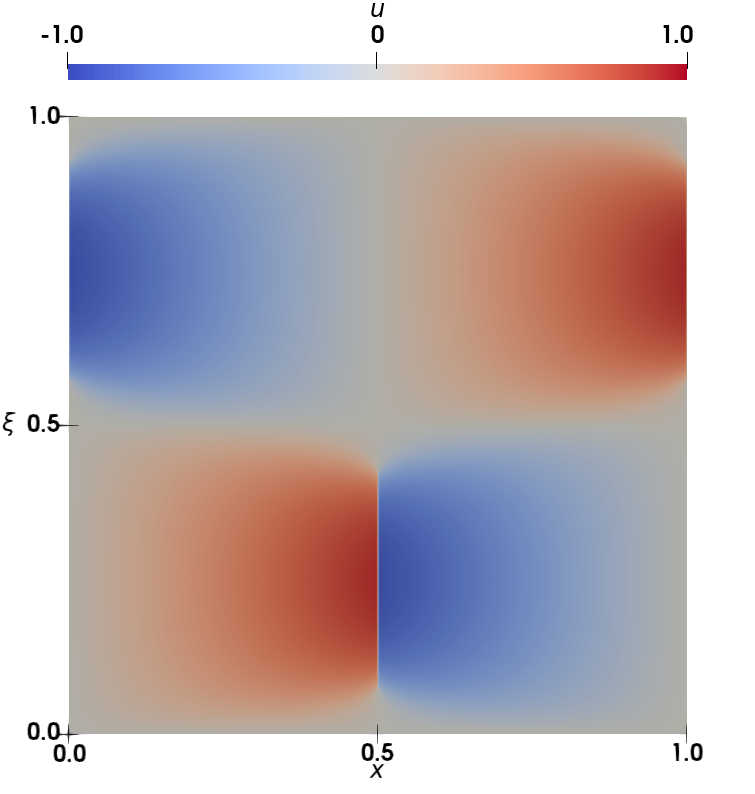}\hfill
  \includegraphics[width=.32\textwidth]{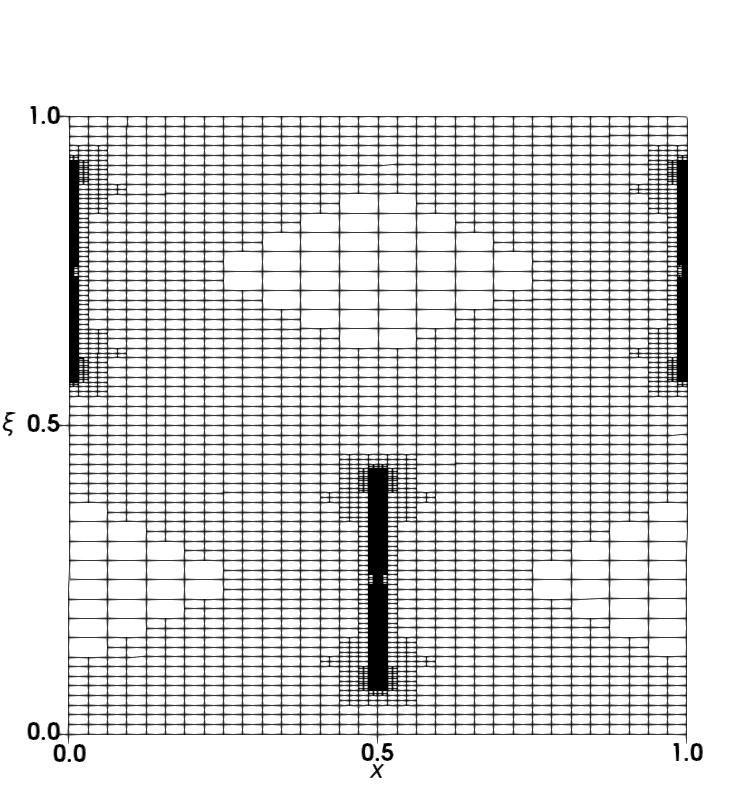}
  \caption{Solution for the Burgers' equation \eqref{eq:burgers_cauchy_det} with uncertain initial data \eqref{eq:burgers_cauchy_init_det}. Left: Initial data at time $t=0$; Middle: Numerical solution at time $t=0.35$; Right: Corresponding adaptive grid
    with uniform thresholding strategy and $L=6$ refinement levels
    for the numerical solution at time $t=0.35$.}
  \label{fig:2d_cauchy_plot}
\end{figure}

We interpret each horizontal line as a realization of the corresponding random variable. For $\bxi < 0.5$ a stationary shock is located at $\bx=0.5$, whereby for $\bxi > 0.5$ there is a rarefaction wave. Due to the periodic boundary conditions, the roles are reversed at the boundaries $\bx=0$ and $\bx=1$. Thus, for $\bxi < 0.5$ a rarefaction wave develops at the boundaries whereby for $\bxi > 0.5$ a stationary shock occurs.
The corresponding adaptive grid is also shown in Fig.~\ref{fig:2d_cauchy_plot}. Obviously, the grid is refined in regions with large local changes and less refined in regions with smooth data. Up to now, the grid adaptation is only based on the data of the solution and does not consider stochasticity. Therefore, the adaptive grid is the same for all random variables $\xi_1,\dots,\xi_4$.

We emphasize that we have to calculate the numerical solution of \eqref{eq:burgers_cauchy_det}, \eqref{eq:burgers_cauchy_init_det} only once for all random variables $\xi_1,\dots,\xi_4$ in \eqref{eq:non_uniform_rv}. The stochastic moments of these problems are then computed in a post-processing step where we have to adjust the evaluation of the solution for each random variable.

\begin{figure}[htbp]
  \centering
  \begin{subfigure}[b]{0.495\textwidth}
    \begin{adjustbox}{width=\linewidth}
      \includegraphics{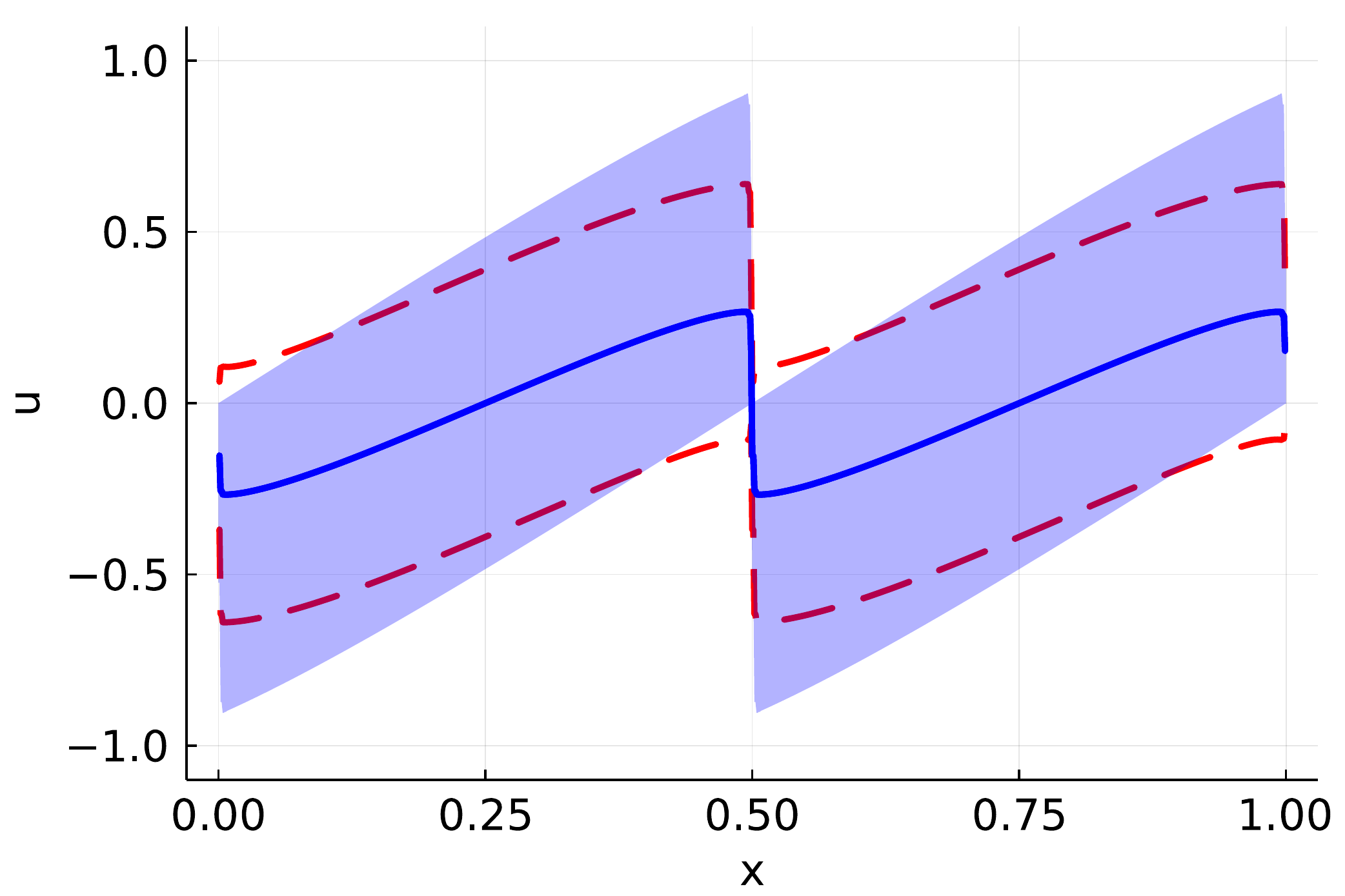}
    \end{adjustbox}
    \caption{Uniform distribution $\mathcal U(0,1)$}
  \end{subfigure}
  \hfill
  \begin{subfigure}[b]{0.495\textwidth}
    \begin{adjustbox}{width=\linewidth}
      \includegraphics{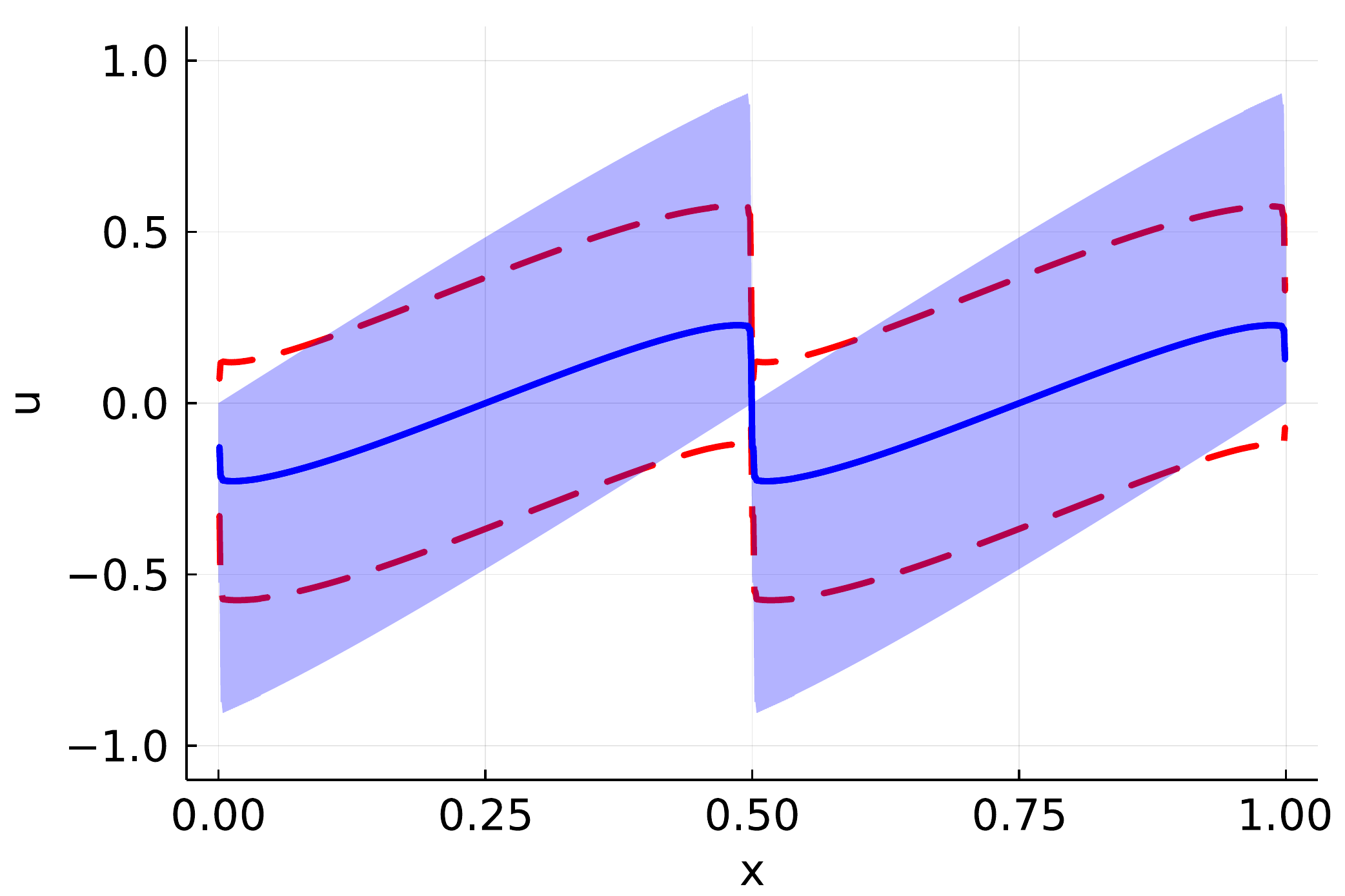}
    \end{adjustbox}
    \caption{Normal distribution $\mathcal N(0.5,0.15)$}
  \end{subfigure}
  \\
  \begin{subfigure}[b]{0.495\textwidth}
    \begin{adjustbox}{width=\linewidth}
      \includegraphics{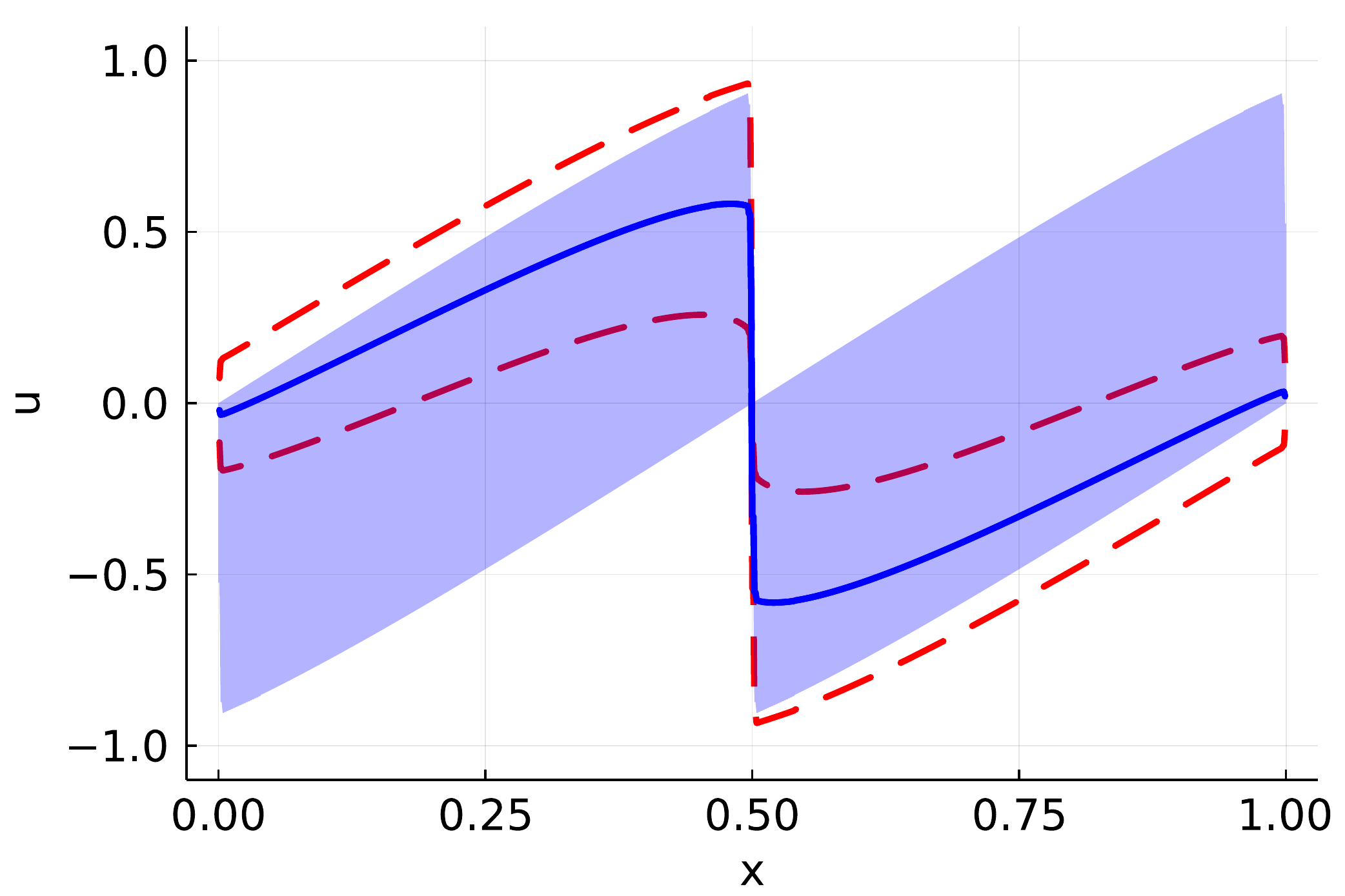}
    \end{adjustbox}
    \caption{Beta distribution $\mathcal B(2,5)$}
  \end{subfigure}
  \hfill
  \begin{subfigure}[b]{0.495\textwidth}
    \begin{adjustbox}{width=\linewidth}
      \includegraphics{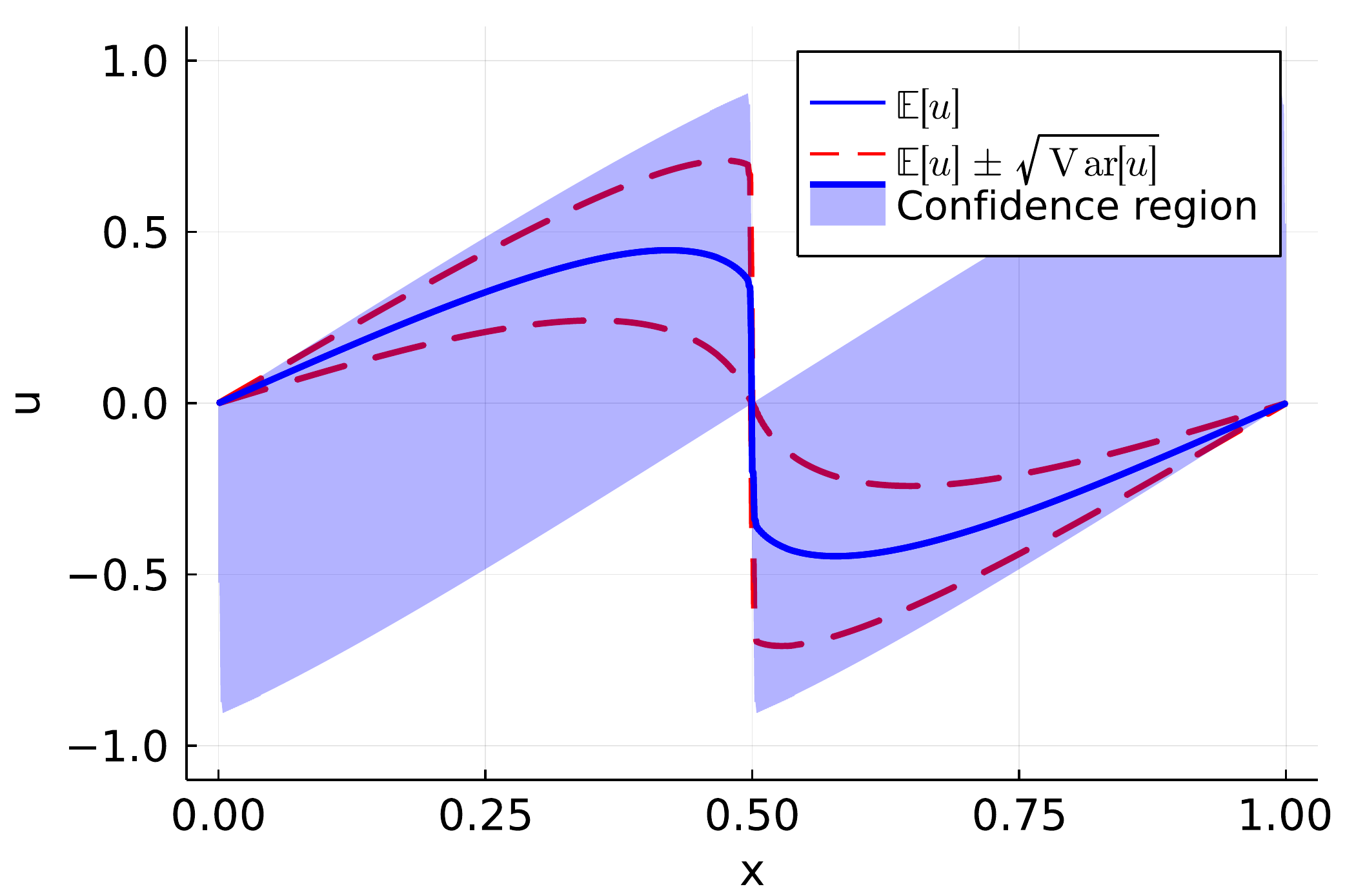}
    \end{adjustbox}
    \caption{Beta distribution $\mathcal B(2,20)$}
  \end{subfigure}
  \caption{Stochastic moments of the problem \eqref{eq:burgers_cauchy_det}, \eqref{eq:burgers_cauchy_init_det} for the different random variables \eqref{eq:non_uniform_rv} at time $t=0.35$. Computations are performed with uniform thresholding and $L=6$ refinement levels.}

  \label{fig:cauchy_moments}
\end{figure}
The stochastic moments obtained by our deterministic approach \eqref{eq:burgers_cauchy_det}, \eqref{eq:burgers_cauchy_init_det} for all random variables \eqref{eq:non_uniform_rv} are shown in Fig.~\ref{fig:cauchy_moments}.
The uniform distributed random variable $\xi_1$ and the normal distributed random variable $\xi_2$ behave similarly due to the symmetry of the corresponding densities at $\bxi = 0.5$. Since the mass of the normal distributed random variable $\xi_2$ is more concentrated around $\bxi=0.5$, the standard deviation of the normal distributed random variable $\xi_2$ is slightly smaller than the standard deviation of the uniform random variable $\xi_1$.
In contrast, the mass of the beta distributed random variables $\xi_3$ is strongly concentrated for $\bxi < 0.5$. Thus, the effects of the stationary shock at $\bx=0.5$ dominate the stochastic moments.
This behavior is amplified for the stochastic variables $\xi_4$ where the mass is highly concentrated for $\bxi < 0.25$. For example, the shock at the spatial boundaries has almost no effect on the stochastic moments for the beta distributed random variable $\xi_4$.
In Fig.~\ref{fig:cauchy_moments} we additionally show the confidence interval of our approach to illustrate the affected regions of the different random variables.

\paragraph{Computations with weighted thresholding.}
Next we investigate the numerical solution of \eqref{eq:burgers_cauchy_det}, \eqref{eq:burgers_cauchy_init_det} using the novel weighted thresholding. The results are shown in Fig.~\ref{fig:2d_cauchy_diff_rv} for the normal distributed random variable $\xi_2$ as well as for the beta distributed random variables $\xi_3$ and $\xi_4$.
The weighted
thresholding strategy results in an adaptive grid that is influenced by the underlying probability density. Thus, grid refinement is triggered more in regions with high mass of the corresponding probability density whereas regions with almost no mass of the corresponding probability density are not refined at all. This is particularly noticeable for distributions with highly concentrated mass of the relevant density functions, as in the case of the beta distributed random variable $\xi_4$. We also note that the corresponding probability density function dominates the effects of the shock for $\bxi > 0.5$ at the boundaries in the spatial directions, which is fully refined in the adaptive grid when using uniform thresholding, cf.~Fig.~\ref{fig:2d_cauchy_plot}.

\begin{figure}[htbp]
  \centering
  \includegraphics[width=.32\textwidth]{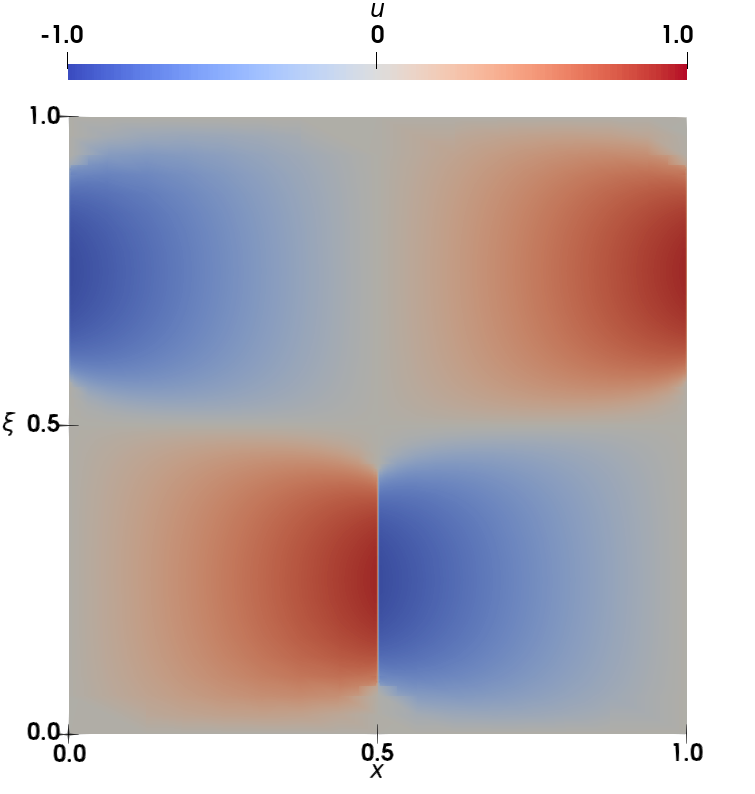}\hfill
  \includegraphics[width=.32\textwidth]{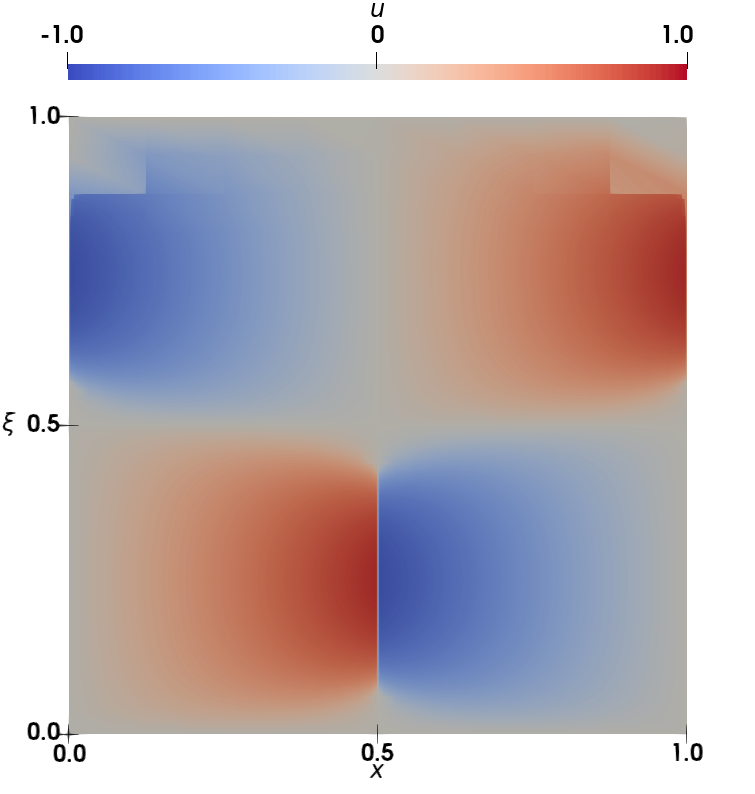}\hfill
  \includegraphics[width=.32\textwidth]{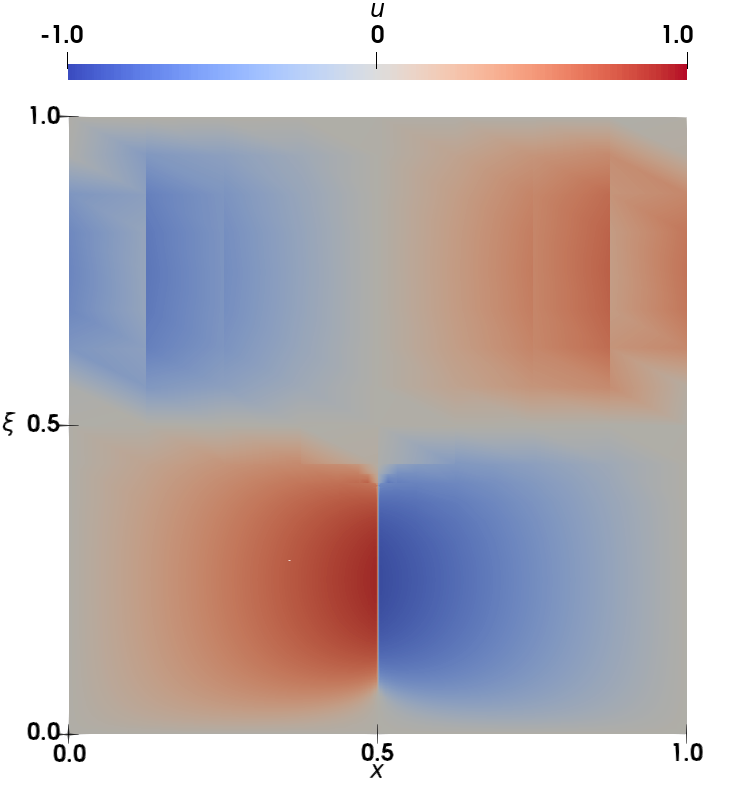} \\
  \includegraphics[width=.32\textwidth]{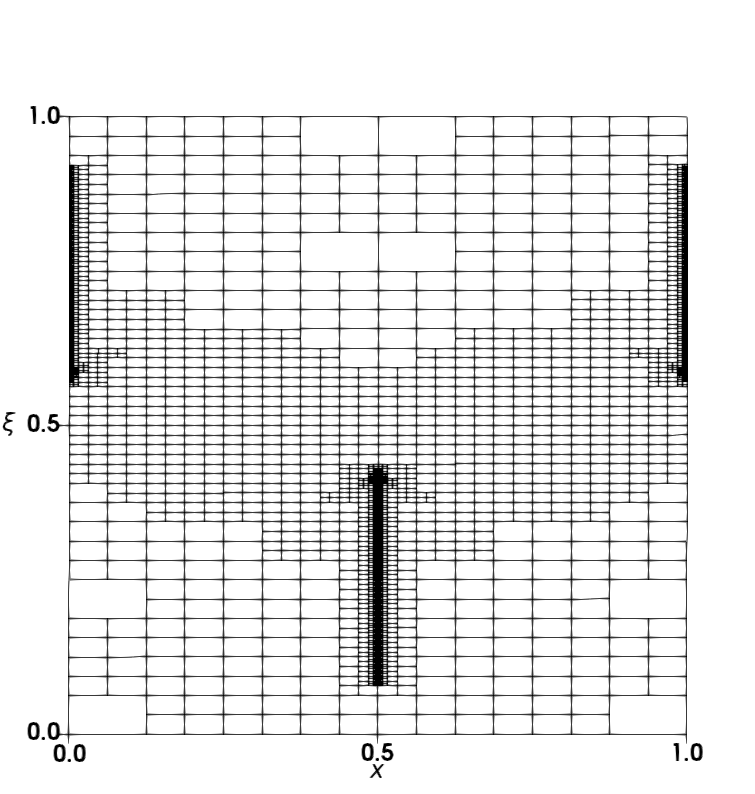}\hfill
  \includegraphics[width=.32\textwidth]{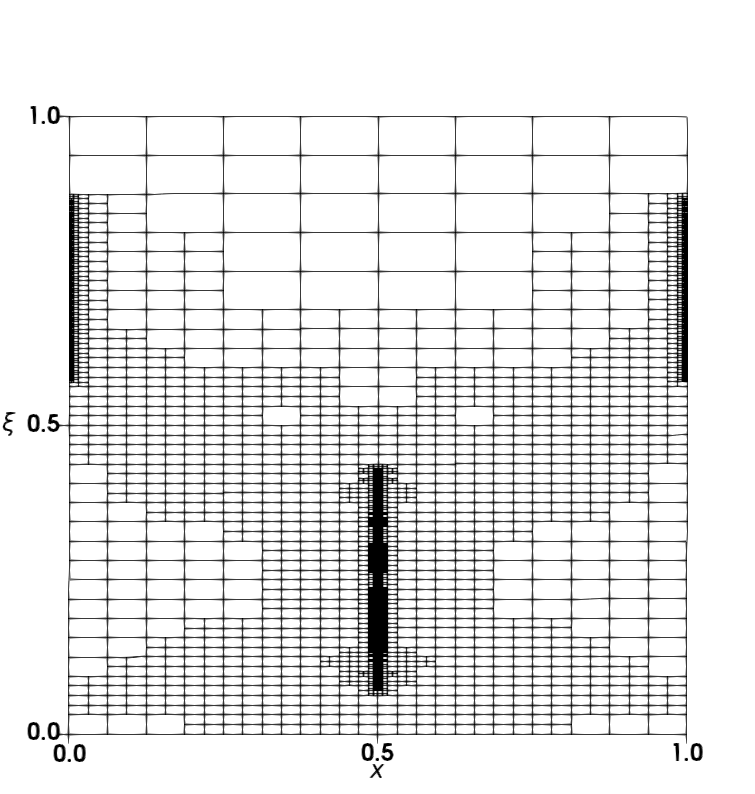}\hfill
  \includegraphics[width=.32\textwidth]{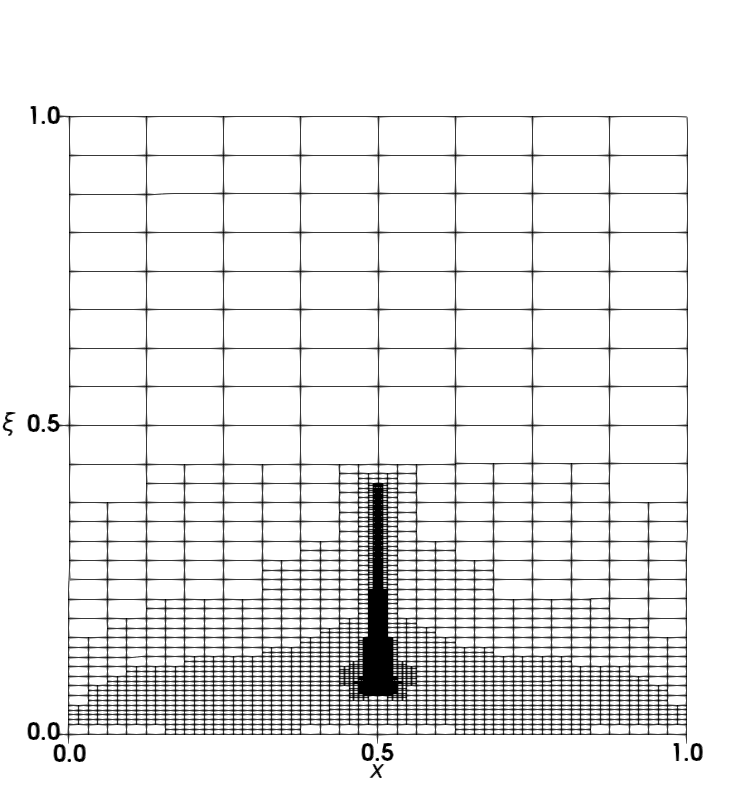}
  \caption{Solution for the Burgers' equation \eqref{eq:burgers_cauchy_det} with uncertain initial data \eqref{eq:burgers_cauchy_init_det} at time $t=0.35$ with its adaptive grid using
    weighted thresholding up to $L=6$ refinement levels. Left: Normal distribution $\mathcal N(0.5,0.15)$; Middle: Beta distribution $\mathcal B(2,5)$; Right: Beta distribution $\mathcal B(2,20)$.}
  \label{fig:2d_cauchy_diff_rv}
\end{figure}

Therefore, computations with weighted thresholding for non-uniform random variables have sparser grids than computations with uniform thresholding. We emphasize that the solution itself may look poor compared to the solution in Fig.~\ref{fig:2d_cauchy_plot},
i.e., the discretization error may be large. This can be particularly seen in regions with shocks where we usually need a locally high level of refinement to properly resolve the discontinuities.
However, the novel weighted thresholding strategy still leads to good results for the stochastic moments as seen in Fig.~\ref{fig:burgers_cauchy_comparison_weighted_vs_non_weighted}. For the uniform random variable $\xi_1$,  $p_{\xi_1}\equiv 1$ holds and thus the resulting grid with weighted grid adaptation coincides with the adaptive grid with uniform thresholding, cf. Fig. \ref{fig:2d_cauchy_plot}.

\begin{figure}[htbp]
  \centering
  \includegraphics[width=.5\textwidth]{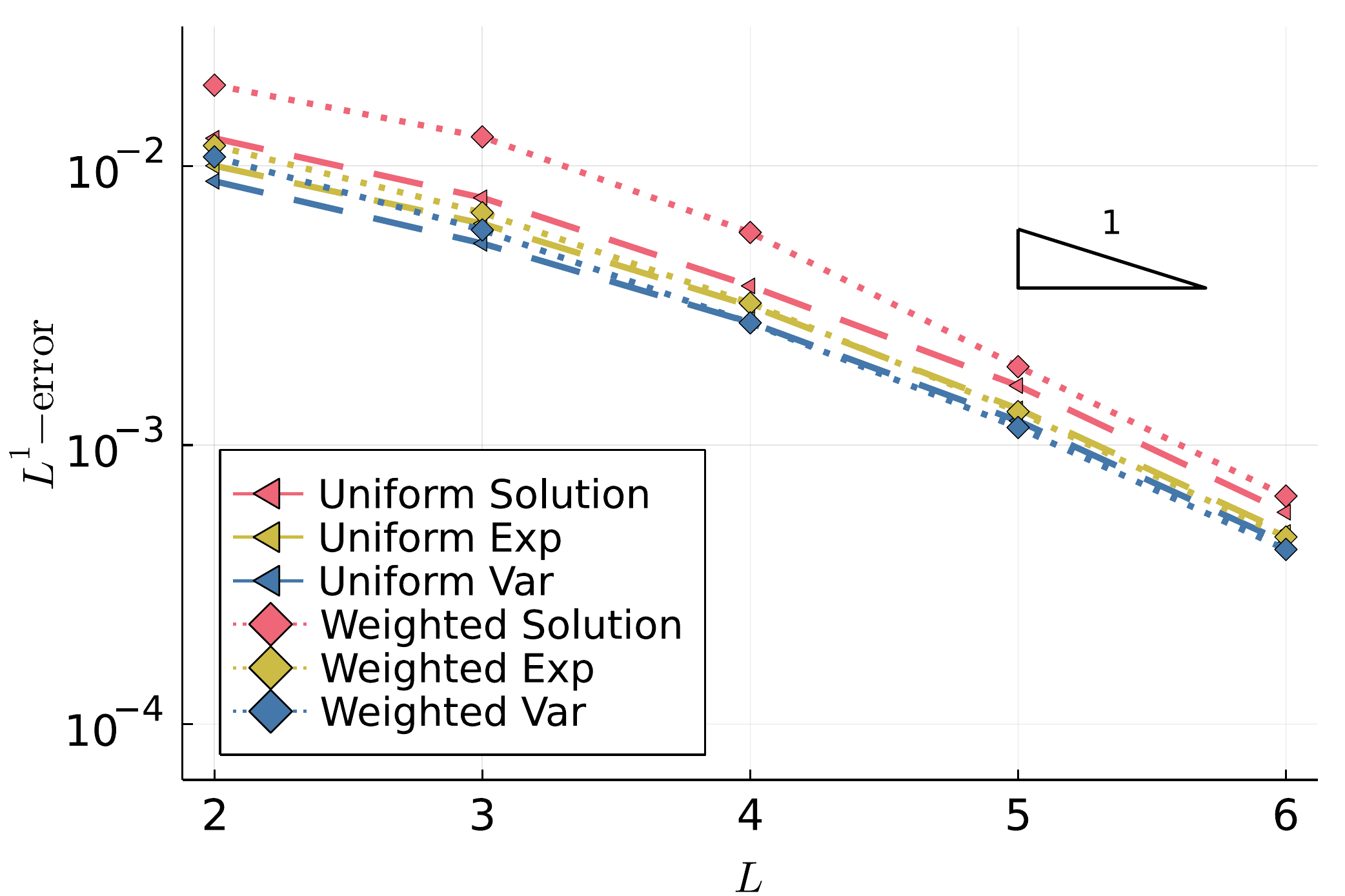}\hfill
  \includegraphics[width=.5\textwidth]{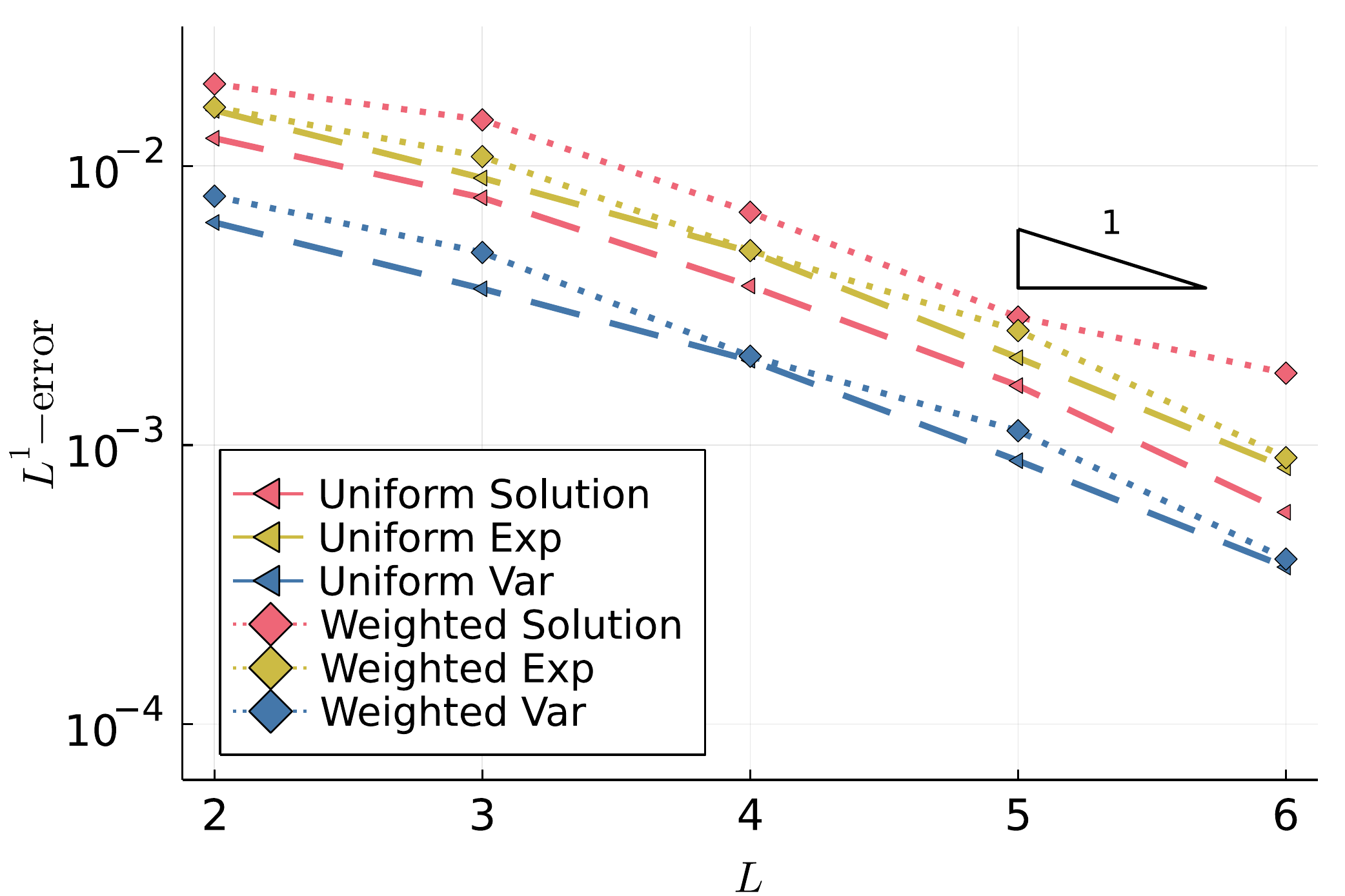}\\
  \includegraphics[width=.5\textwidth]{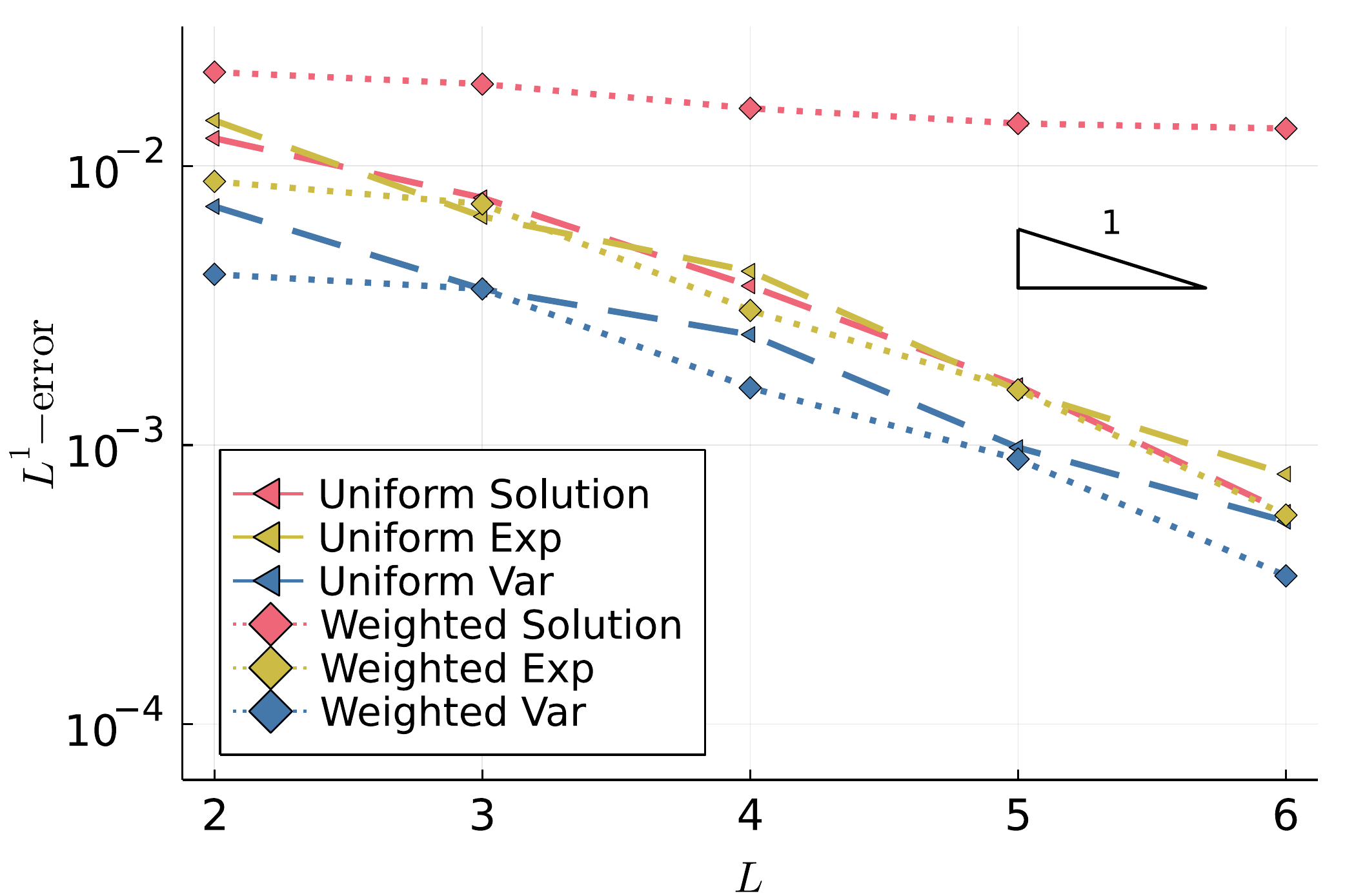}\hfill
  \includegraphics[width=.5\textwidth]{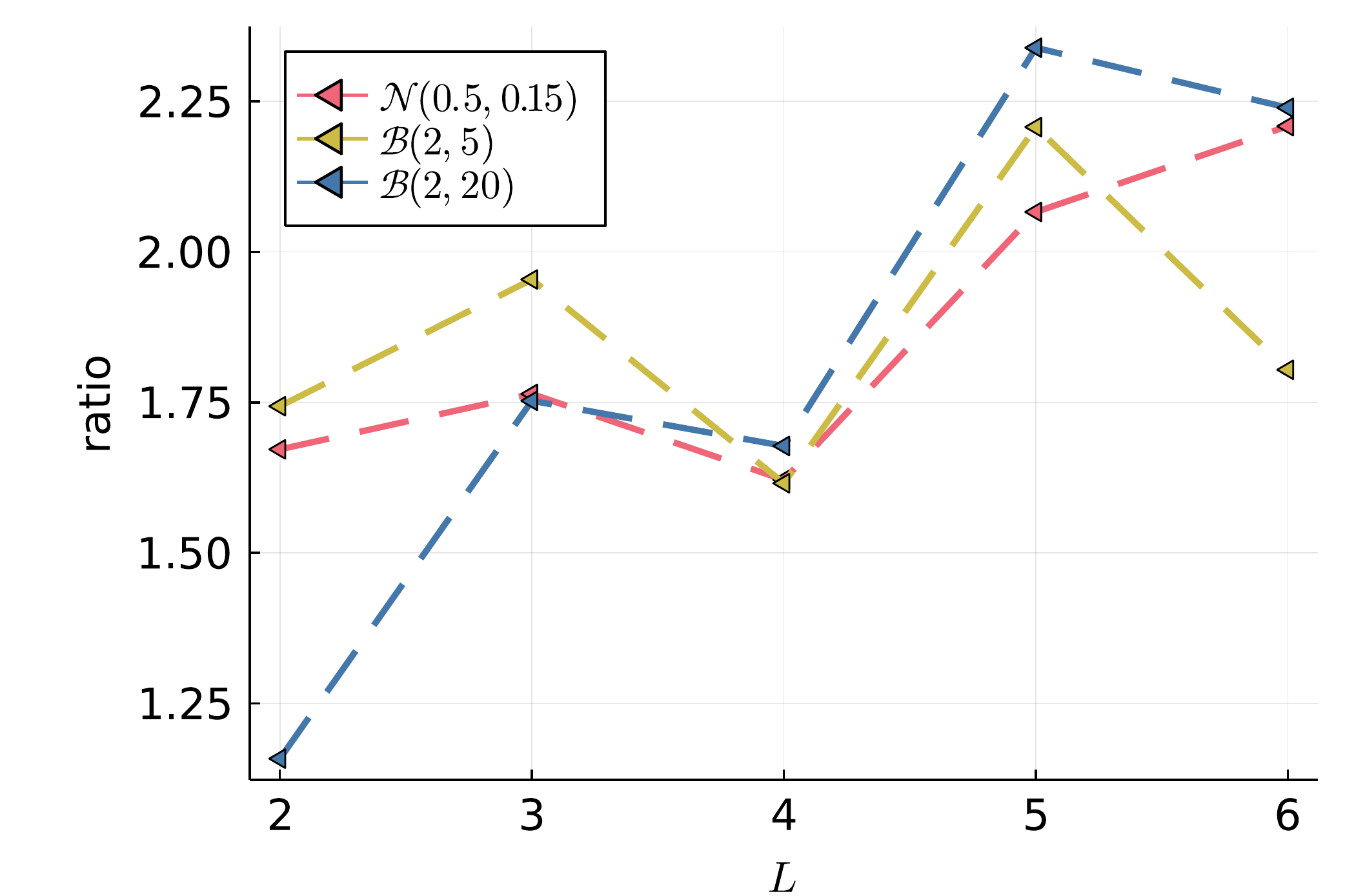}
  \caption{$L^1$-error of the stochastic moments and the solution of \eqref{eq:burgers_cauchy_det}, \eqref{eq:burgers_cauchy_init_det} at time $t=0.35$ using
    both uniform and weighted thresholding up to $L=6$ refinement levels and the ratio of the number of total cells between both methods.
    Top left: Normal distribution $\mathcal N(0.5,0.15)$; Top right: Beta distribution $\mathcal B(2,5)$; Bottom left: Beta distribution $\mathcal B(2,20)$. Bottom right: Ratio of the number of total cells.}
  \label{fig:burgers_cauchy_comparison_weighted_vs_non_weighted}
\end{figure}

\paragraph{Comparison of uniform and weighted thresholding.}

We compare the $L^1$-error of the stochastic moments for the novel and the classic strategy. As reference solution we perform a computation with uniform thresholding on a grid hierarchy with $L=12$ refinement levels.
We observe that for all random variables \eqref{eq:non_uniform_rv} the error of the stochastic moments
decreases by the empirical order of about 1, cf.~Table \ref{tab:ex01_eoc}. Using weighted thresholding, the $L^1$-error is comparable to the $L^1$-error using uniform thresholding.

\begin{table*}[htbp]
  \centering
  \setlength{\extrarowheight}{.6em}
  \setlength{\tabcolsep}{0.8em}
  \begin{tabular}{|l|c||c|c|c|c|c|c|c|c|c|}
    \hline
    \multirow{2}{*}{} &     & \multicolumn{3}{c|}{$\mathcal N(0.5,0.15)$} & \multicolumn{3}{c|}{$\mathcal B(2,5)$} & \multicolumn{3}{c|}{$\mathcal B(2,20)$}                                                       \\
    \cline{2-11}
                      & $L$ & sol                                         & exp                                    & var                                     & sol    & exp    & var    & sol    & exp    & var    \\
    \hline
    uniform           & 3   & 0.7082                                      & 0.6888                                 & 0.7401                                  & 0.7082 & 0.8063 & 0.7888 & 0.7082 & 1.1440 & 0.9833 \\
    thresholding      & 4   & 1.0490                                      & 0.9773                                 & 0.9407                                  & 1.0490 & 0.8775 & 0.8508 & 1.0490 & 0.6503 & 0.5412 \\
                      & 5   & 1.1872                                      & 1.2277                                 & 1.1710                                  & 1.1872 & 1.2612 & 1.1953 & 1.1872 & 1.4187 & 1.3449 \\
                      & 6   & 1.5073                                      & 1.4707                                 & 1.4532                                  & 1.5073 & 1.3098 & 1.2662 & 1.5073 & 0.9962 & 0.8778 \\
    \hline
    weighted          & 3   & 0.6156                                      & 0.7986                                 & 0.8693                                  & 0.4278 & 0.5877 & 0.6703 & 0.1421 & 0.2683 & 0.1737 \\
    thresholding      & 4   & 1.1374                                      & 1.0759                                 & 1.1059                                  & 1.0998 & 1.1191 & 1.2348 & 0.2887 & 1.2678 & 1.1766 \\
                      & 5   & 1.5974                                      & 1.2916                                 & 1.2455                                  & 1.2473 & 0.9518 & 0.8866 & 0.1803 & 0.9454 & 0.8491 \\
                      & 6   & 1.5400                                      & 1.4918                                 & 1.4516                                  & 0.6675 & 1.5135 & 1.5276 & 0.0597 & 1.4901 & 1.3894 \\
    \hline
  \end{tabular}
  \caption{Empirical order of convergence of the $L^1$-error of the solution and the $L^1$-error of the expectation and variance of \eqref{eq:burgers_cauchy_det}, \eqref{eq:burgers_cauchy_init_det} using adaptive grid with uniform thresholding and weighted thresholding.}
  \label{tab:ex01_eoc}
\end{table*}

In Fig. \ref{fig:2d_cauchy_plot} and Fig. \ref{fig:2d_cauchy_diff_rv} we observe that we need less cells using weighted thresholding than uniform thresholding. To quantify these savings, we consider the ratio of the total number of cells between the uniform thresholding strategy and the weighted thresholding strategy, i.e.,
\begin{align}
  \label{eq:ratio_cell}
  \text{ratio} = \frac{N_{\text{total,uniform}}}{N_{\text{total,weighted}}},
\end{align}
where $N_{\text{total,uniform}}$ and $N_{\text{total,weighted}}$ are the total number of cells over all timesteps using uniform thresholding and weighted thresholding, respectively. In Fig. \ref{fig:burgers_cauchy_comparison_weighted_vs_non_weighted} we show the ratio for random variables $\xi_2,\dots,\xi_4$.

Although we have to fully refine parts of the shock using weighted thresholding for the normal distribution $\xi_2$ and the beta distributed random variable $\xi_3$ we need less than half as many cells than using grid adaptation with uniform thresholding.
Also, if the beta distributed random variable $\xi_4$ has a highly concentrated mass, again we need about  half the cells in the grid adaptation with weighted thresholding than grid adaptation with uniform thresholding. This is because the grid has a higher refinement level for $\bxi < 0.25$ using weighted thresholding compared to the adaptive grid using uniform thresholding due to the high concentrated mass of the random variable $\xi_4$. Thus, weighted thresholding does not only save cells in regions where the influence of probability is low, but also improves regions with high mass of the corresponding probability density.

\subsection{Euler equations with non-uniform uncertain initial values}
\label{subsec:euler_cauchy}
Here we consider the one-dimensional Euler equations for a perfect gas with uncertain initial data. In particular, we investigate Sod's shock tube problem \cite{Sod1978} assuming uncertain initial pressure on the left. For a realization $\omega_\xi$ of a random variable $\xi$ we introduce the conserved variable $\bar{\mathbf u}(t,\bx;\omega_\xi) := (\bar\rho, \bar\rho\bar v,\bar\rho \bar E)^T$ describing the conservation of mass, momentum and energy. Here, $\bar\rho \equiv \bar\rho(t,\bx;\omega_\xi)$, $\bar v \equiv \bar v(t,\bx;\omega_\xi)$ and $\bar E \equiv \bar E(t,\bx;\omega_\xi)$ denote the density, momentum and total energy, respectively. The total energy is the sum of kinetic and internal energy $\bar e \equiv \bar  e(t,\bx;\omega_\xi)$, i.e.,
\begin{align*}
  \bar E = \frac{1}{2}\bar v^2 + \bar e.
\end{align*}
Assuming a perfect gas the internal energy is determined by
\begin{align*}
  \bar e = \frac{\bar p}{(\gamma -1)\bar \rho}
\end{align*}
with $\gamma = 1.4$ \cite{Toro2009}.  We investigate the behavior of the system with uncertain initial pressure on the left
\begin{align}
  \label{eq:sto_initial_pressure}
  \bar p(0,\bx;\omega_\xi) = \begin{cases}
                               \omega_\xi + 0.2, & \quad \bx < 0.5 \\
                               0.1,              & \quad \bx > 0.5
                             \end{cases}.
\end{align}
Finally, the uncertain Riemann problem is determined by
\begin{subequations}
  \label{eq:sto_1d_euler}
  \begin{alignat}{2}
    \label{eq:sto_1d_euler_mass_cons}
     & \partial_t\bar\rho + \partial_\bx(\bar \rho \bar v)                          &  & = 0,                         \\
    \label{eq:sto_1d_euler_mom_cons}
     & \partial_t(\bar \rho \bar v) + \partial_\bx(\bar \rho \bar v^2 + \bar p)     &  & = 0, \quad \bx\in\R,\ t > 0, \\
    \label{eq:sto_1d_euler_energy_cons}
     & \partial_t(\bar \rho \bar E) + \partial_\bx(\bar v(\bar \rho \bar E+\bar p)) &  & = 0.
  \end{alignat}
\end{subequations}
and uncertain initial data
\begin{align}
  \label{eq:sto_ic_euler}
  \bar {\mathbf u}(0,\bx;\omega_\xi) = \begin{cases}
                                         (1.0,\ 0.0,\ 0.5 + 2.5 \omega_\xi)^T, & \quad \bx < 0.5 \\
                                         (0.125,\ 0.0,\ 0.25)^T,               & \quad \bx > 0.5
                                       \end{cases}.
\end{align}
For our investigations we consider the random variable $\xi \sim \mathcal B(2,5)$, where $\mathcal B(\alpha,\beta)$ is the beta distribution for values of $\alpha,\beta > 0$.

Again, we replace the stochastic parameter $\omega_\xi$ at the expense of an additional space dimension. Therefore, the conserved variable becomes $\mathbf u(t,\bx,\bxi) := (\rho, \rho v, \rho E)^T$ for  $(\bx,\bxi) \in \R\times[0,1]$, where $\rho\equiv\rho(t,\bx,\bxi)$, $v \equiv v(t,\bx,\bxi)$, $E\equiv E(t,\bx,\bxi)$ and $p \equiv p(t,\bx,\bxi)$.
The initial condition of the pressure is given by
\begin{align}
  \label{eq:det_initial_pressure}
  p(0,\bx,\bxi) = \begin{cases}
                    \bxi + 0.2, & \quad \bx < 0.5 \\
                    0.1,        & \quad \bx > 0.5
                  \end{cases},
  \quad\bxi\in[0,1].
\end{align}
Thus, the deterministic approach of the system \eqref{eq:sto_1d_euler} reads
\begin{subequations}
  \label{eq:det_1d_euler}
  \begin{alignat}{2}
    \label{eq:det_1d_euler_mass_cons}
     & \partial_t\rho + \partial_\bx(\rho v)           &  & = 0,                                           \\
    \label{eq:det_1d_euler_mom_cons}
     & \partial_t(\rho v) + \partial_\bx(\rho v^2 + p) &  & = 0, \quad (\bx,\bxi)\in\R\times[0,1],\ t > 0, \\
    \label{eq:det_1d_euler_energy_cons}
     & \partial_t(\rho E) + \partial_\bx(v(\rho E+p))  &  & = 0,
  \end{alignat}
\end{subequations}
with initial condition
\begin{align}
  \label{eq:det_ic_euler}
  \mathbf u(0,\bx,\bxi) = \begin{cases}
                            (1.0,\ 0.0,\ 0.5 + 2.5\bxi)^T, & \quad \bx < 0.5 \\
                            (0.125,\ 0.0,\ 0.25)^T ,       & \quad \bx > 0.5
                          \end{cases}
  ,\quad \bxi\in[0,1].
\end{align}

For this example we choose the maximum number of refinement levels $L=6$ and the number of the cells in the initial grid $N_{0,\bx} = N_{0,\bxi} = 8$ and set the CFL number to 0.1. For uniform thresholding we choose the constant $C_\text{heuristic} = 0.1$ as global threshold value \eqref{eq:global_threshold_error}. As in Sect. \ref{subsec:burgers_cauchy_smooth} for weighted thresholding we set $\hat C = C_\text{heuristic} / \max_{\bxi\in[0,1]}p_{\xi}(\bxi)$ as global threshold value, where $p_{\xi}$ is the corresponding probability density of the random variable $\xi\sim\mathcal B(2,5)$.

\paragraph{Computations with uniform thresholding.} The solution of \eqref{eq:det_1d_euler}, \eqref{eq:det_ic_euler} for the final time $t=0.2$ using MRA with uniform thresholding is presented in Figure \ref{fig:2d_euler_plot_uniform}. Each horizontal cut represents the solution of a single realization of the problem \eqref{eq:det_1d_euler}, \eqref{eq:det_ic_euler}. We observe that for higher initial pressure the shock wave, the contact wave and the rarefaction wave propagate faster. This leads to discontinuities in the stochastic direction for the leading shock wave. For the contact wave we only observe jumps in the conserved variables $(\rho, \rho v, \rho E)^T$ in the stochastic direction but no discontinuities for velocity $v$ and pressure $p$. Thus, the solution only exhibits discontinuities in the stochastic direction when there are discontinuities in the spatial direction, too.
Furthermore, the grid is only fully refined along the discontinuities caused by the shock wave and by the contact discontinuity. In smooth regions, the adaptive grid has a low refinement level, so the grid is refined only in regions with high local changes.

The stochastic moments of the deterministic approach \eqref{eq:det_1d_euler}, \eqref{eq:det_ic_euler} for the beta distributed random variable $\xi$ for the density $\rho$, momentum $\rho v$, density of energy $\rho E$, pressure $p$ and velocity $v$, respectively, are shown in Figure \ref{fig:euler_beta_2_5_moments}. Obviously, the moments are smooth where the discontinuities are smoothened due to the averaging process.
\begin{figure}[htbp]
  \centering
  \includegraphics[width=.33\textwidth]{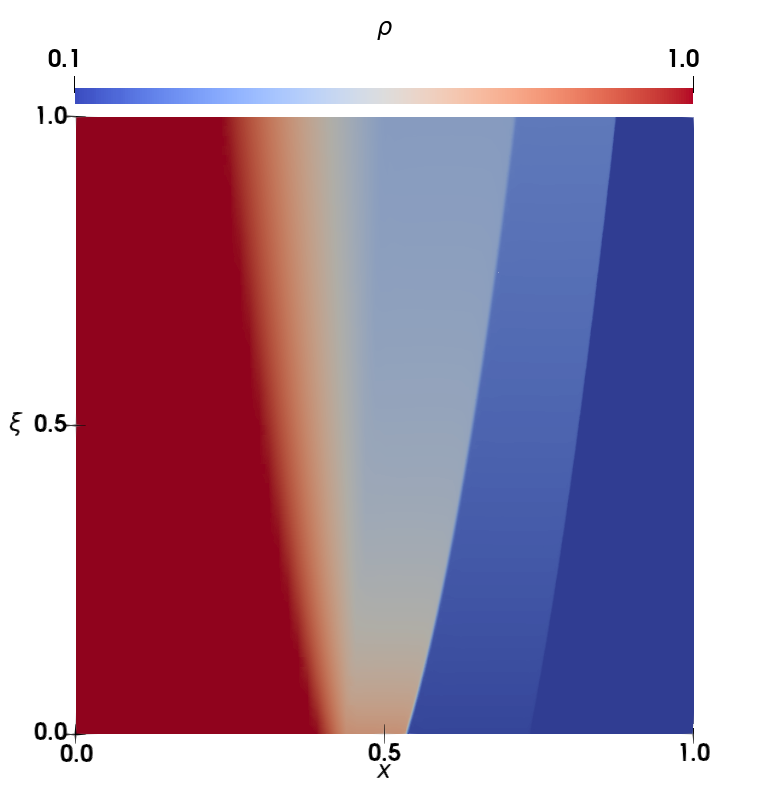}\hfill
  \includegraphics[width=.33\textwidth]{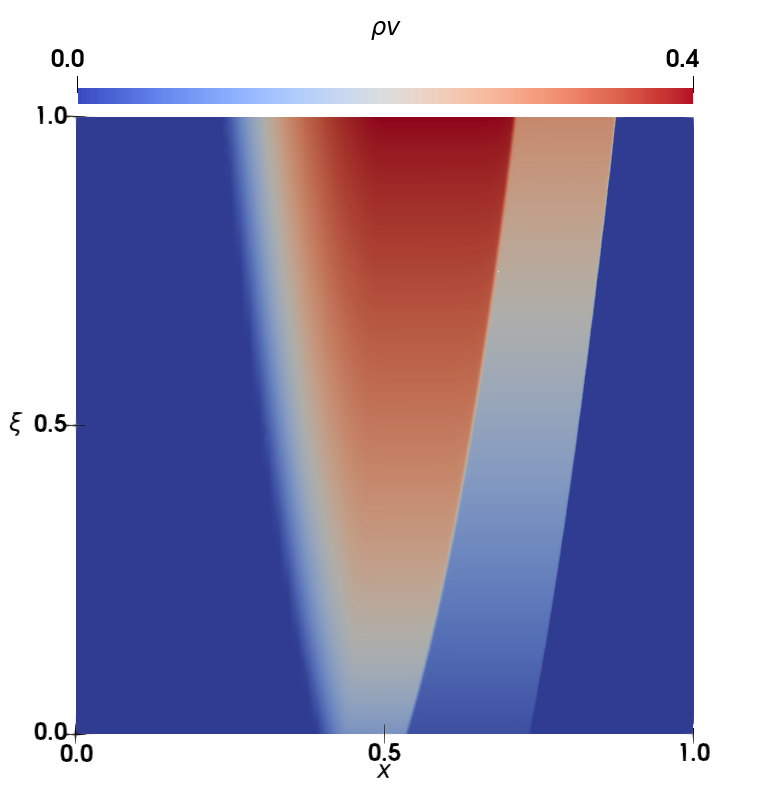}\hfill
  \includegraphics[width=.33\textwidth]{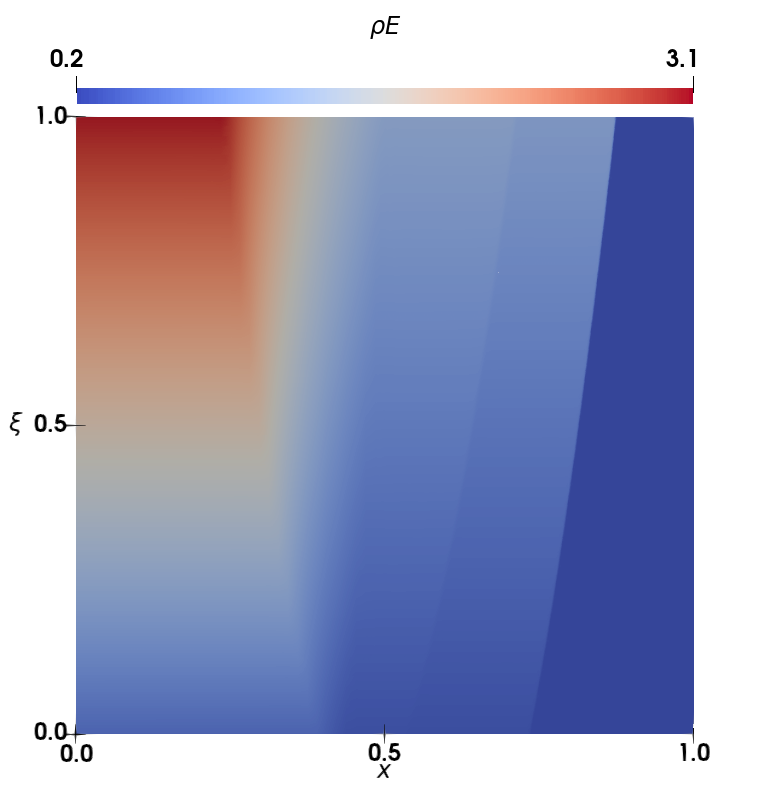}\\
  \includegraphics[width=.33\textwidth]{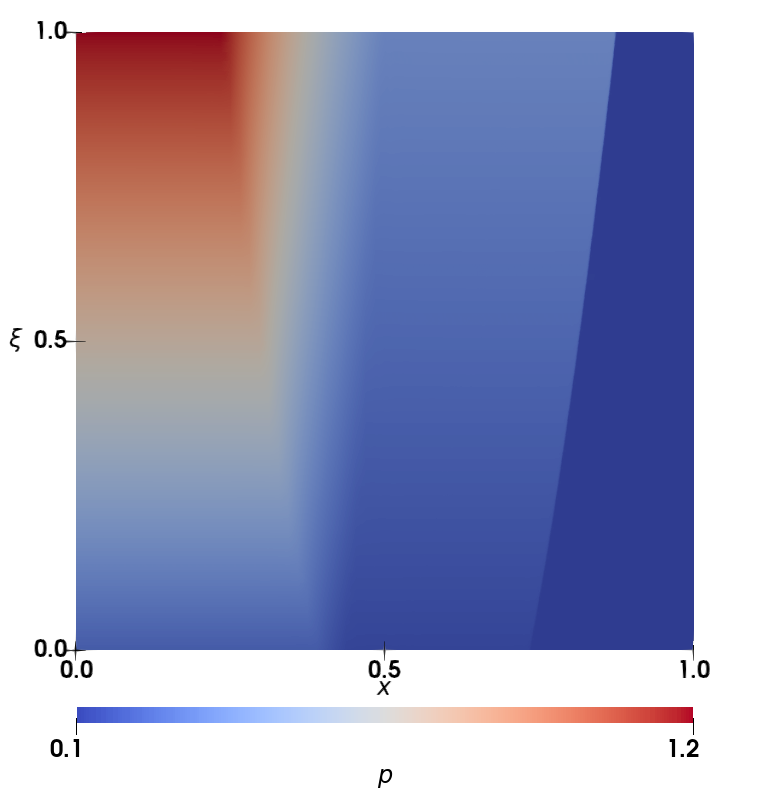}\hfill
  \includegraphics[width=.33\textwidth]{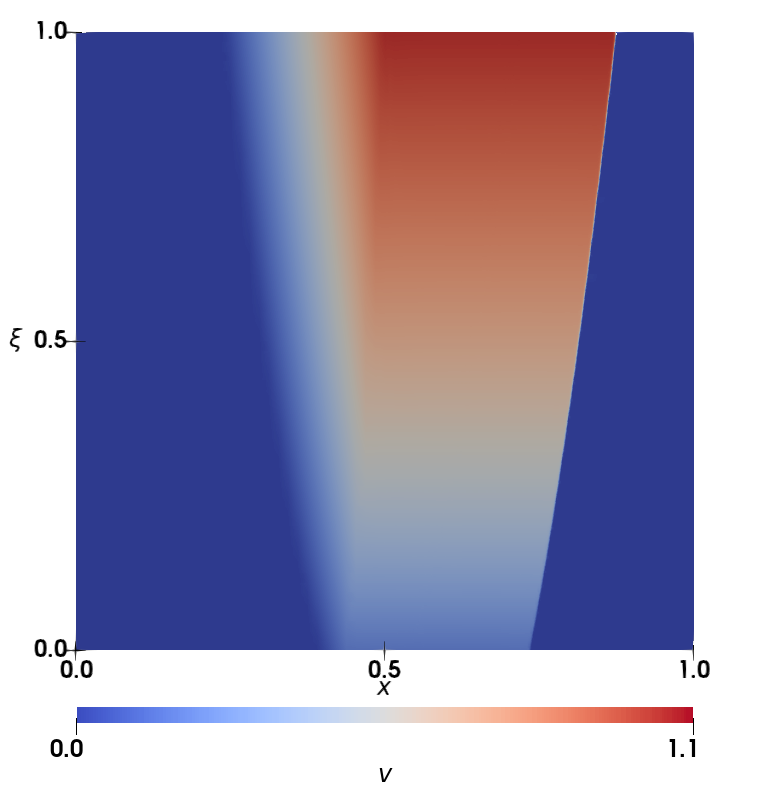}\hfill
  \includegraphics[width=.33\textwidth]{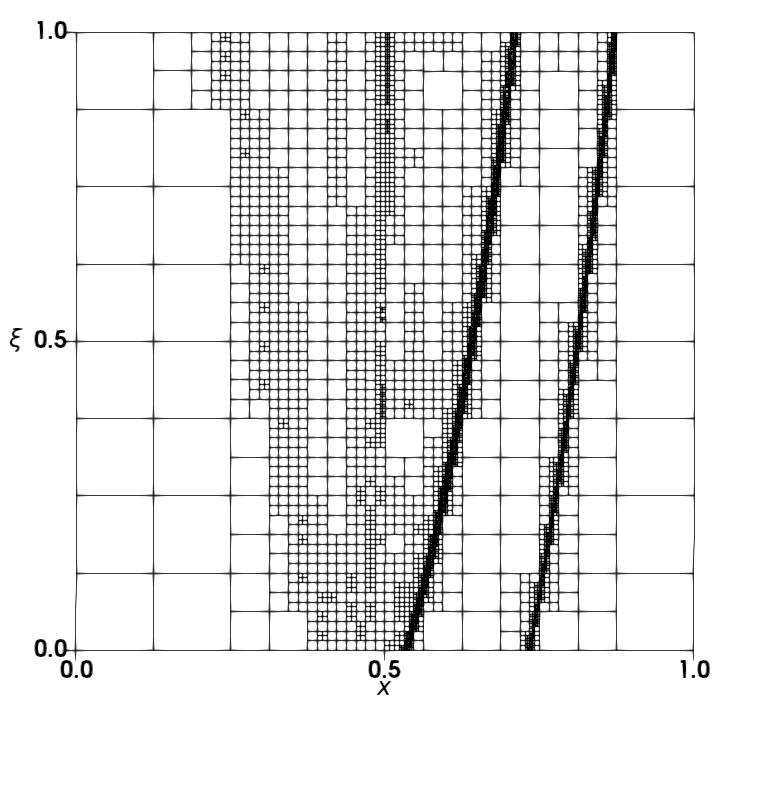}
  \caption{Solution for the Euler equations \eqref{eq:det_1d_euler} with uncertain initial data \eqref{eq:det_ic_euler} at time $t=0.2$ with $L=6$ refinement levels using uniform thresholding. Top row (from left to right): density $\rho$; momentum $\rho v$; density of energy $\rho E$. Bottom row (from left to right): pressure $p$; velocity $v$; corresponding adaptive grid.}
  \label{fig:2d_euler_plot_uniform}
\end{figure}

\begin{figure}[htbp]
  \centering
  \includegraphics[width=.33\textwidth]{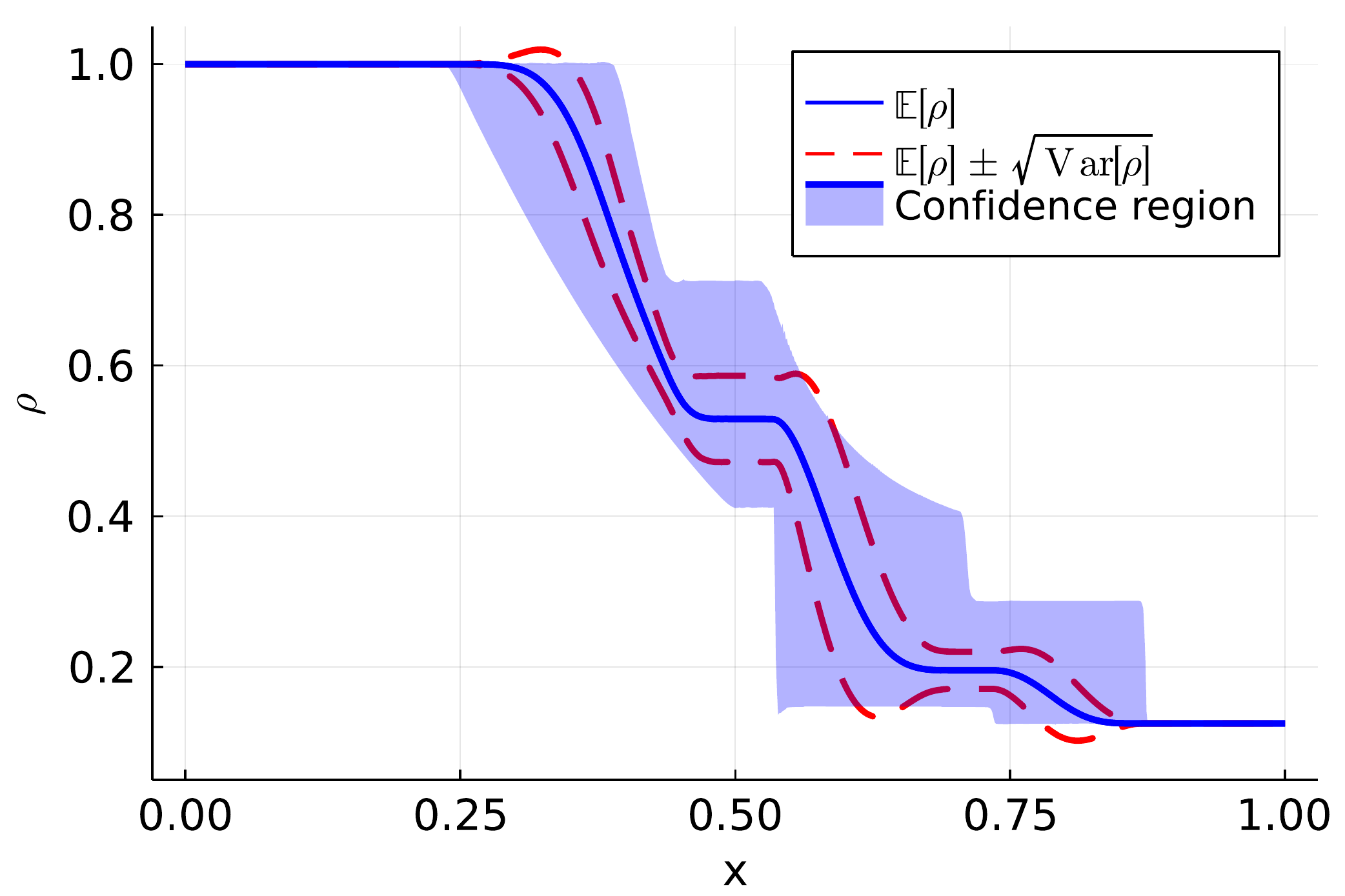}\hfill
  \includegraphics[width=.33\textwidth]{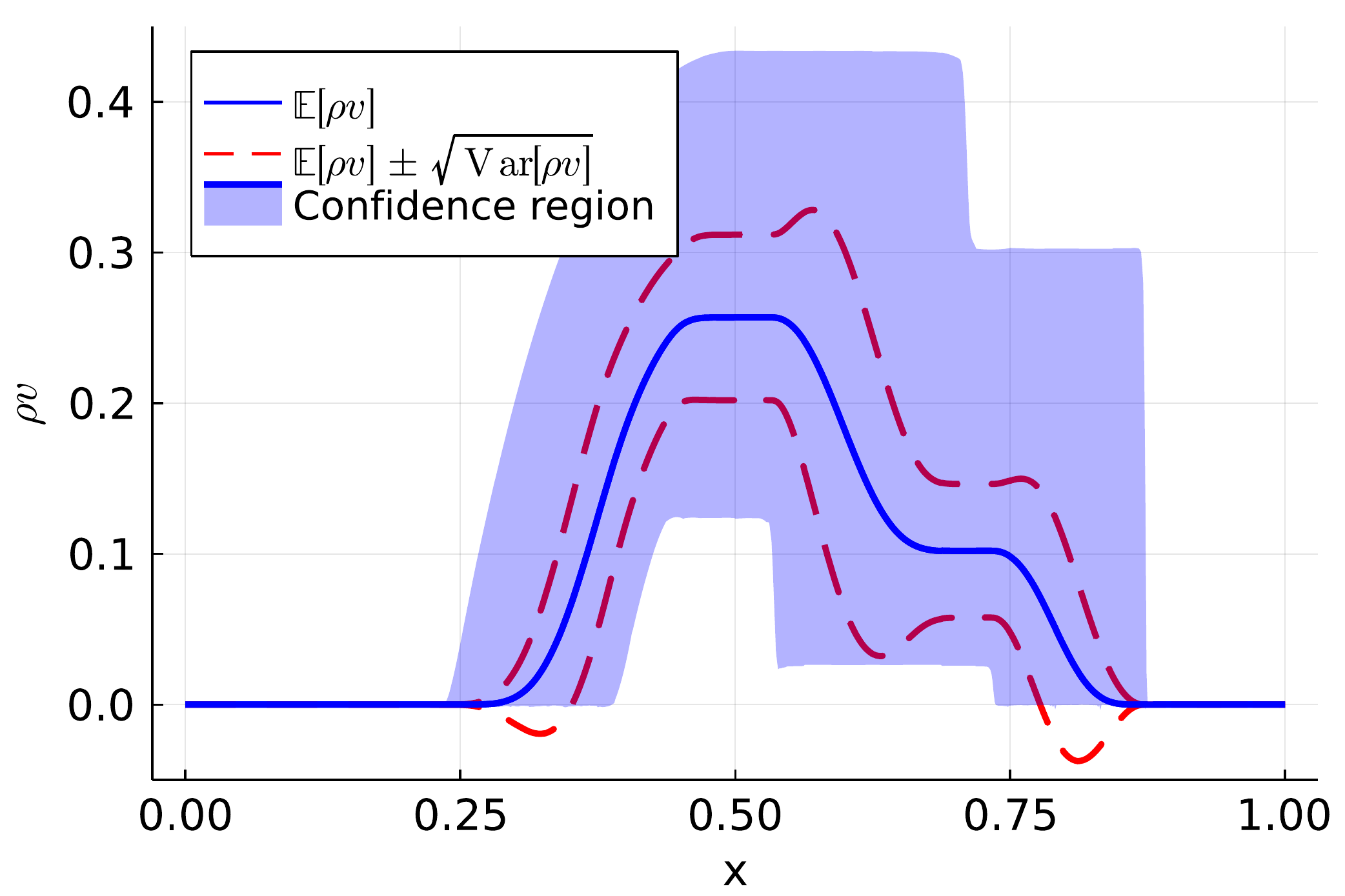}\hfill
  \includegraphics[width=.33\textwidth]{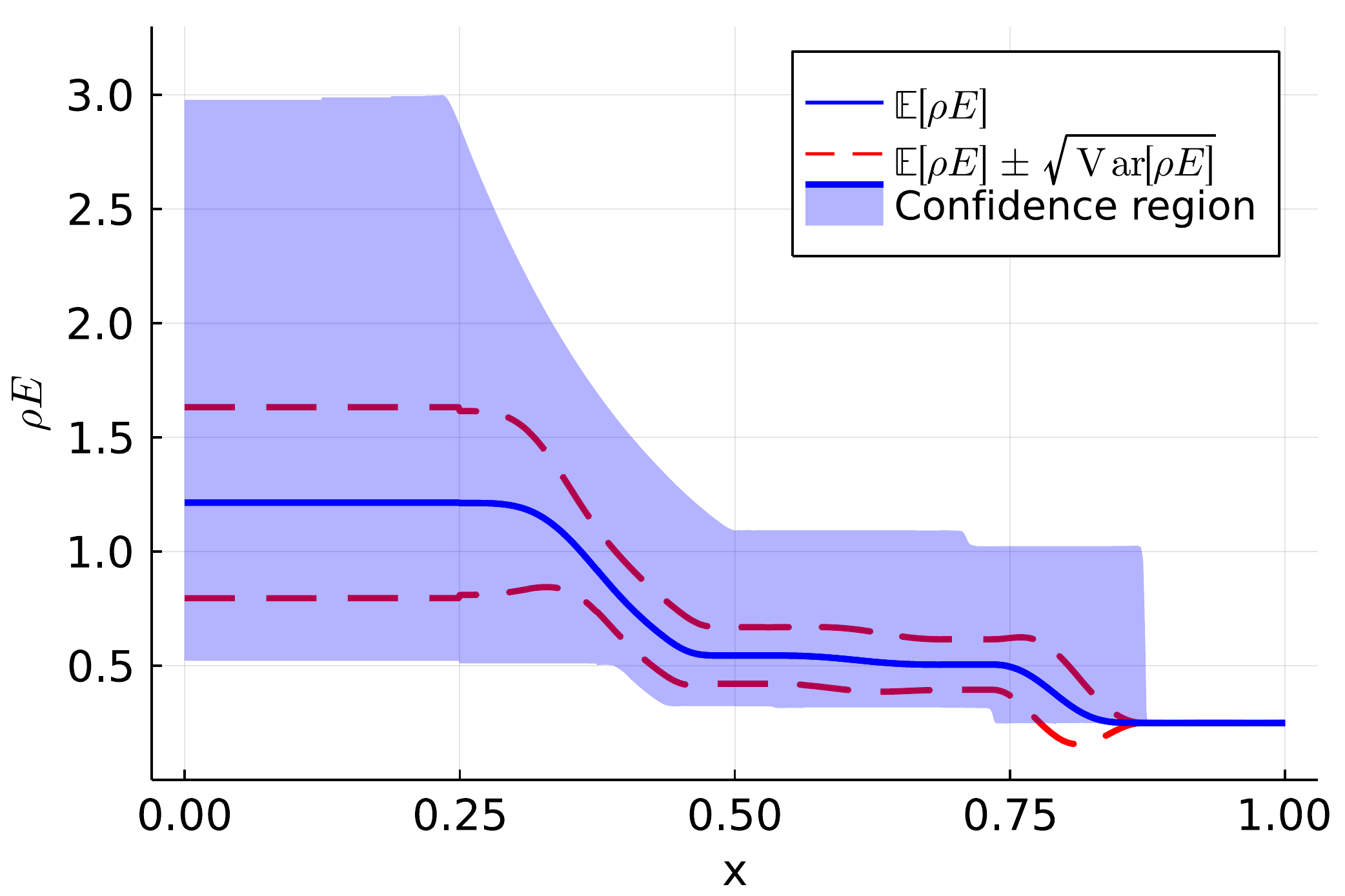}\\
  \includegraphics[width=.33\textwidth]{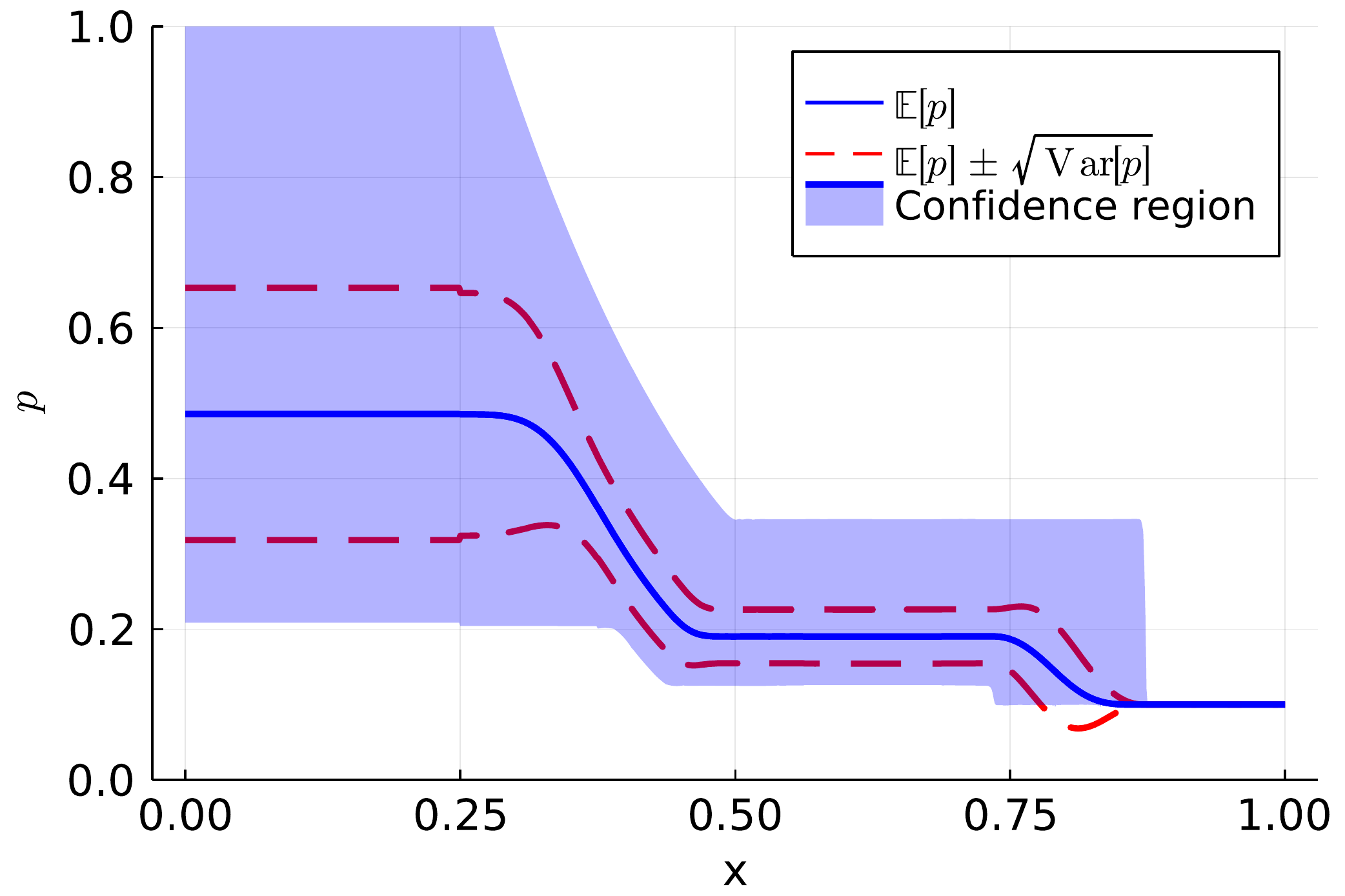}
  \includegraphics[width=.33\textwidth]{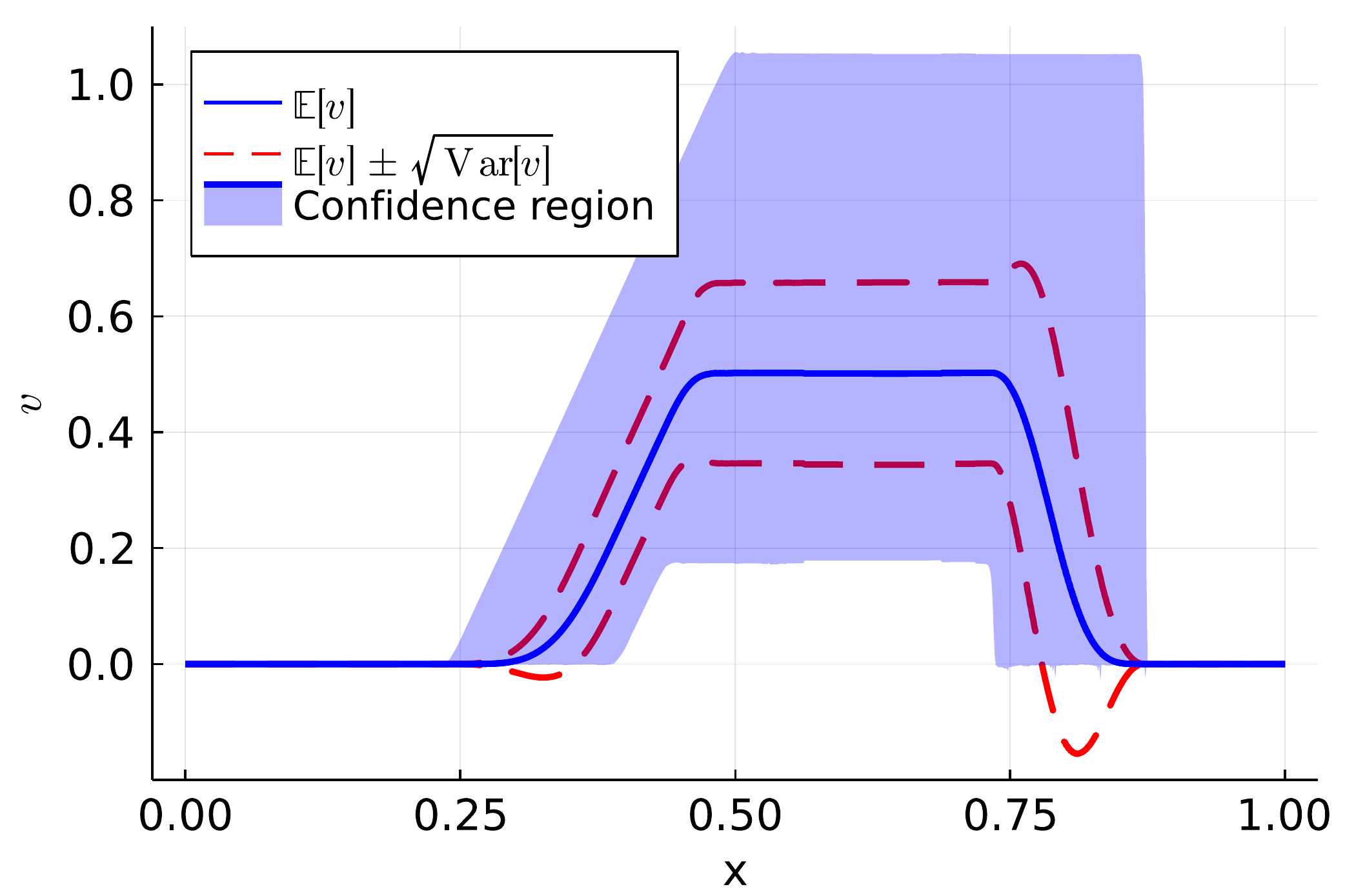}\hfill
  \caption{Stochastic moments for the solution of the Euler equations \eqref{eq:det_1d_euler}, \eqref{eq:det_ic_euler} for the random variable $\xi\sim\mathcal B(2,5)$ at time $t=0.2$. Computations are performed with uniform thresholding and $L=6$ refinement levels. Top row (from left to right): density $\rho$; momentum $\rho v$; density of energy $\rho E$. Bottom row (from left to right): pressure $p$; velocity $v$.}
  \label{fig:euler_beta_2_5_moments}
\end{figure}

\paragraph{Computations with weighted thresholding.}
The numerical simulation of \eqref{eq:det_1d_euler}, \eqref{eq:det_ic_euler} using the novel thresholding strategy is shown in Figure \ref{fig:2d_euler_plot_weighted} for the beta distributed random variable $\xi$. As illustrated in Sect. \ref{sec:MRA-moments}, grid adaptation now depends strongly on the underlying probability density of the corresponding random variable. Since the random variable $\xi$ has a higher mass concentration for $\bxi < 0.5$, the grid is more refined than for $\bxi > 0.5$. Moreover, grid refinement is triggered more along the shock than along the region of the contact discontinuity. Although we do not have high stochastic influence for $\bxi > 0.5$, the shock is fully refined up to $\bxi = 0.75$ whereby grid adaptation is not triggered for the rarefaction wave. Our novel thresholding strategy thus takes into account both the stochasticity and the local behavior of the solution itself. This leads to an adaptive grid with significantly fewer cells than with uniform thresholding, see Figure \ref{fig:2d_euler_plot_uniform}. Due to the novel adaptation strategy with respect to the stochastic moments, the solution itself may look poor. This is especially the case, for example, for the velocity $v$ and the momentum $\rho v$, respectively, along the shock.

\begin{figure}[htbp]
  \centering
  \includegraphics[width=.33\textwidth]{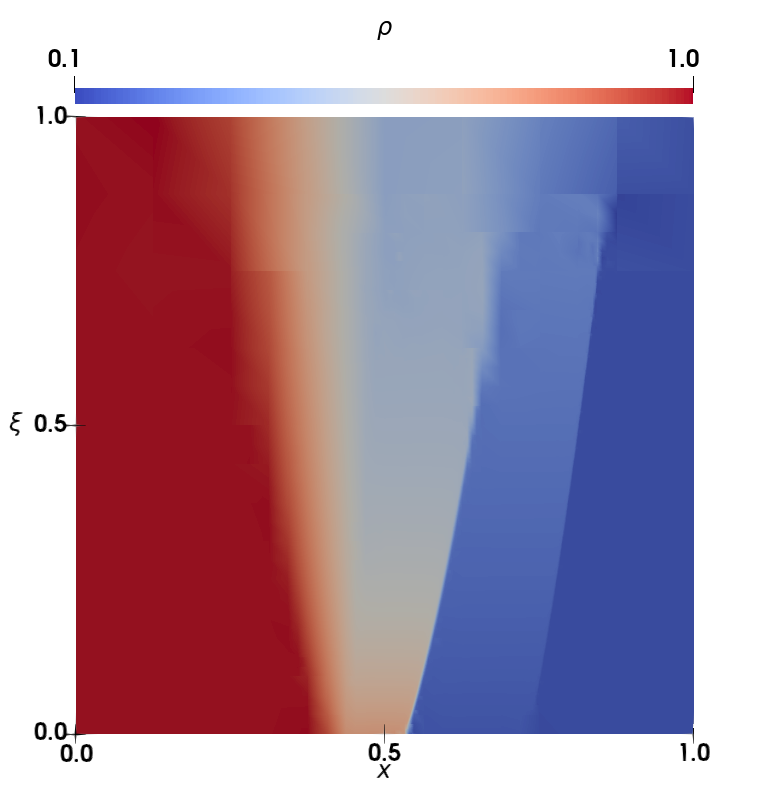}\hfill
  \includegraphics[width=.33\textwidth]{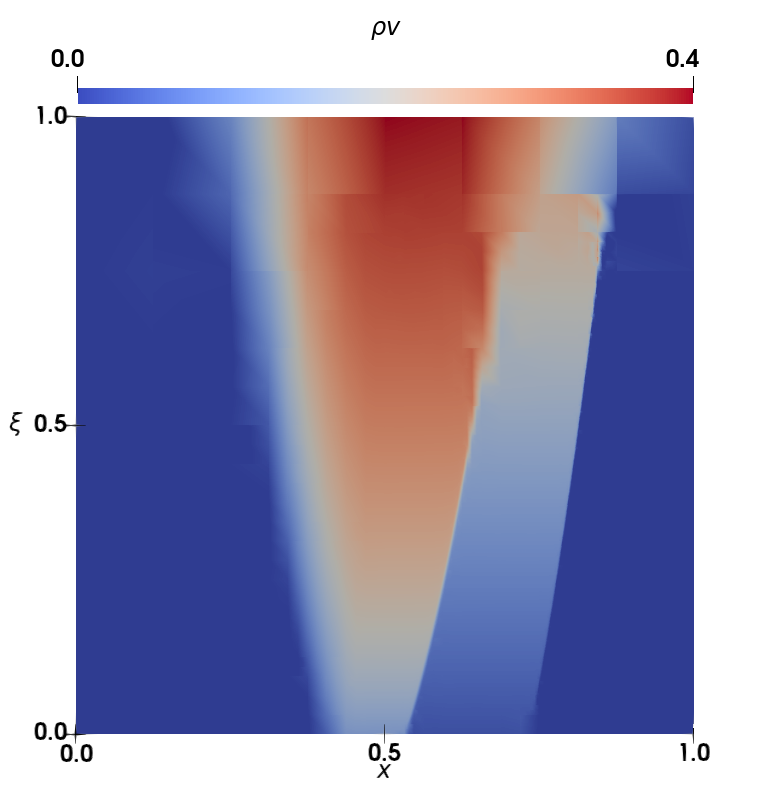}\hfill
  \includegraphics[width=.33\textwidth]{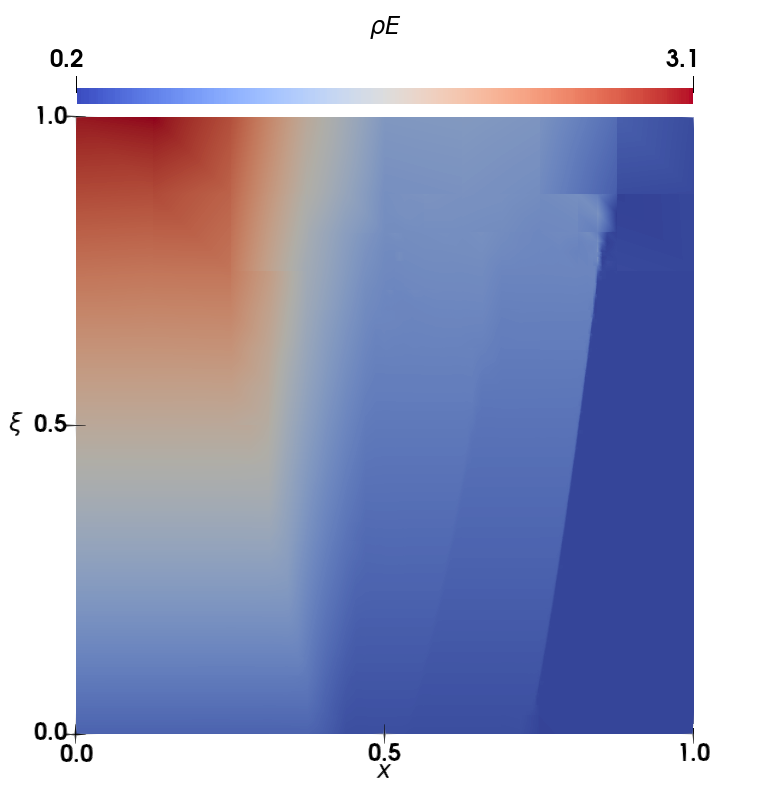}\\
  \includegraphics[width=.33\textwidth]{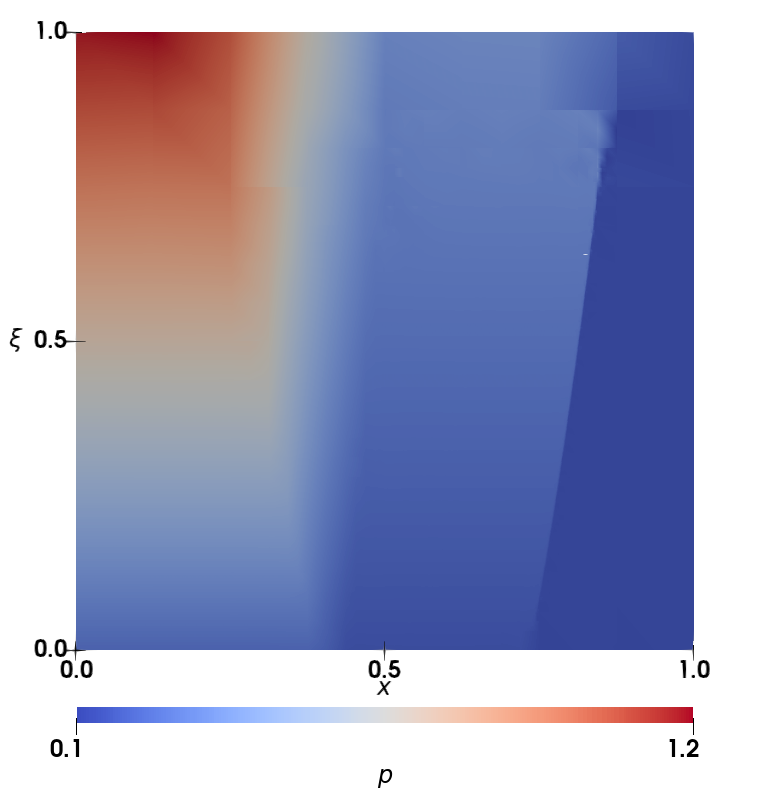}\hfill
  \includegraphics[width=.33\textwidth]{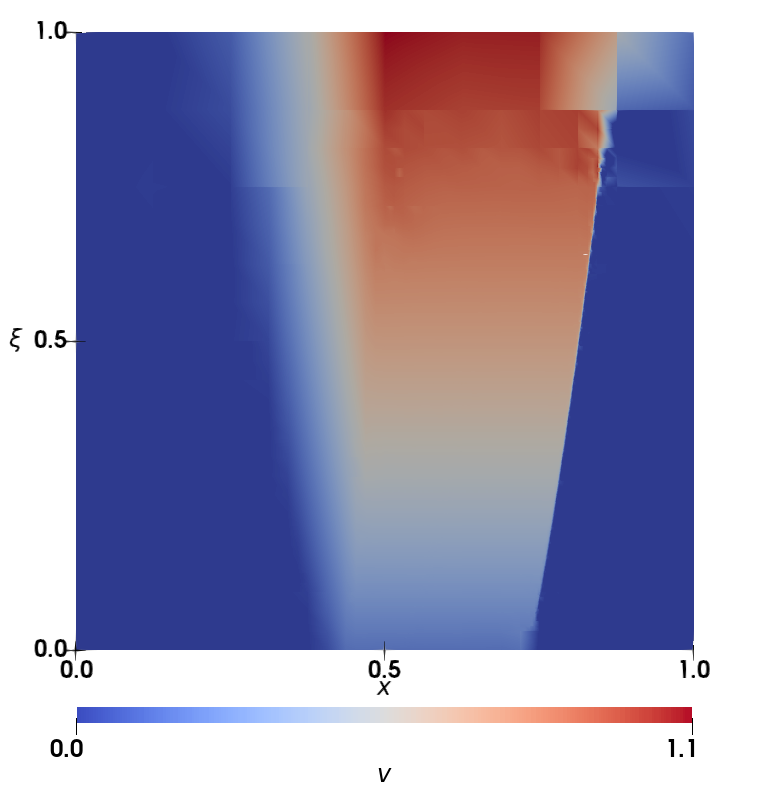}\hfill
  \includegraphics[width=.33\textwidth]{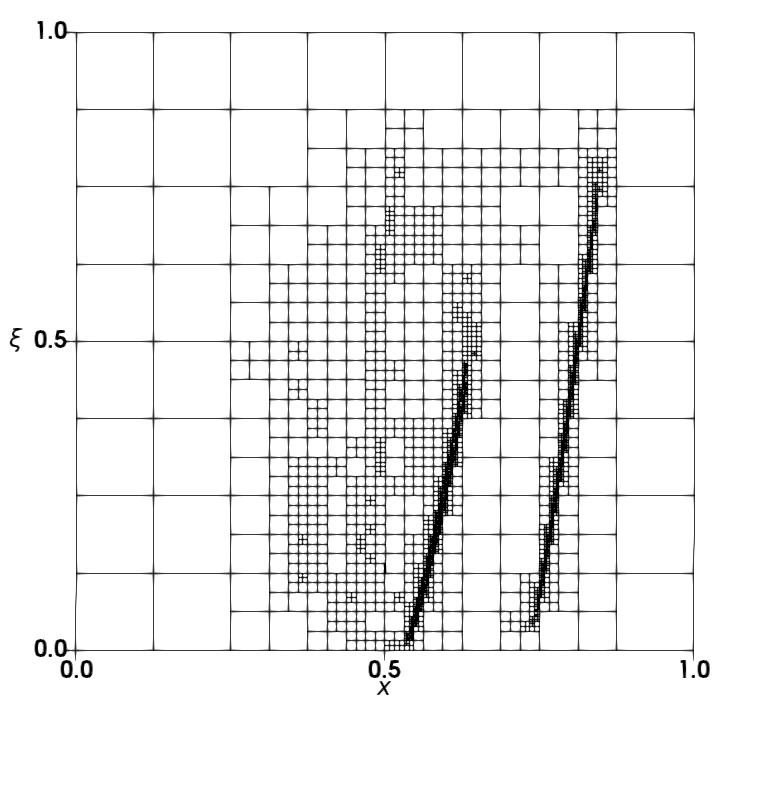}
  \caption{Solution for the Euler equations \eqref{eq:det_1d_euler} with uncertain initial data \eqref{eq:det_ic_euler} at time $t=0.2$ with $L=6$ refinement levels using weighted thresholding for the random variable $\xi\sim\mathcal B(2,5)$. Top row (from left to right): density $\rho$; momentum $\rho v$; density of energy $\rho E$. Bottom row (from left to right): pressure $p$; velocity $v$; corresponding adaptive grid.}
  \label{fig:2d_euler_plot_weighted}
\end{figure}

\paragraph{Comparison of uniform and weighted thresholding.}
We compare the $L^1$-error of the stochastic moments between our novel thresholding strategy and the classical thresholding method. As reference solution we performed a solution with $L=10$ refinement levels and uniform thresholding. These results are presented in Figure \ref{fig:euler_l1_err_comp}. With both methods, the $L^1$-error of the stochastic moments decreases by the empirical order of accuracy of about $1$ for all state variables, cf.~Table \ref{tab:ex02_eoc}. The $L^1$-error of the stochastic moments using weighted thresholding is of the same order of magnitude as the $L^1$-error of the moments with uniform thresholding and therefore comparable. If weighted thresholding is used instead, the solution itself has a poor convergence rate because the adaptation process is optimized for the stochastic moments. This can be seen, for example, in the $L^1$-error of the velocity which reflects the poor behavior of the solution in Figure \ref{fig:2d_euler_plot_weighted}. In addition, in Figure \ref{fig:euler_l1_err_comp} we show the ratio of the number of cells \eqref{eq:ratio_cell} between weighted thresholding and uniform thresholding. Using our novel strategy saves more than half the cells than using uniform thresholding.
With increasing refinement levels this ratio decreases. Figure \ref{fig:2d_euler_weighted_diff_grids} shows the adaptive grids with weighted thresholding for different maximum refinement levels $L=2,4,6$. As the refinement level increases, both the grids at the shock and the contact wave become more and more refined in the stochastic direction since the local threshold value \eqref{eq:locthreshval} becomes smaller with increasing refinement levels. Thus, cells with non-significant details with respect to the probability density function may become significant for higher refinement levels and the corresponding cell has therefore to be refined. This effect is problem-dependent and may not occur at all, as for example in Sect. \ref{subsec:burgers_cauchy_smooth}, cf. Fig. \ref{fig:burgers_cauchy_comparison_weighted_vs_non_weighted}.

\begin{figure}[htbp]
  \centering
  \begin{subfigure}[b]{0.32\textwidth}
    \begin{adjustbox}{width=\linewidth}
      \includegraphics{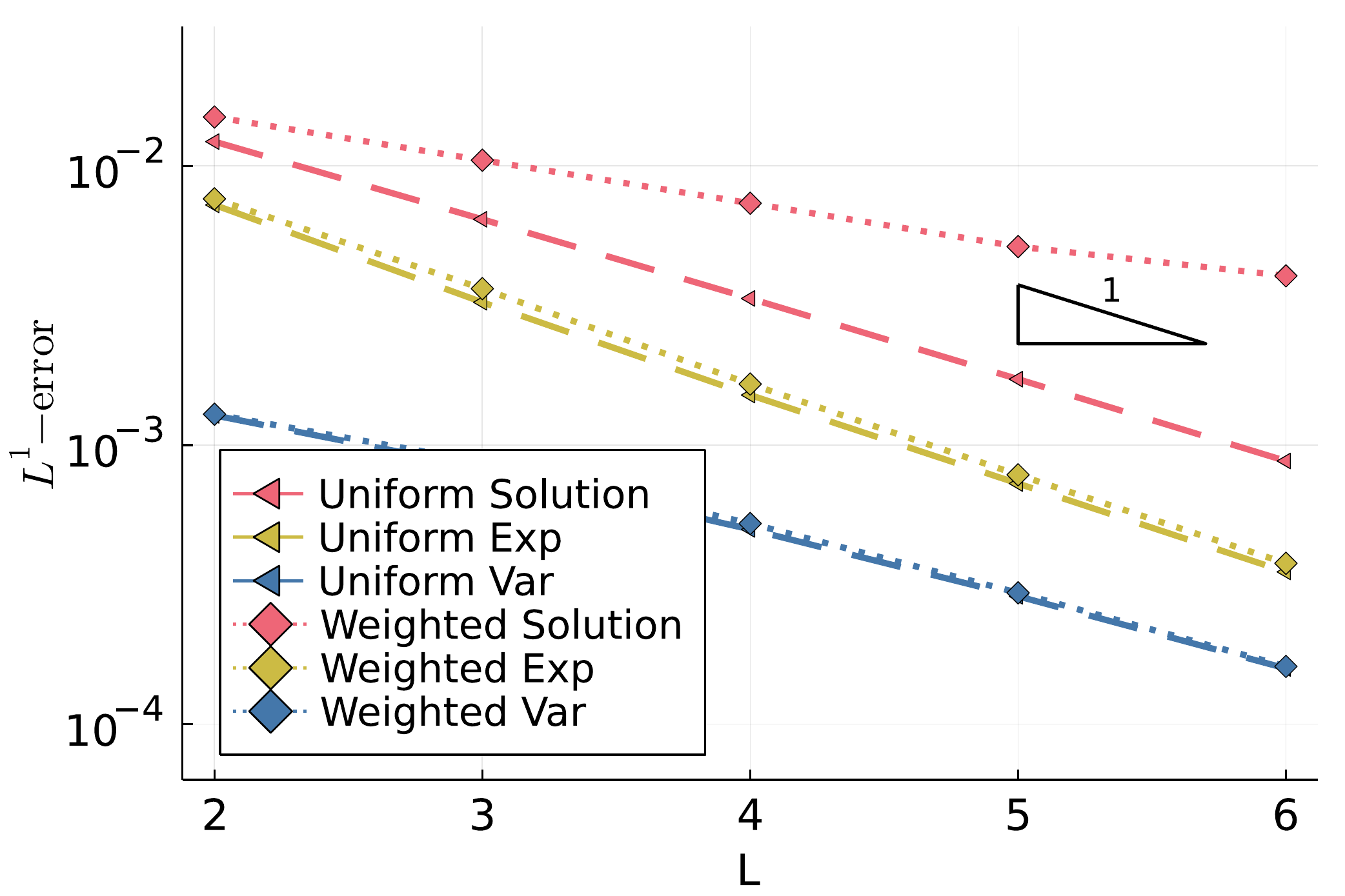}
    \end{adjustbox}
    \caption{Density $\rho$}
  \end{subfigure}
  \hfill
  \begin{subfigure}[b]{0.32\textwidth}
    \begin{adjustbox}{width=\linewidth}
      \includegraphics{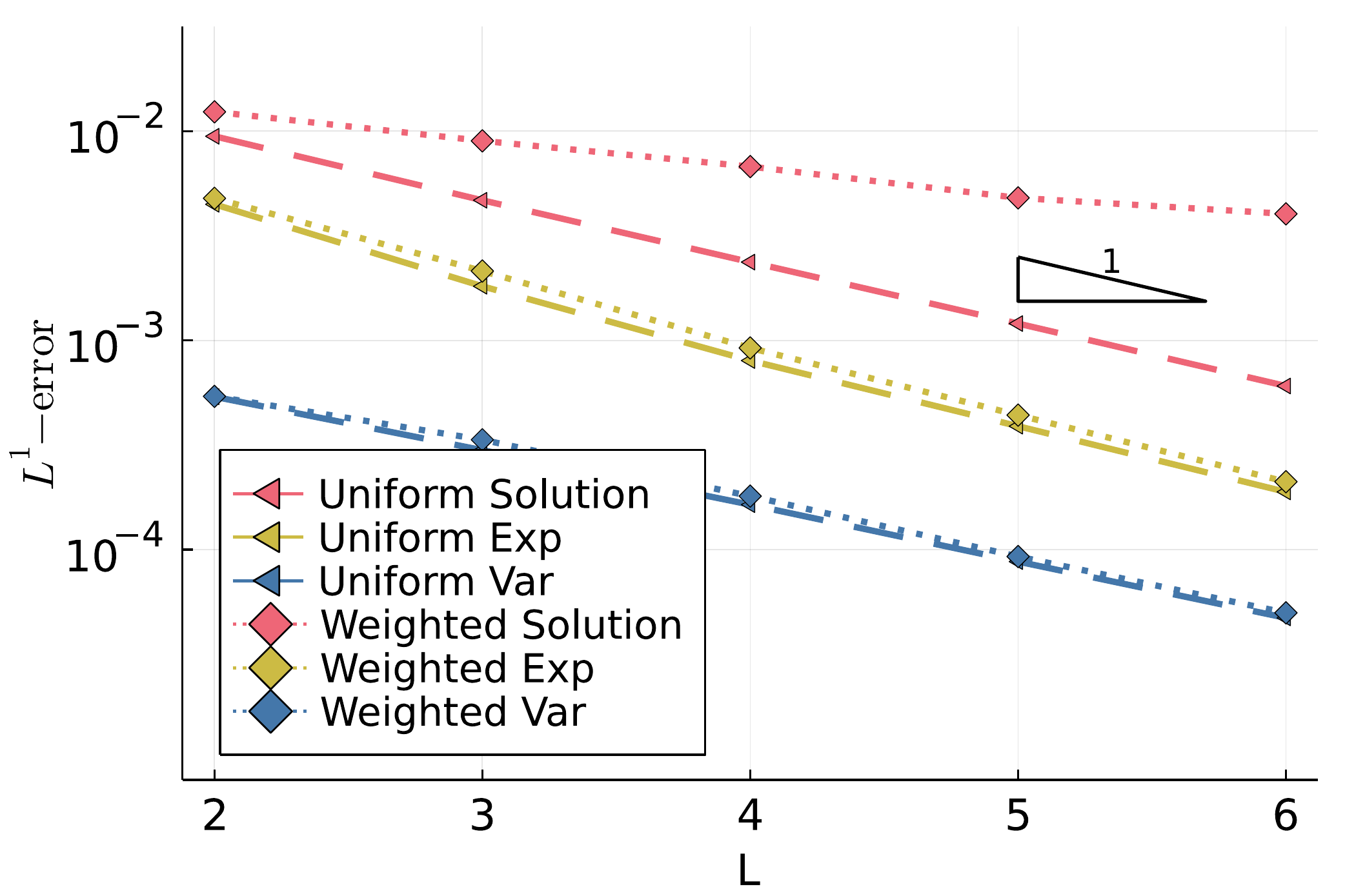}
    \end{adjustbox}
    \caption{Momentum $\rho v$}
  \end{subfigure}
  \hfill
  \begin{subfigure}[b]{0.32\textwidth}
    \begin{adjustbox}{width=\linewidth}
      \includegraphics{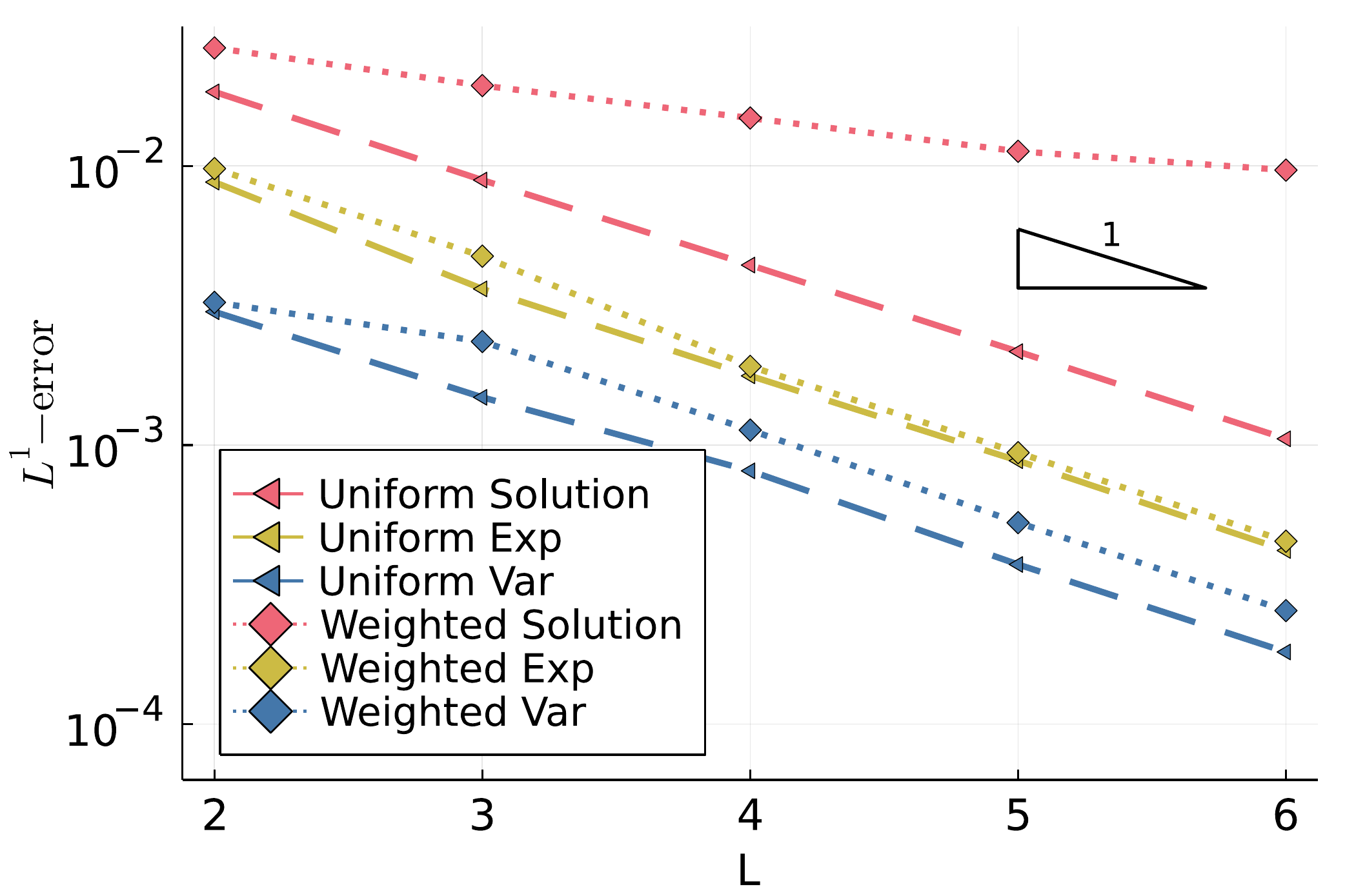}
    \end{adjustbox}
    \caption{Density of energy $\rho E$}
  \end{subfigure}
  \\
  \begin{subfigure}[b]{0.32\textwidth}
    \begin{adjustbox}{width=\linewidth}
      \includegraphics{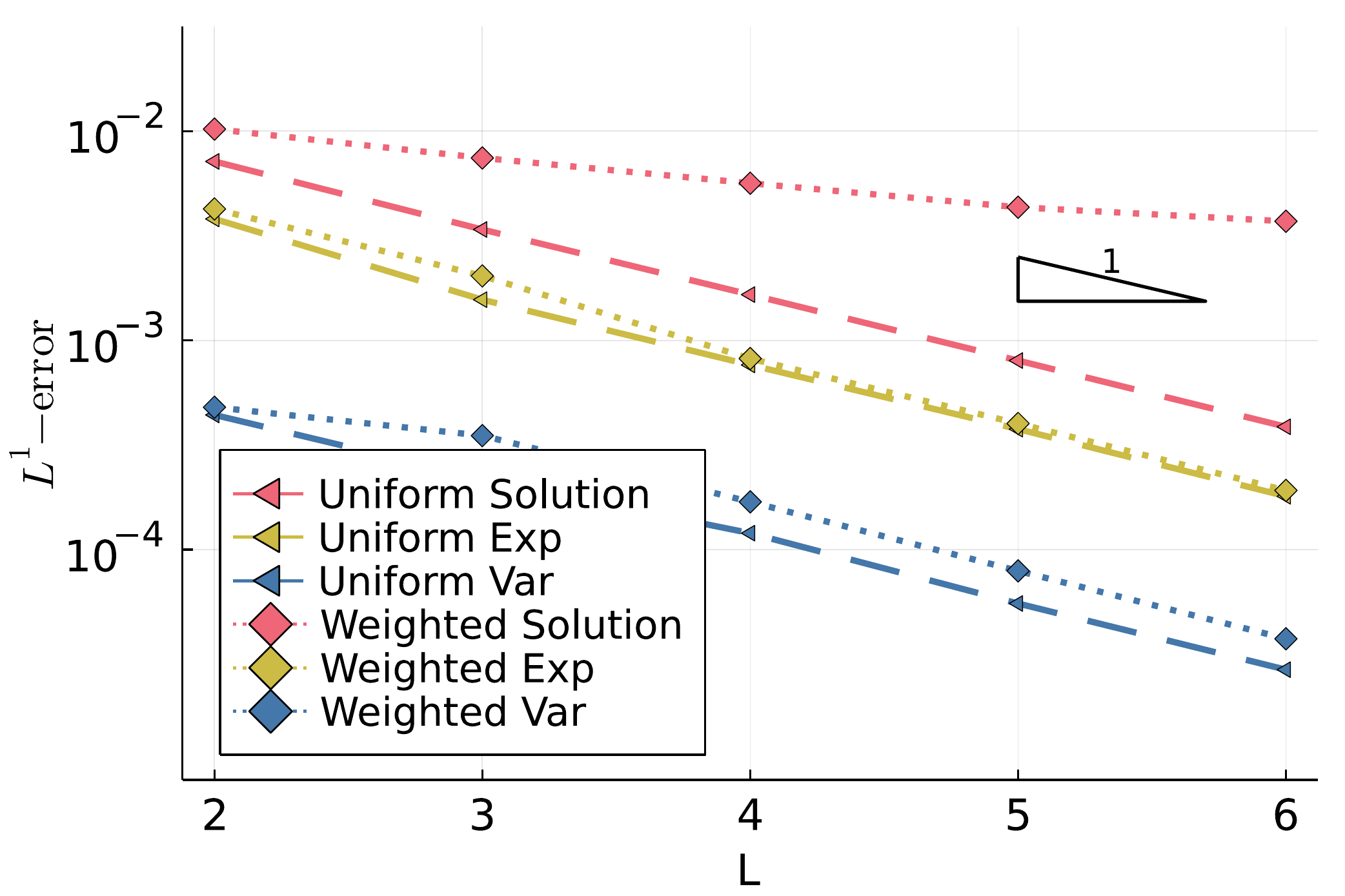}
    \end{adjustbox}
    \caption{Pressure $p$}
  \end{subfigure}
  \hfill
  \begin{subfigure}[b]{0.32\textwidth}
    \begin{adjustbox}{width=\linewidth}
      \includegraphics{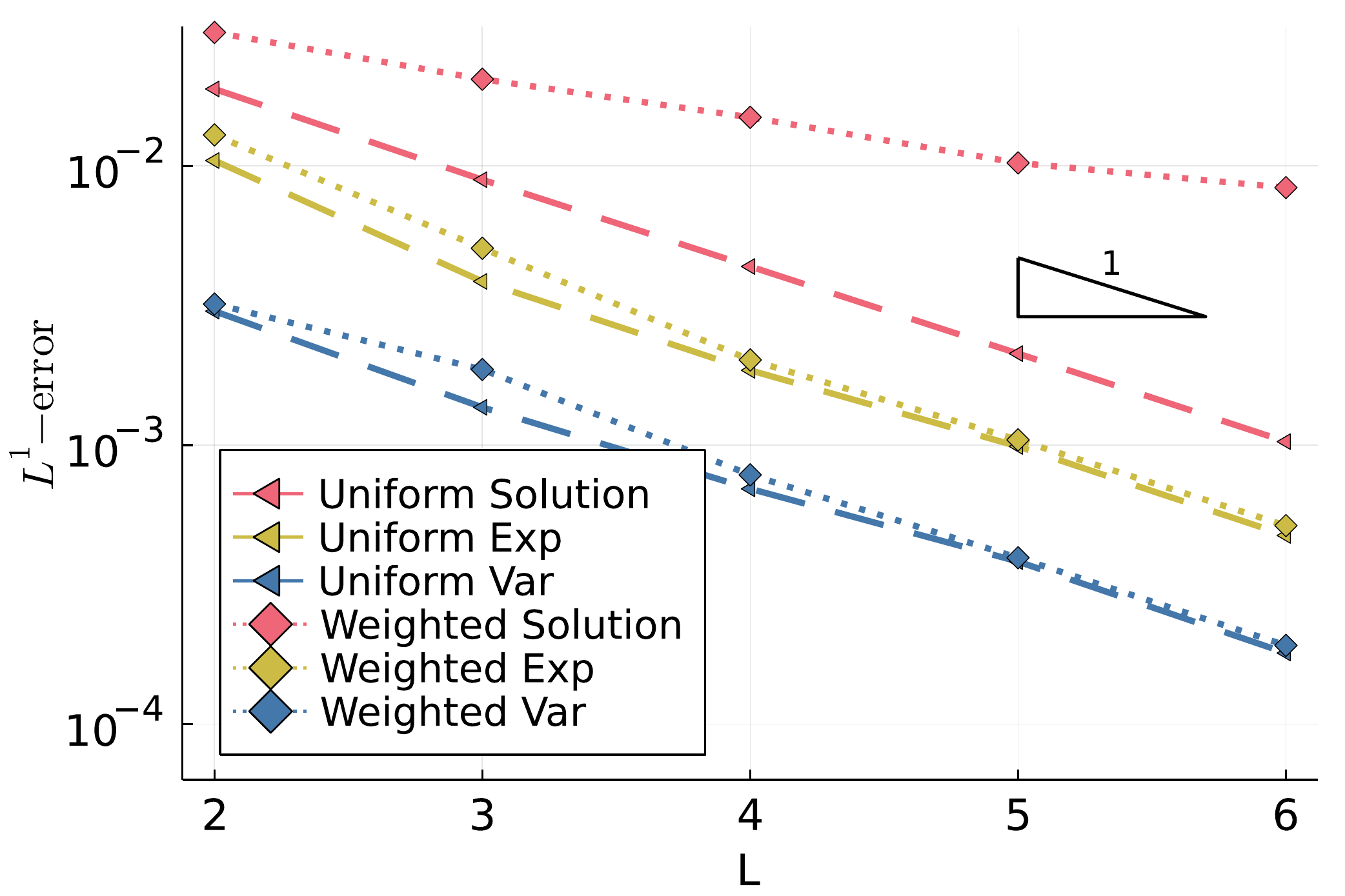}
    \end{adjustbox}
    \caption{Velocity $v$}
  \end{subfigure}
  \hfill
  \begin{subfigure}[b]{0.32\textwidth}
    \begin{adjustbox}{width=\linewidth}
      \includegraphics{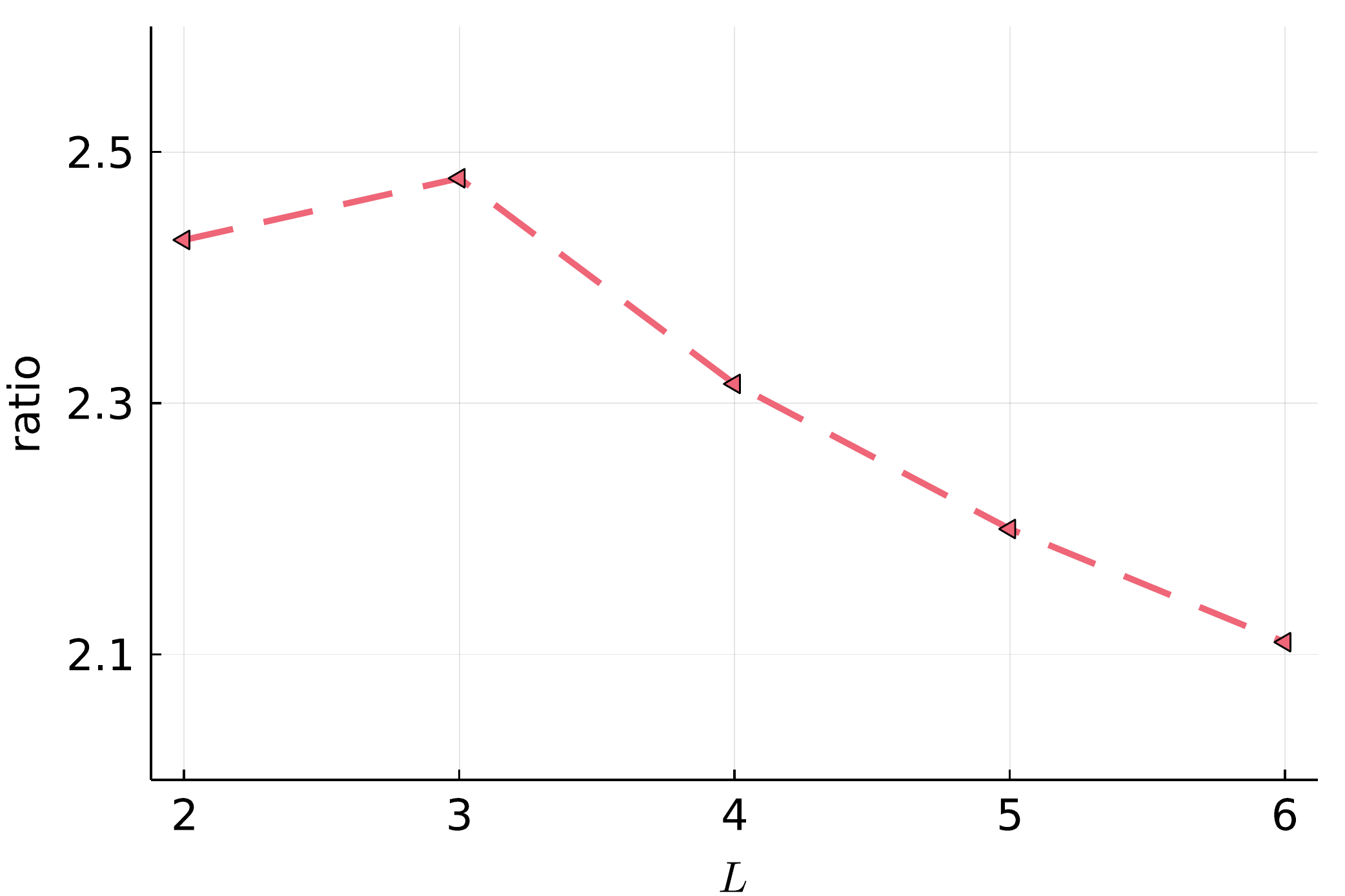}
    \end{adjustbox}
    \caption{Ratio number of cells}
  \end{subfigure}
  \caption{$L^1$-error of the stochastic moments and the solution of \eqref{eq:det_1d_euler}, \eqref{eq:det_ic_euler} at time $t=0.2$ using both uniform and weighted thresholding up to $L=6$ refinement levels and the ratio of the number of cells.}
  \label{fig:euler_l1_err_comp}
\end{figure}

\begin{table*}[htbp]
  \centering
  \setlength{\extrarowheight}{.6em}
  \setlength{\tabcolsep}{0.8em}
  \scalebox{0.683}{
    \begin{tabular}{|l|c||c|c|c|c|c|c|c|c|c|c|c|c|c|c|c|}
      \hline
      \multirow{2}{*}{} &     & \multicolumn{3}{c|}{Density $\rho$} & \multicolumn{3}{c|}{Momentum $\rho v$} & \multicolumn{3}{c|}{Density of energy $\rho E$} & \multicolumn{3}{c|}{Velocity $v$} & \multicolumn{3}{c|}{Pressure $p$}                                                                                           \\
      \cline{2-17}
                        & $L$ & sol                                 & exp                                    & var                                             & sol                               & exp                               & var    & sol    & exp    & var    & sol    & exp    & var    & sol    & exp    & var    \\
      \hline
      uniform           & 3   & 0.9246                              & 1.1590                                 & 0.6287                                          & 1.0154                            & 1.3011                            & 0.8481 & 1.0498 & 1.2710 & 1.0183 & 1.0800 & 1.4388 & 1.1449 & 1.0804 & 1.2820 & 1.0164 \\
                        & 4   & 0.9427                              & 1.1043                                 & 0.7290                                          & 0.9815                            & 1.1791                            & 0.8649 & 1.0122 & 1.0349 & 0.8752 & 1.0330 & 1.0577 & 0.9696 & 1.0331 & 1.0354 & 0.8604 \\
                        & 5   & 0.9604                              & 1.0533                                 & 0.7958                                          & 0.9754                            & 1.0415                            & 0.9037 & 1.0298 & 1.0105 & 1.1142 & 1.0370 & 0.9064 & 0.8662 & 1.0468 & 1.0184 & 1.1164 \\
                        & 6   & 0.9738                              & 1.0515                                 & 0.8587                                          & 0.9905                            & 1.0428                            & 0.8933 & 1.0378 & 1.0685 & 1.0419 & 1.0480 & 1.0594 & 1.0882 & 1.0507 & 1.0656 & 1.0503 \\
      \hline
      weighted          & 3   & 0.5136                              & 1.0689                                 & 0.5683                                          & 0.4608                            & 1.1541                            & 0.6885 & 0.4489 & 1.0425 & 0.4664 & 0.5565 & 1.3490 & 0.7791 & 0.4579 & 1.0629 & 0.4504 \\
      thresholding      & 4   & 0.5125                              & 1.1354                                 & 0.7327                                          & 0.4081                            & 1.2206                            & 0.8940 & 0.3858 & 1.3123 & 1.0538 & 0.4538 & 1.3292 & 1.2542 & 0.4014 & 1.3119 & 1.0517 \\ &5&0.5159&1.0804&0.8249&0.4968&1.0658&0.9590&0.3963&1.0269&1.1015&0.5427&0.9513&0.9869&0.3813&1.0308&1.0955\\
                        & 6   & 0.3485                              & 1.0525                                 & 0.8763                                          & 0.2536                            & 1.0575                            & 0.8934 & 0.2235 & 1.0537 & 1.0474 & 0.2941 & 1.0211 & 1.0431 & 0.2228 & 1.0621 & 1.0763 \\
      \hline
    \end{tabular}}
  \caption{Empirical order of convergence of the $L^1$-error of the solution and the $L^1$-error of the expectation and variance of \eqref{eq:det_1d_euler}, \eqref{eq:det_ic_euler} using adaptive grid with uniform thresholding and weighted thresholding.}
  \label{tab:ex02_eoc}
\end{table*}

\begin{figure}[htbp]
  \centering
  \includegraphics[width=.33\textwidth]{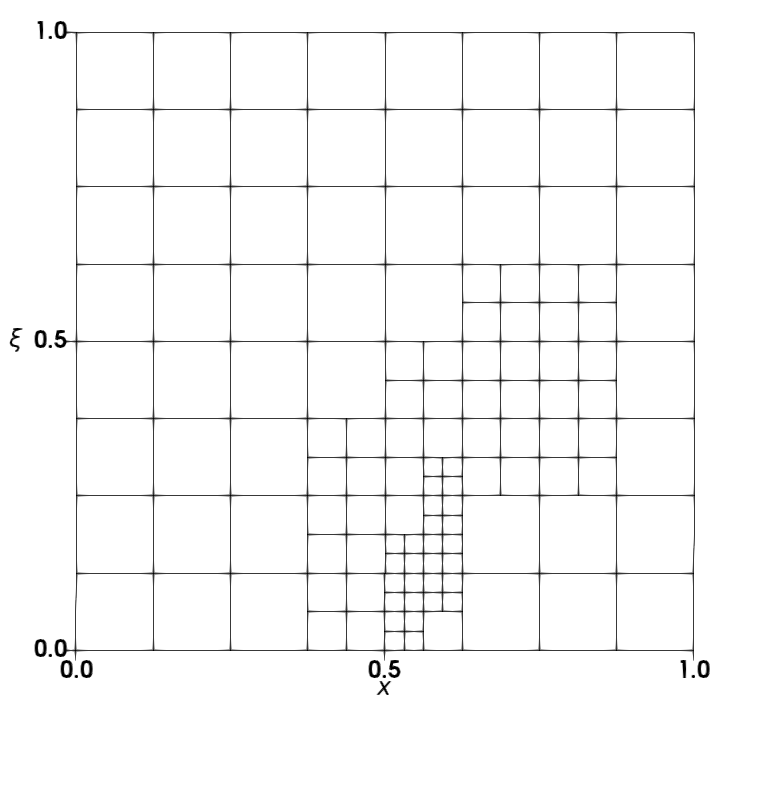}\hfill
  \includegraphics[width=.33\textwidth]{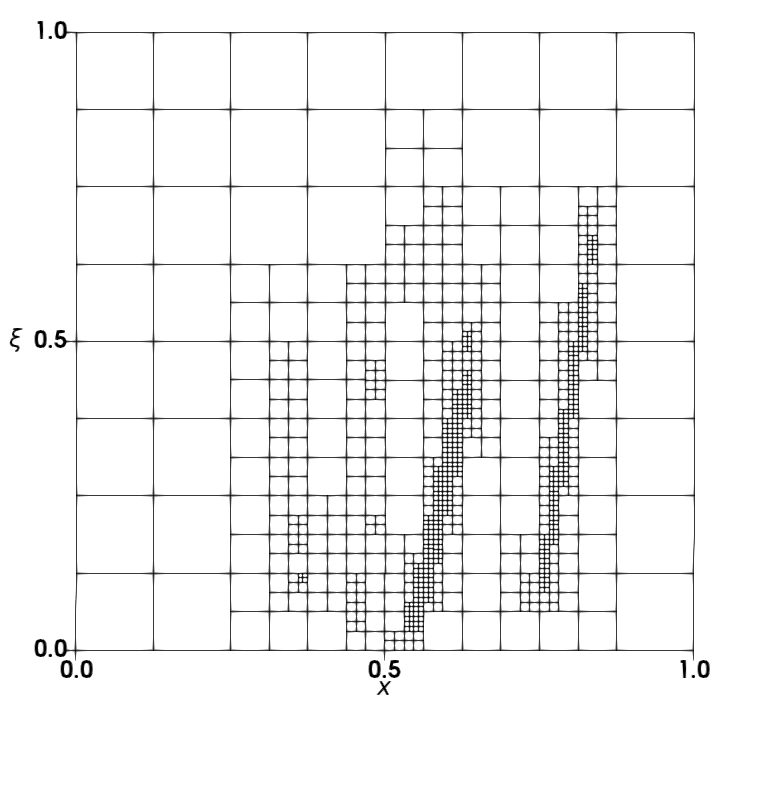}\hfill
  \includegraphics[width=.33\textwidth]{Figures/ex02_beta_2_5_grid.png}
  \caption{Adaptive grids with weighted thresholding for the Euler equations \eqref{eq:det_1d_euler} with uncertain initial data \eqref{eq:det_ic_euler} at time $t=0.2$ for the random variable $\xi\sim\mathcal B(2,5)$ and different maximum refinement levels $L$. From left to right: $L=2$; $L=4$; $L=6$.}
  \label{fig:2d_euler_weighted_diff_grids}
\end{figure}

\section{Conclusion}
\label{sec:Concl}
In the present work we investigate  the solution of conservation laws with uncertain initial data. For this purpose, we formulate the stochastic problem as a higher-dimensional deterministic problem. To both, the solution and its moments determined by averaging over the stochastic direction, we apply a novel multiresolution analysis that allows to investigate the interaction of the spatial scales with the stochastic scales. In particular, we identify the relevant scales in the spatial and stochastic variables in the solution  that affect the moments. Depending on the probability density corresponding to the random variable not all scales in the solution will affect the scales in the moments. This insight is used to design a new multiresolution based grid adaptation strategy for the  approximation of the deterministic problem. The adapted scheme shows higher efficiency and accuracy in the moments of the solution. Numerical results verify the analytical results.

In contrast to stochastic \textit{elliptic} or \textit{parabolic} PDEs, typically discontinuities  occur in the spatial solution of hyperbolic conservation laws. This reduces the regularity in the stochastic variables which can be seen in our numerical results. 
Therefore, we believe that it is important to understand the interaction of spatial and stochastic scales as we have done  here for arbitrary number of stochastic variables $m\in\N$. The analytical results are confirmed numerically for  $m=1$. Because of the curse of dimensionality this will not be feasible for higher dimension $m+d> 3$. For this purpose, the adaptation strategy has to be applied directly to the moments instead of the solution. The current investigations will be helpful in this regard.

\appendix
\section{Appendix}
\label{appendix:proof_lemmata}

\begin{proof}[Theorem \ref{thm:equiv_random_det}]~\\
  For simplicity we only prove the case of a positive probability density $p_\xi$. In the case of a compactly supported probability density $p_\xi$, we define $\Omega_\xi := \supp(p_\xi)$ and  extend the entropy solutions $\bar u\equiv 0$ and $u\equiv 0$ for all $\omega_\xi \in \R^m\setminus \Omega_\xi$. Then, we proceed analogously to the proof in the case of positive density.

  Let $\bar u $ be the unique entropy solution according to  Theorem \ref{thm:stochastic_entropy_solution}. For  $u$ defined by \eqref{eq:equiv_random_det} we have to verify the properties of Definition \ref{def:entropy_solution}.\\
  Let $\varphi\in C^1_0([0,T]\times{\R^{d + m}})$ be a test function. Then for $\omega_\xi\in\Omega_\xi$ the restriction
  \begin{align}
    \label{eq:test-fct-1}
    \bar\varphi(t,\bx;\omega_\xi) := \varphi(t,(\bx,\omega_\xi))\,(p_\xi(\omega_\xi))^{-1}
  \end{align}
  is a test function in $ C^1_0([0,T]\times\R^d)$ and the weak formulation \eqref{eq:stochastic_weak_formulation} holds for $\Prob_\xi$-a.s. $\omega_\xi\in\Omega_\xi$.
  Using the Radon-Nikodym theorem, integration  of \eqref{eq:stochastic_weak_formulation} over the induced probability space leads to
  \begin{align*}
      & \int_{\Omega_\xi}  \left(\int_0^\infty\int_{{\R^{d}}} \right. \bar u(t,\bx;\omega_\xi) \bar \varphi_t(t,\bx;\omega_\xi) + \sum_{j=1}^{d} \bof_j(\bar u(t,\bx;\omega_\xi))\pardev{\bx_j}\bar \varphi(t,\bx;\omega_\xi) \dif \bx\dif t \\
      & \hspace{20mm} \left. + \int_{{\R^{d}}}\bar{u}_0(\bx;\omega_\xi)\bar \varphi(0,\bx;\omega_\xi)\dif \bx \right)\dif\Prob_\xi(\omega_\xi)                                                                                                       \\
    = & \int_{\R^m} \left(\int_0^\infty\int_{\R^d}u(t,\bx,\bxi)\varphi_t(t,\bx,\bxi) + \sum_{j=1}^{d+m} \bof_j(u(t,\bx,\bxi))\pardev{\bx_j}\varphi(t,\bx,\bxi)\dif \bx\dif t \right.                                                         \\ & \hspace{20mm}\left.
    +  \int_{\R^d} u_0(\bx,\bxi)\varphi(0,\bx,\bxi)\dif \bx\right) \,(p_\xi(\bxi))^{-1}\,p_\xi(\bxi)\dif\bxi = 0
  \end{align*}
  for $u(t,\bx,\omega_\xi) := \bar u(t,\bx;\omega_\xi)$ for $\Prob_\xi$-a.s. $\omega_\xi\in\Omega_\xi$ and for a.e. $\bx\in\R^d$ using \eqref{eq:equiv_random_det_init}, \eqref{eq:test-fct-1} and \eqref{eq:equiv_random_det_flux}.
  Using Fubini's theorem we finally obtain the weak formulation \eqref{eq:weak_formulation}.
  Similarly, the entropy condition \eqref{eq:stoachastic_entropy_condition} for $\bar u$ implies the entropy condition \eqref{eq:entropy_condition} for $u$ using \eqref{eq:equiv_random_det_entropyflux}.
  Note that due to \eqref{eq:test-fct-1} the test function $\overline{\varphi}$ is non-negative if and only if $\varphi$ is non-negative.

  To verify that $u\in C_b([0,T],L_{\text{loc}}^1(\R^{d+m}))$ for $T>0$, we observe that
  \begin{align*}
    \| u \|_{C_b([0,T],L^1(\R^{d+m}))} \leq \| (p_\xi)^{-1} \|_{L^\infty(\Omega_\xi)}  \| \bar u \|_{C_b([0,T],L^1(\R^{d}))}
  \end{align*}
  using the Radon-Nikodym theorem.

  Conversely, we assume that $u$  is the entropy solution of the deterministic Cauchy problem \eqref{eq:det_problem} and define
  $  \bar u(t,x;\omega_\xi)  := u(t,(x,\omega_\xi))$ for $\Prob_\xi$-a.s. $\omega_\xi\in\Omega_\xi$ and for a.e. $x\in\R^d$.
  We now verify that  the weak formulation \eqref{eq:weak_formulation} implies the stochastic weak formulation \eqref{eq:stochastic_weak_formulation}. For this purpose, let be $\bar \varphi\in C^1_0([0,T]\times{\R^d})$ an arbitrary test function. Furthermore, for $\varepsilon > 0$ let be $J_\varepsilon:{\R^{m}}\rightarrow\R$ the rescaled mollifier $J_\varepsilon(\bxi):=\frac{1}{\varepsilon^m}J(\bxi/\varepsilon)$ with
  \begin{align*}
    J(\bxi) := \begin{cases}
                 c_m \exp\left(\frac{1}{|\bxi|^2-1}\right) & , |\bxi| < 1    \\
                 0                                         & , |\bxi| \geq 1
               \end{cases}
  \end{align*}
  and $c_m > 0$ chosen such that $\int_{{\R^{m}}}J(\bxi)\dif \bxi = 1$.
  By means of the rescaled mollifier we define for fixed $\bar\bxi\in \Omega_\xi$ and
  $\varepsilon > 0$
  chosen such that $B_\varepsilon(\bar\bxi)\subset \Omega_\xi$
  the smooth function
  \begin{align*}
    \varphi(t,x,\bxi) := \bar\varphi(t,x)  J_\varepsilon(\bar \bxi-\bxi)\, p_\xi(\bxi),\quad \bxi\in\R^m,
  \end{align*}
  where $B_\varepsilon(\bar\bxi)$ is an open ball with radius $\varepsilon >0$ and center $\bar\bxi\in \Omega_\xi$. Note that  the support of $\varphi$ is bounded because $\operatorname{supp} J_\varepsilon(\bar \bxi-\cdot) = B_\varepsilon(\bar \bxi)$ and $\operatorname{supp}\varphi\subset\operatorname{supp}\bar\varphi\times B_\varepsilon(\bar \bxi)\subset\operatorname{supp}\bar\varphi\times \Omega_\xi$. Therefore, it holds $\varphi \in C^1_0([0,T]\times{\R^{d + m}})$.
  Then we rewrite \eqref{eq:weak_formulation} applying Fubini's theorem and \eqref{eq:equiv_random_det_flux}
  \begin{alignat*}{2}
    \begin{split}
      0 = \int_{\R^m} \left(  \int_0^\infty\int_{{\R^d}} \right. & u(t,x,\bxi) \varphi_t(t, x, \bxi) + \sum_{j=1}^d \bof_j(u(t,x,\bxi))\pardev{x_j}\varphi(t,x,\bxi)\dif x\dif t \\
      & \left. + \int_{{\R^d}} u_0(x,\bxi)\varphi(0,x,\bxi)\dif x \right)\dif\bxi
    \end{split}
    \\
    \begin{split}
      = \int_{\R^m} \left(  \int_0^\infty\int_{{\R^d}} \right. & u(t,x,\bxi) \bar\varphi_t(t,x) + \sum_{j=1}^d \bof_j(u(t,x,\bxi))\pardev{x_j}\bar\varphi(t,x)\dif x\dif t \\
      & \left. + \int_{{\R^d}} u_0(x,\bxi)\bar\varphi(0,x)\dif x \right) J_\varepsilon(\bar \bxi-\bxi) p_\xi(\bxi)\dif\bxi.
    \end{split}
  \end{alignat*}
  Since $\varphi$ has compact support we introduce the weighted residual
  \begin{align*}
    R(\bxi) :=
    p_\xi(\bxi)  \int_0^\infty\int_{{\R^d}} & u(t,x,\bxi) \bar\varphi_t(t,x) + \sum_{j=1}^d \bof_j(u(t,x,\bxi))\pardev{x_j}\bar\varphi(t,x)\dif x\dif t \\
                                            & + \int_{{\R^d}} u_0(x,\bxi)\bar\varphi(0,x)\dif x.
  \end{align*}
  With the convolution $ R_\varepsilon( \bar \bxi ) := (J_\varepsilon\ast R)(\bar\bxi)$ it holds $R_\varepsilon(\bar\bxi)\rightarrow R(\bar\bxi)$, $\varepsilon\to 0$, for a.e. $\bar\bxi\in \Omega_\xi$ leading to $R(\bar\bxi) = 0$ for a.e. $\bar\bxi\in \Omega_\xi$ and therefore $R(\bxi) = 0$ for a.e. $\bxi\in\R^m$.
  Integrating the absolute value of the weighted residual over $\R^m$ we obtain for $\bxi=\omega_\xi$:
  \begin{align*}
    \begin{split}
      \int_{\Omega_\xi} \left|  \int_0^\infty\int_{{\R^d}} \right. & u(t,x,\omega_\xi) \bar\varphi_t(t,x) + \sum_{j=1}^d \bof_j(u(t,x,\omega_\xi))\pardev{x_j}\bar\varphi(t,x)\dif x\dif t \\
      & \left. + \int_{{\R^d}} u_0(x,\omega_\xi)\bar\varphi(0,x)\dif x \right|  \dif\Prob_\xi(\omega_\xi) = 0.
    \end{split}
  \end{align*}
  Since the Lebesgue measure is $\sigma$-finite and $\Prob_\xi$ is a finite measure, the null sets of
  the probability measure $\Prob_\xi$ and the Lebesgue measure are consistent and
  therefore the weak formulation holds for $\Prob_\xi$-a.e. $\omega_\xi\in\Omega_\xi$.

  To verify that the entropy condition \eqref{eq:entropy_condition} implies the stochastic entropy condition \eqref{eq:stoachastic_entropy_condition}, we may proceed analogously.

  It remains to verify that $\bar u(\cdot,\cdot;\omega_\xi) \in C_b([0,T],L^1(\R^{d}))$ for $T>0$ and for $\Prob_\xi$-a.s. $\omega_\xi\in\Omega_\xi$. Using the Radon-Nikodym theorem we observe that
  \begin{align*}
    \max_{t\in[0,T]} \| \bar u(t,\cdot;\omega_\xi) \|_{L^1(\R^d)}  \leq
    \|  p_\xi \|_{L^\infty([\omega_\xi-\varepsilon,\omega_\xi+\varepsilon])} \| u \|_{C_b([0,T],L^1(\R^{d}\times [\omega_\xi-\varepsilon,\omega_\xi+\varepsilon]))}
  \end{align*}
  for arbitrary $\varepsilon>0$
  which completes the proof.

  \qed
\end{proof}

\begin{proof}[Lemma \ref{la:estimates-expectation}]~\\
  By Jensen's inequality with $\Phi(x) = |x|^q$ for $q\in[1,\infty)$, $x\in\R$, ($\Phi$ convex and non-negative) and Fubini  we obtain
  \begin{align*}
    \Vert \E[u^k] \Vert^q_{L^q(\Omega_1)} & =
    \int_{\Omega_1} \left| \int_{\Omega_2} u^k(\bx_1,\bx_2) p_\xi(\bx_2) \dif\bx_2\right|^q \dif\bx_1
    \le \int_{\Omega_1}  \int_{\Omega_2} \left|u^k(\bx_1,\bx_2) p_\xi(\bx_2)\right|^q \dif\bx_2 \dif\bx_1 \\
                                          & \le \Vert p_\xi \Vert^q_{L^\infty(\Omega_2)}
    \int_{\Omega_1}  \int_{\Omega_2} \left|u^k(\bx_1,\bx_2) \right|^q \dif\bx_2 \dif\bx_1  =
    \Vert p_\xi \Vert^q_{L^\infty(\Omega_2)}
    \Vert u^k \Vert^q_{L^q(\Omega)} .
  \end{align*}
  This estimate implies \eqref{eq:estimate-expectation-1}.

  In case of $q=1$ we proceed as follows
  \begin{align*}
    \Vert \E[u^k] \Vert_{L^1(\Omega_1)} & =
    \int_{\Omega_1} \left| \int_{\Omega_2} u^k(\bx_1,\bx_2) p_\xi(\bx_2) \dif\bx_2\right| \dif\bx_1
    \le \int_{\Omega_1}  \int_{\Omega_2} \left|u^k(\bx_1,\bx_2) p_\xi(\bx_2)\right|\dif\bx_2 \dif\bx_1 \\
                                        & =
    \int_{\Omega_2}  \int_{\Omega_1} \left|u^k(\bx_1,\bx_2) \right|  \dif\bx_1 p_\xi(\bx_2)\dif\bx_2
    =
    \int_{\Omega_2}  \Vert u^k(\cdot,\bx_2) \Vert_{L^1(\Omega_1)}  p_\xi(\bx_2)\dif\bx_2               \\
                                        & =
    \E[\Vert u^k \Vert_{L^1(\Omega_1)} ]  .
  \end{align*}
  Applying Fubini and using the assumption on $p_\xi$ we conclude
  \begin{align*}
    \E[\Vert u^k \Vert_{L^1(\Omega_1)} ] & =
    \int_{\Omega_2}  \int_{\Omega_1} \left|u^k(\bx_1,\bx_2) \right|  \dif\bx_1 p_\xi(\bx_2)\dif\bx_2 \\
                                         & \le
    \Vert p_\xi \Vert_{L^\infty(\Omega_2)}
    \int_{\Omega_1}  \int_{\Omega_2} \left|u(\bx_1,\bx_2) \right|^k \dif\bx_2 \dif\bx_1
    =
    \Vert p_\xi \Vert_{L^\infty(\Omega_2)}
    \Vert u \Vert^k_{L^k(\Omega)},
  \end{align*}
  i.e., \eqref{eq:estimate-expectation-2} holds.
  To estimate $\E[u^k]$ in the $L^\infty(\Omega_1)$-norm we estimate analogously to the case $q=1$:
  \begin{align*}
    \Vert \E[u^k] \Vert_{L^\infty(\Omega_1)} & =
    \esssup_{\bx_1\in\Omega_1} \left\{  \left| \int_{\Omega_2} u^k(\bx_1,\bx_2) p_\xi(\bx_2) \dif\bx_2\right|  \right\}
    \\
                                             & \le \esssup_{(\bx_1,\bx_2)\in\Omega_1\times \Omega_2} \{ |u^k(\bx_1,\bx_2) | \}  \int_{\Omega_2}  p_\xi(\bx_2)\dif\bx_2 =
    \Vert u^k \Vert_{L^\infty(\Omega)} \le \Vert u \Vert^k_{L^\infty(\Omega)} ,
  \end{align*}
  where we use that $\Vert p_\xi \Vert_{L^1(\Omega_2)}=1$ holds. This proves \eqref{eq:estimate-expectation-3}.

  To verify \eqref{eq:estimate-expectation-1b} we proceed in analogy to the proof of \eqref{eq:estimate-expectation-1} and obtain for $q\in[1,\infty)$
  \begin{align*}
    \Vert \E^k[u] \Vert^q_{L^q(\Omega_1)} & =
    \int_{\Omega_1} \left| \int_{\Omega_2} u(\bx_1,\bx_2) p_\xi(\bx_2) \dif\bx_2 \right|^{k q} \dif\bx_1
    \le
    \int_{\Omega_1}  \int_{\Omega_2} |u(\bx_1,\bx_2) p_\xi(\bx_2) |^{k q} \dif\bx_2  \dif\bx_1 \\
                                          & \le
    \Vert p_\xi \Vert^{k q}_{L^\infty(\Omega_2)}
    \int_{\Omega_1}  \int_{\Omega_2} |u(\bx_1,\bx_2) |^{k q}\dif\bx_2  \dif\bx_1
    =
    \Vert p_\xi \Vert^{k q}_{L^\infty(\Omega_2)} \Vert u^k \Vert^q_{L^q(\Omega)} .
  \end{align*}

  For the case $q=\infty$, using $\|p_\xi\|_{L^1(\Omega_2)}=1$ yields
  \begin{align*}
    \Vert \E^k[u]\Vert_{L^\infty(\Omega)} & = \esssup_{\bx_1\in\Omega_1}\left\{\left|\int_{\Omega_2}u(\bx_1,\bx_2)p_\xi(\bx_2)\dif x_2\right|^k\right\}
    \leq \esssup_{\bx_1\in\Omega_1}\left\{\left(\|u(\bx_1,\cdot)\|_{L^\infty(\Omega_2)}\|p_\xi\|_{L^1(\Omega_2)}\right)^k\right\}                       \\
                                          & = \esssup_{\bx_1\in\Omega_1}\left\{\|u(\bx_1,\cdot)\|_{L^\infty(\Omega_2)}^k\right\}
    \leq \|u\|^k_{L^\infty(\Omega)}.
  \end{align*}
  \qed
\end{proof}

\begin{proof}[Lemma \ref{la:error-differences-expectation-moments}]~\\
  First, we note that  the relation
  \begin{align*}
    a^n-b^n =(a-b) \sum_{k=0}^{n-1} a^{n-1-k} b^k
  \end{align*}
  yields
  \begin{align}
    \label{eq:binom-3}
    |a^n-b^n| \le n\, \max\{|a|,|b|\}^{n-1} \,|a-b| .
  \end{align}
  By the definition of the expectation we deduce
  \begin{align}
    | \E[u^k](\bx_1) - \E[v^k](\bx_1) | & \le
    \int_{\Omega_2} |u^k(\bx_1,\bx_2) - v^k(\bx_1,\bx_2) | p_\xi(\bx_2) \dif\bx_2 \nonumber                                                                 \\
                                        & \le
    \int_{\Omega_2}  k\,(\max\{ | u(\bx_1,\bx_2) |, | v(\bx_1,\bx_2) | \} )^{k-1} |u(\bx_1,\bx_2) - v(\bx_1,\bx_2) |  p_\xi(\bx_2)  \dif\bx_2
    \label{eq:help}
    \\
                                        & \le k\,( \esssup_{\bx\in\Omega}\{ (\max\{ | u(\bx) |, | v(\bx) | \}  \} )^{k-1}
    \int_{\Omega_2} |u(\bx_1,\bx_2) - v(\bx_1,\bx_2) | p_\xi(\bx_2)  \dif\bx_2                                                                    \nonumber \\
                                        & \le k\, (M(u,v))^{k-1}\, \E[ |u-v| ](\bx_1) .\nonumber
  \end{align}
  For $q\in[1,\infty)$ integration then yields the inequality \eqref{eq:error-expectation-k-1}:
  \begin{align*}
    \Vert \E[u^k] - \E[v^k] \Vert^q_{L^q(\Omega_1)}
     & \le
    ( k\, (M(u,v))^{k-1})^q\,\int_{\Omega_1} (\E[ |u-v| ](\bx_1))^q \dif\bx_1 \\
     & =
    ( k\, (M(u,v))^{k-1})^q\,\int_{\Omega_1}| \E[ |u-v| ](\bx_1) |^q  \dif\bx_1 =
    ( k\, (M(u,v))^{k-1})^q\,\Vert  \E[ |u-v| ] \Vert^q_{L^q(\Omega_1)} .
  \end{align*}
  For $q=\infty$
  we take the $\esssup_{\bx_1\in\Omega_1}$ in \eqref{eq:help}.
  To verify the second inequality \eqref{eq:error-expectation-k-2} we first observe by \eqref{eq:binom-3}
  \begin{align*}
    | \E^k[u](\bx_1) - \E^k[v](\bx_1) | & \le
    k\,(\max\{ | \E[u](\bx_1) |, | \E[v](\bx_1) | \} )^{k-1}  | \E[u](\bx_1) - \E[v](\bx_1) |                                                \\
                                        & \le
    k\,(\max\{ \Vert \E[u] \Vert_{L^\infty(\Omega_1)} , \Vert \E[v] \Vert_{L^\infty(\Omega_1)} | \} )^{k-1}  | \E[u](\bx_1) - \E[v](\bx_1) | \\
                                        & = k\, (M_\E(u,v))^{k-1}  | \E[u-v](\bx_1)  |
    \le k\, (M_\E(u,v))^{k-1}   \E[|u-v|](\bx_1) .
  \end{align*}
  From this we conclude by integration for $q\in[1,\infty)$
  \begin{align*}
    \Vert \E^k[u] - \E^k[v] \Vert_{L^q(\Omega_1)} \le
    k\, (M_\E(u,v))^{k-1} \Vert \E[u-v] \Vert_{L^q(\Omega_1)}\le
    k\, (M_\E(u,v))^{k-1} \Vert \E[|u-v|] \Vert_{L^q(\Omega_1)} .
  \end{align*}
  Again, for $q=\infty$ we replace the integration over $\Omega_1$ by $\esssup_{\bx_1\in\Omega_1}$ in the above inequality.

  Finally, we note that by \eqref{eq:estimate-expectation-3} it holds
  \begin{align*}
    M_\E(u,v) \le M(u,v) .
  \end{align*}
  \qed
\end{proof}

\begin{acknowledgement}
  The authors appreciate the reviewer's valuable comments and suggestions, which have helped to significantly improve the manuscript.
\end{acknowledgement}

\bibliographystyle{spmpsci}      
\bibliography{main}   

\section*{Declarations}
\subsection*{Funding}
The authors thank the Deutsche Forschungsgemeinschaft (DFG, German Research Foundation) for the financial support through 320021702/GRK2326,  333849990/IRTG-2379, CRC1481, HE5386/18-1,19-2,22-1,23-1, ERS SFDdM035 and under Germany’s Excellence Strategy EXC-2023 Internet of Production 390621612 and under the Excellence Strategy of the Federal Government and the Länder. Support through the EU project DATAHYKING is also acknowledged.

\subsection*{Conflicts of interest/Competing interests}
There are no conflicts of interest.
\subsection*{Availability of data and material}
Data will be made available on reasonable request.
\subsection*{Code availability}
Code will not be made available.

\end{document}